\documentclass[a4paper]{amsart}
\usepackage{amsfonts}
\usepackage[colorlinks=true,citecolor=blue]{hyperref} %
\usepackage{mathptmx}
\usepackage{eucal}
\usepackage{graphicx}
\usepackage{mathrsfs}
\usepackage{amssymb}
\usepackage{amsmath}
\usepackage{amsthm}
\usepackage{enumerate}
\usepackage{verbatim}
\newtheorem{thm}{Theorem}[section]
\newtheorem{cor}[thm]{Corollary}
\newtheorem{lem}[thm]{Lemma}
\newtheorem{prop}[thm]{Proposition}

\newtheorem{ques}[thm]{Question}

\newtheorem*{thma}{Theorem A}
\newtheorem*{thmb}{Theorem B}
\newtheorem*{corc}{Corollary C}
\newtheorem*{thmd}{Theorem D}
\theoremstyle{definition}
\newtheorem{defn}[thm]{Definition}
\newtheorem{exam}[thm]{Example}
\newtheorem{rem}[thm]{Remark}
\numberwithin{equation}{section}

\newcommand{\seq}[1]{\langle #1 \rangle}
\newcommand{\E}{\mathcal{E}}

\newcommand{\N}{\mathbb{N}}
\newcommand{\Od}{\mathcal{O}}

\newcommand{\Z}{\mathbb{Z}}
\newcommand{\Lom}{\mathcal{L}}
\newcommand{\eps}{\varepsilon}
\newcommand{\htop}{h_{\mathrm{top}}}

\newcommand{\Orb}{\mathrm{Orb}}
\newcommand{\dbl}{d_{\mathrm{BL}}}
\DeclareMathOperator{\supp}{supp}
\DeclareMathOperator{\diam}{diam}
\DeclareMathOperator{\dist}{dist}
\DeclareMathOperator{\BL}{BL}
\DeclareMathOperator{\CR}{CR}
\DeclareMathOperator{\Per}{Per}

\begin{document}
\title[Invariant measures and shadowing]{Properties of invariant measures in dynamical systems with the shadowing property}

\author[J. Li]{Jian Li}
\date{\today}
\address[J.~Li]{Department of Mathematics, Shantou University, Shantou, Guangdong 515063, P.R. China}
\email{lijian09@mail.ustc.edu.cn}

\author[P. Oprocha]{Piotr Oprocha}
\address[P. Oprocha]{
	AGH University of Science and Technology,
	Faculty of Applied Mathematics,
	al. Mickiewicza 30,
	30-059 Krak\'ow,
	Poland
	 -- and --
National Supercomputing Centre IT4Innovations, Division of the University of Ostrava,
Institute for Research and Applications of Fuzzy Modeling,
30. dubna 22, 70103 Ostrava,
Czech Republic	}
\email{oprocha@agh.edu.pl}

\begin{abstract}
For dynamical systems with the shadowing property,
we provide a method of approximation of invariant measures by ergodic measures
supported on odometers and their almost 1-1 extensions.
For a topologically transitive system with the shadowing property,
we show that ergodic measures supported on odometers are dense in the space of invariant measures,
and then ergodic measures are generic in the space of invariant measures.
We also show that for every $c\geq 0$ and $\eps>0$ the collection of ergodic measures
(supported on almost 1-1 extensions of odometers) with entropy between $c$ and $c + \eps$
is dense in the space of invariant measures with entropy at least $c$.
Moreover, if in addition the entropy function is upper semi-continuous,
then for every $c\geq 0$ ergodic measures with entropy $c$
are generic in the space of invariant measures with entropy at least $c$.
\end{abstract}
\keywords{The shadowing property, odometers, invariant measures, entropy approximation}
\subjclass[2010]{37B40,37B05, 37C50}
\maketitle
 
\section{Introduction}
The concepts of the specification and shadowing properties were born during studies
on topological and measure-theoretic properties of Axiom A diffeomorphisms
(see \cite{BowSpec} and \cite{BowPOTP} by Rufus Bowen,
who was motivated by some earlier works of Anosov and Sinai).
Later it turned out that there are many very strong connections between specification properties
and the structure of the space of invariant measures.
For example in \cite{Sig1,Sig2}, Sigmund showed that
if a dynamical system has the periodic specification property,
then: the set of measures supported on periodic points is dense in the space of invariant measures;
the set of ergodic measures, the set of non-atomic measures, the set of measures positive on all open sets,
and the set of measures vanishing on all proper closed invariant subsets are complements
of sets of first category in the space of invariant measures;
the set of strongly mixing measures is a set of first category in the space of invariant measures.

A classical result of Bowen (see \cite[Proposition 23.20]{DGS}) states that
if a dynamical system with the shadowing property is expansive and topologically mixing
then it has the periodic specification property.
It is worth mentioning here that expansiveness of dynamics allows to conclude
the uniqueness of some approximate trajectories and the existence of periodic points in the above result.
Therefore it is interesting to expect some connections between
the shadowing property (without expansiveness) and properties of the space of invariant measures.

In general, it may happen that a dynamical system with the shadowing property
does not have any periodic point, however
Moothathu proved in \cite{M11} that the collection of uniformly recurrent points is dense in
the non-wandering set, and later Moothathu and Oprocha proved in \cite{MO13}
that the collection of regularly recurrent points is dense in the non-wandering set.
They also asked in \cite{MO13} whether there always exists a point whose orbit closure is an odometer.
Here we give a positive answer to this question.
In fact, we show that if a dynamical system has the shadowing property,
then the collection of points whose orbit closures are odometers is dense
in the non-wandering set.
If in addition the dynamical system is transitive,
then we can show that collection of ergodic measures which are supported on odometers is dense
in the space of invariant measures.
Our first main result is the following.

\begin{thma}\label{thm:A}
Suppose that a dynamical system $(X,T)$ is transitive and has the shadowing property.
Then the collection of ergodic measures which are supported on odometers
is dense in the space of invariant measures of $(X,T)$.
In particular ergodic measures are generic in the space of invariant measures.
\end{thma}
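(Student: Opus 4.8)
The plan is to show that the set $E_{\mathrm{od}}$ of ergodic measures supported on odometers is dense in the space $M_T(X)$ of $T$-invariant Borel probability measures, with the weak$^*$ topology. Since $M_T(X)$ is a Choquet simplex whose extreme points are exactly the ergodic measures, every invariant measure is a weak$^*$ limit of finite convex combinations $\sum_{i=1}^{k}\lambda_i\nu_i$ of ergodic measures $\nu_i$ with rational weights $\lambda_i$. Hence it is enough to fix such a combination $\mu=\sum_{i=1}^{k}\lambda_i\nu_i$, finitely many continuous test functions $\varphi_1,\dots,\varphi_m$, and $\eps>0$, and to produce a point $x$ whose orbit closure $\overline{\Orb(x)}$ is an odometer and whose unique invariant measure $\mu_x$ satisfies $|\int\varphi_j\,d\mu_x-\int\varphi_j\,d\mu|<\eps$ for every $j$.

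First I would replace each ergodic component by an orbit segment. Writing $\lambda_i=m_i/M$ with $m_i,M\in\N$, Birkhoff's ergodic theorem gives, for each $i$, a point $y_i$ and a length $L$ (common to all $i$, taken large) for which the empirical measure $\frac1{L}\sum_{t=0}^{L-1}\delta_{T^t y_i}$ is $\tfrac{\eps}{3}$-close to $\nu_i$ on the test functions. Concatenating these segments with multiplicities $m_i$ yields a finite string that weighs $\nu_i$ proportionally to $\lambda_i$, so its uniform empirical measure is $\tfrac{\eps}{3}$-close to $\mu$.

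Next I would turn this string into a periodic pseudo-orbit and then into an odometer. The string is an honest $\delta$-pseudo-orbit inside each segment, but not across the junctions between segments. This is where transitivity enters: a transitive system is chain transitive, so for any prescribed $\delta$ I can insert short $\delta$-chains bridging the end of one segment to the start of the next (and the last segment back to the first, closing everything into a periodic pseudo-orbit $P_1$ of some period $q_1$). Taking $L$ large compared with the bounded bridge lengths keeps the uniform measure of $P_1$ within $\tfrac{\eps}{2}$ of $\mu$. I would then build a refining sequence of periodic $\delta_n$-pseudo-orbits $P_n$ with periods $q_1\mid q_2\mid\cdots$ and $\delta_n\to 0$, where $P_{n+1}$ essentially repeats the coarse pattern of $P_n$ (so the uniform measure is preserved up to $o(1)$) while realizing genuinely finer detail; applying the shadowing property to the limiting structure produces a point $x$ that is regularly recurrent with period structure $\{q_n\}$, so $\overline{\Orb(x)}$ is the odometer $\varprojlim\Z/q_n\Z$, and its unique invariant measure $\mu_x$ inherits the estimate $|\int\varphi_j\,d\mu_x-\int\varphi_j\,d\mu|<\eps$. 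This construction is precisely the measure-controlled refinement of the "odometer points are dense in the non-wandering set" argument recalled in the introduction.

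The main obstacle is this last step: simultaneously (i) forcing the divisibility $q_n\mid q_{n+1}$ and the nested almost-periodic structure that makes $\overline{\Orb(x)}$ an odometer rather than a larger orbit closure, and (ii) propagating the measure estimate through infinitely many refinements without drift. Each refinement must copy the previous period's statistics closely enough that the accumulated error stays below $\eps$, while still being a bona fide $\delta_{n+1}$-pseudo-orbit; reconciling these two demands is the technical heart of the argument. Finally, the ``in particular'' clause is immediate: the ergodic measures always form a $G_\delta$ subset of $M_T(X)$, and since they contain the dense family $E_{\mathrm{od}}$ they are a dense $G_\delta$, hence residual.
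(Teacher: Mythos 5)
Your overall strategy coincides with the paper's: approximate $\mu$ by a finite convex combination of ergodic measures, replace each ergodic component by a long orbit segment of a generic point, close the concatenation into a periodic pseudo-orbit using chain transitivity (transitivity plus compactness does give the uniform bound on bridge lengths, as you assume), trace it by shadowing, and then refine to produce a point whose orbit closure is an odometer. However, the step you yourself flag as ``the technical heart'' is genuinely missing, in two ways. First, you never exhibit the mechanism that produces the next refinement $P_{n+1}$: why should there exist a $\delta_{n+1}$-pseudo-orbit of period $q_{n+1}\in q_n\N$ that follows the pattern of $P_n$? The paper extracts this from a minimal-set argument (Lemma~\ref{lem21}): the set $Y$ of points $\eps$-tracing the periodic pseudo-orbit of period $q_n$ is nonempty, closed and $T^{q_n}$-invariant; choosing a minimal set $D\subset Y$ for $T^{q_n}$, any $x'\in D$ is uniformly recurrent under $T^{q_n}$, hence satisfies $d(T^{kq_n}(x'),x')<\xi$ for some $k$, and this actual point (not merely a pseudo-orbit) is the input for the next stage. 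Without some such device the induction never gets off the ground.

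Second --- and this is an outright error rather than an omission --- your inference ``regularly recurrent with period structure $\{q_n\}$, so $\overline{\Orb(x)}$ is the odometer $\varprojlim \Z/q_n\Z$'' is false in general. By Theorem~\ref{thm:odometer}, regular recurrence only yields that the orbit closure is an \emph{almost 1-1 extension} of an odometer; Toeplitz subshifts are the standard examples, and these can have positive entropy and can even fail to be uniquely ergodic, so their invariant measures are in general not supported on odometers and your approximation claim would collapse at the last step. What forces the orbit closure to be an honest odometer in the paper is \emph{equicontinuity} of the limit point: the construction in Lemma~\ref{lem:odometer} guarantees $\diam \overline{\Orb(z,T^{n_k})}\leq 4\eps_k\to 0$, i.e.\ at each scale the entire $T^{n_k}$-orbit closure of $z$ is trapped in a ball whose radius is controlled by the summable tracing errors. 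That diameter estimate --- not regular recurrence alone --- is what must be propagated through the refinements, and it is exactly what your sketch omits; it also resolves the measure-drift issue you worry about, since the limit point stays $\eps$-close to the original periodic pattern for all time. (Your final remark on the ``in particular'' clause is fine: ergodic measures always form a $G_\delta$ in $M_T(X)$, so density implies residuality, which is the paper's argument as well.)
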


It is shown in~\cite{KLO} that
if the collection of ergodic measures is dense in the space of invariant measures
then the dynamical system must be transitive.
So transitivity is a necessary assumption in Theorem~A.

Using Theorem A, we can show that
if a dynamical system $(X,T)$ is transitive with $X$ being infinite and has the shadowing property
then the space of invariant measure has many properties in common with the case of
dynamical systems with the specification property.
For example the set of non-atomic measures and the set of measures positive on all open sets
are complements of sets of first category in the space of invariant measures;
the set of strongly mixing measures is a set of first category in the space of invariant measures.
A direct consequence of Theorem A is that the collection of ergodic measures with zero entropy
is dense in the space of invariant measures.
If in addition the entropy function (with respect to invariant measures) is upper semi-continuous,
then ergodic measures with zero entropy are generic in the space of invariant measures.
In \cite{LO13}, we proved that weak mixing together with the
shadowing property imply the specification property with a special kind of regularity in
tracing (a weaker version of periodic specification property).
Using ideas in the proof of Theorem~A, we can show that
the orbit closure of the tracing point is an odometer, answering the Question~1 in~\cite{LO13}.

Another important consequence of the specification property is that,
if the entropy function is upper semi-continuous  then
every invariant measure $\mu$ can be presented as a weak limit of ergodic measures $\mu_n$
and additionally entropies converge,
that is $\lim_{n\to\infty}\mu_n=\mu$ and $\lim_{n\to\infty} h_{\mu_n}(T)=h_\mu(T)$,
e.g. see \cite[Theorem~B]{EKW94}.
It is well known that if a dynamical system is expansive
then the entropy function is upper semi-continuous.
Unfortunately, it many cases it happens that the entropy function is not automatically upper semi-continuous,
hence many authors use some forms of expansiveness to ensure
upper semi-continuity of entropy function and obtain the above convergence, e.g. see \cite{Ruelle}
or weaker versions of expansiveness such as
$h$-expansiveness in \cite{B72} or asymptotically $h$-expansiveness in \cite{M73}.
Note also that if the entropy function is upper semi-continuous
then the system admits an ergodic invariant measure with maximal entropy.

Recently, many authors were interested in weakening specification property
in such a way that the entropy approximation is still possible.
An important example of this type is the paper by Pfister and Sulivan \cite{PS}
who introduced a very weak version of specification property and proved that
it suffices to show results on entropy analogous to these mentioned above
(in particular $\lim_{n\to\infty} h_{\mu_n}(T)\geq h_\mu(T)$).
As we will show, in the case of dynamical systems with the shadowing property
it is always possible to obtain the convergence of entropy, even in the case that
there does not exist any measure of maximal entropy (the entropy function is not upper semi-continuous).
Strictly speaking, the second main result of the paper is the following.
\begin{thmb}
Suppose that a dynamical system $(X,T)$ is transitive and has the shadowing property.
Then for every invariant measure $\mu$ on $(X,T)$ and every $0\leq c\leq h_\mu(T)$ 
there exists a sequence of ergodic measures  $(\mu_n)_{n=1}^\infty$ of $(X,T)$ supported
on almost 1-1 extensions of odometers such that
$\lim_{n\to \infty}\mu_n=\mu$ and $\lim_{n\to \infty}h_{\mu_n}(T)=c$.
\end{thmb}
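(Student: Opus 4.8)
The plan is to deduce Theorem~B from a single approximation statement: for every weak* neighbourhood $U$ of $\mu$ and every $\eps>0$ there is an ergodic measure $\nu$, supported on an almost 1-1 extension of an odometer, with $\nu\in U$ and $|h_\nu(T)-c|<\eps$; a diagonal choice of shrinking neighbourhoods together with $\eps\to 0$ then yields the required sequence $(\mu_n)$. Since the case $c=0$ is already covered by Theorem~A (odometers carry zero measure-theoretic entropy), I would concentrate on producing positive entropy in a controlled way. The starting point is the Katok/Brin--Katok description of entropy: because $c\le h_\mu(T)$, for a suitable scale $\delta>0$ and all large $n$ there is a family $W$ of length-$n$ orbit segments which are $\mu$-\emph{typical} (each empirical measure $\frac1n\sum_{i=0}^{n-1}\delta_{T^ix}$ lies in $U$) and pairwise $(\delta,n)$-separated, with $\frac1n\log|W|$ as close to $c$ as we wish (for non-ergodic $\mu$ one assembles such a $W$ from finitely many ergodic components of its ergodic decomposition); passing to a subfamily lets us pin this exponential growth rate to $c$ up to $\eps$. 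This reduces the problem to a combinatorial construction realised through shadowing.

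Next I would build the odometer skeleton. Choose a sequence of periods $p_1\mid p_2\mid\cdots$ with $n\mid p_1$, defining the odometer $G=\varprojlim \Z/p_k\Z$, and designate two kinds of positions along $\Z$: a set of \emph{marker} positions, of small density $1-\theta$, reserved to be filled periodically by one fixed word from $W$, and the complementary \emph{free} positions, of density $\theta$ close to $1$, to be filled by arbitrary words from $W$. Concatenating words in this pattern produces a family of sequences whose orbit closure is a Toeplitz-type system $\Sigma$; the periodic markers force $\Sigma$ to be an almost 1-1 extension of $G$, and its entropy is approximately $\theta\cdot\frac1n\log|W|\approx c$. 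By prescribing the frequencies with which the words of $W$ appear I would arrange $\Sigma$ to be uniquely ergodic, so that its unique invariant measure has entropy equal to $\htop(\Sigma)$ and, because almost all positions carry $\mu$-typical words, lies in $U$.

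I would then transport this picture into $(X,T)$ by shadowing. Replacing each word $w\in W$ by a genuine orbit segment that realises it turns every sequence built above into a $\gamma$-pseudo-orbit, and the shadowing property supplies a tracing point; the assignment from sequences to tracing points is injective up to scale $\delta$, so it transfers both the almost 1-1 extension structure over $G$ and the entropy estimate to the subsystem $Y=\overline{\Orb(z,T)}$. The resulting $Y$ is an almost 1-1 extension of the odometer $G$, its unique ergodic measure $\nu$ satisfies $\nu\in U$, and $h_\nu(T)=\htop(Y)$ is within $\eps$ of $c$, as required.

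The main obstacle is the simultaneous control of the three quantities -- the weak* distance to $\mu$, the value of the entropy (both its lower and its upper bound), and the almost 1-1 extension of odometer structure -- through one self-consistent choice of the parameters $n,\delta,\gamma,\theta$ and $(p_k)$. The entropy lower bound is the easy direction, coming from the exponentially many distinct tracing points; the upper bound is delicate precisely because we do \emph{not} assume the entropy function to be upper semi-continuous, so I must verify by direct counting of $(\delta,n)$-names that neither the shadowing coding nor the free concatenation creates entropy beyond the intended $\theta\cdot\frac1n\log|W|$. Balancing the competing demands on $\theta$ -- close to $1$ for small entropy loss and good weak* approximation, yet with enough markers to pin down the odometer and to guarantee unique ergodicity -- is where the real work lies.
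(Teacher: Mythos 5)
Your overall architecture --- separated sets of $\mu$-typical orbit segments at exponential rate $c$ (via the ergodic decomposition and a Katok/Pfister--Sullivan type count), a Toeplitz-style marker pattern to impose an odometer skeleton, and shadowing to transfer the picture into $(X,T)$ --- matches the paper's strategy (Lemma~\ref{lem:mes:ent:approx:2} and Lemma~\ref{lem:sep}, then Lemmas~\ref{lem:2} and~\ref{lem:3}). The genuine gap lies exactly at the step you dispose of in one sentence: that the tracing point $z$ ``transfers both the almost 1-1 extension structure over $G$ and the entropy estimate'' to $Y=\overline{\Orb(z,T)}$. With a \emph{single} application of shadowing at a fixed accuracy, none of this follows. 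Without expansivity the tracing point is not unique and need not be regularly recurrent; $Y$ need not be minimal; $Y$ need not be uniquely ergodic even when $\Sigma$ is (unique ergodicity does not lift through factor maps); and even if one builds a coding $Y\to\Sigma$ and composes it with $\Sigma\to G$, almost 1-1-ness of $\Sigma\to G$ says nothing about the fibers of $Y\to G$, which contain the uncontrolled fibers of $Y\to\Sigma$. So the very property demanded by Theorem~B --- that $\nu$ be supported on an almost 1-1 extension of an odometer --- is not established. (A smaller, fixable omission: for non-ergodic $\mu$ the words of $W$ must themselves be concatenations across the ergodic components, glued by pseudo-orbits with uniformly bounded connecting times; this needs the pigeon-hole argument on a finite cover of $\supp(\mu)$ that the paper carries out.)

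The same single-scale problem defeats your entropy upper bound, which you rightly identify as the delicate direction. Counting $(\delta,n)$-names of tracing points controls $s_m(\xi)$ only for $\xi$ coarser than the tracing accuracy; below that scale $Y$ can a priori carry extra entropy, and you have neither expansivity nor upper semi-continuity of the entropy function to exclude it. This is precisely why the paper replaces one-shot tracing by an infinite nested induction (Lemma~\ref{lem:2} feeding the recursion in Lemma~\ref{lem:3}): at stage $k$ the ``switching region'' has diameter $<\xi_k$ and the tracing error is $\eta_k$, with $\eta_k,\xi_k\to 0$ geometrically while the separation constants $\eps_k$ stay bounded below, and minimal Toeplitz subshifts of near-full entropy are used inside each stage to keep the branching rate at least $(1-\xi_k)$ per step. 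The limit point $z$ is then genuinely regularly recurrent, so $\overline{\Orb(z,T)}$ is minimal and, by Theorem~\ref{thm:odometer}, an almost 1-1 extension of an odometer; minimality (not unique ergodicity) guarantees that the ergodic measure $\nu$ supplied by the variational principle lives on this extension; and the multi-scale control yields the two-sided pinch $(1-\eta)\log(s)/n\le \htop(\overline{\Orb(z,T)},T)\le\log(s)/n$ valid at every scale. To repair your argument you would have to iterate your marker construction at all scales simultaneously with shrinking tracing errors --- which is, in effect, the paper's proof.
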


A direct consequence of Theorem B is the following corollary.

\begin{corc}
Suppose that a dynamical system $(X,T)$ is transitive and has the shadowing property.
The following conditions hold:
\begin{enumerate}
\item for every $0\leq c<\htop(T)$ and $\eps>0$ the collection of ergodic measures,
supported on almost 1-1 extensions of odometers, with entropy between $c$ and $c + \eps$
is dense in the space of invariant measures with entropy at least $c$.
\item if the entropy function is upper semi-continuous,
then for every $0\leq c<\htop(T)$ ergodic measures with entropy $c$
are generic in the space of invariant measures with entropy at least $c$.
\end{enumerate}
\end{corc}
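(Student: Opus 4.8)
The plan is to deduce both parts from Theorem~B by soft arguments, writing $M_T(X)$ for the space of invariant measures with the weak$^*$ topology and $M_T^{\ge c}(X)=\{\mu\in M_T(X):h_\mu(T)\ge c\}$ for the subspace appearing in the statement.

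For part~(1) I would fix $\mu\in M_T^{\ge c}(X)$ and $\eps>0$ and seek ergodic measures, supported on almost 1-1 extensions of odometers and with entropy in $[c,c+\eps]$, that converge to $\mu$. Applying Theorem~B to $\mu$ with target $c$ is not quite enough: it only yields entropies tending to $c$, which may approach from below and thus fall outside $[c,c+\eps]$. To repair this I would use the hypothesis $c<\htop(T)$: by the variational principle there is $\omega\in M_T(X)$ with $h_\omega(T)>c$, and since the entropy function is affine on $M_T(X)$ the convex combinations $\mu_t=(1-t)\mu+t\omega$ satisfy $h_{\mu_t}(T)=(1-t)h_\mu(T)+th_\omega(T)>c$ for all $t\in(0,1]$, while $\mu_t\to\mu$ as $t\to0^+$. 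For each small $t$ I would then apply Theorem~B to $\mu_t$ with target $c_t:=\min\{h_{\mu_t}(T),\,c+\eps/2\}$, which is admissible because $c<c_t\le h_{\mu_t}(T)$ and satisfies $c<c_t<c+\eps$; discarding finitely many terms of the resulting sequence leaves ergodic measures of the required type with entropy in $[c,c+\eps]$ and converging to $\mu_t$. A diagonal choice $t_k\downarrow0$ with sufficiently large indices finishes part~(1).

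For part~(2) I assume the entropy function is upper semi-continuous, so that $M_T^{\ge c}(X)$ is a closed, hence compact and Baire, subspace of $M_T(X)$. The key idea is to identify the desired set with a set of continuity points of $h$ rather than to construct ergodic measures of entropy \emph{exactly} $c$ by hand. Part~(1) shows that every $\mu\in M_T^{\ge c}(X)$ is a limit of measures in $M_T^{\ge c}(X)$ whose entropies tend to $c$; since every measure in $M_T^{\ge c}(X)$ has entropy at least $c$, the lower semi-continuous regularization of $h$ on $M_T^{\ge c}(X)$ is identically $c$. Because $h$ is upper semi-continuous, a point $\mu$ is a continuity point of $h|_{M_T^{\ge c}(X)}$ precisely when $h_\mu(T)=c$. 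As an upper semi-continuous function is of Baire class one, its continuity points form a residual set; hence $\{\,h=c\,\}$ is residual in $M_T^{\ge c}(X)$.

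To conclude, I would combine this with density of ergodic measures. The ergodic measures form a $G_\delta$ subset of $M_T(X)$ (the extreme points of the Choquet simplex $M_T(X)$), and by part~(1) they are dense in $M_T^{\ge c}(X)$, so they constitute a residual subset of it. The intersection of the two residual sets $\{\,h=c\,\}$ and the ergodic measures is again residual in the Baire space $M_T^{\ge c}(X)$ and consists exactly of ergodic measures with entropy $c$, giving genericity. I expect the only delicate point to be this last part: Theorem~B does not directly supply dense families of ergodic measures with entropy exactly $c$, so the argument has to trade the explicit construction for the Baire-category fact about continuity points of semi-continuous functions.
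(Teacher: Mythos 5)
Your proposal is correct. In part (1) you follow essentially the paper's route: the paper distinguishes the cases $h_\mu(T)>c$, where Theorem~B is applied directly with a target value in $(c,c+\eps)$, and $h_\mu(T)=c$, where one first mixes $\mu$ with a measure of entropy greater than $c$ (variational principle plus affinity of the entropy function) and only then applies Theorem~B; your uniform convex-combination-plus-diagonalization argument is the same idea without the case split. In part (2) your mechanism is genuinely different. The paper works directly with the sets $\mathcal{V}(c,\tfrac{1}{n})=\{\mu\colon c\le h_\mu(T)<c+\tfrac{1}{n}\}$: these are open in $\mathcal{V}=\{\mu\in M_T(X)\colon h_\mu(T)\ge c\}$ by upper semi-continuity and dense in $\mathcal{V}$ by part (1), so their intersection $\{\mu\colon h_\mu(T)=c\}$ is residual in $\mathcal{V}$. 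You instead characterize $\{\mu\colon h_\mu(T)=c\}$ as the set of continuity points of $h|_{\mathcal{V}}$ and invoke the theorem that a semicontinuous (hence Baire class one) function has a residual set of continuity points. Both are soft Baire-category arguments on the same compact space; the paper's is more elementary and self-contained, while yours needs the Baire-class-one fact and, in case $h$ attains the value $+\infty$ (which upper semi-continuity does not exclude), a harmless reparametrization such as $h\mapsto h/(1+h)$ before that theorem can be quoted. On the other hand, your final step is a genuine improvement in completeness: the statement asks for genericity of \emph{ergodic} measures of entropy $c$, while the paper's displayed argument only establishes residuality of the set of \emph{invariant} measures of entropy $c$ in $\mathcal{V}$; your explicit intersection with the set of ergodic measures --- a $G_\delta$ in $M_T(X)$ by \cite[Proposition~5.7]{DGS} and dense in $\mathcal{V}$ by part (1), hence residual in $\mathcal{V}$ --- supplies precisely the bookkeeping that the paper leaves implicit.
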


Note that the conclusion of Corollary C is much stronger than the condition (D) in \cite{IP84},
that is the graph of the restriction of the entropy function to ergodic measures
is dense in the graph.
In \cite[Theorem 1]{C15}, Comman gave several equivalent conditions to
the condition (D) and explained their applications in large deviation theory.
So if a dynamical system $(X,T)$ is transitive and has the shadowing property
then it has some consequences in large deviation theory.

The paper is organized as follows.
In Section~\ref{sec:2}, we give some preliminaries.
In Section~\ref{sec:3} we show the existence of odometers in dynamical systems with the shadowing property
and provide a method of approximation of invariant measures by ergodic measures supported on odometers.
Then we prove Theorem A and its consequences.
In Section~\ref{sec:almost11}, we provide a delicate method of approximation of invariant measures
by ergodic measures supported on almost 1-1 extensions of odometers with arbitrary lower entropy.
Theorem B is proved there. Section~\ref{sec:5} contains examples of systems with the shadowing property but without the specification property, highlighting
possible applications of previously mentioned results.
In Section~\ref{sec:6}, we provide the following example.

\begin{thmd}\label{thm:shad:mme}
	There exists a transitive dynamical system $(X,T)$ with the shadowing property such that
	$ h_\mu(T)<\htop(T)$ for every invariant measure $\mu\in M_T(X)$.
\end{thmd}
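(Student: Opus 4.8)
The plan is to produce an explicit example rather than to argue abstractly, and the natural source of such an example is an inverse limit of subshifts of finite type. Recall that a (two-sided) subshift has the shadowing property precisely when it is of finite type, and every transitive SFT carries a measure of maximal entropy; hence no subshift can witness Theorem~D. Passing to an inverse limit breaks expansiveness, and this is exactly what lets shadowing coexist with the failure of upper semi-continuity of the entropy function. So I would fix a sequence of transitive SFTs $(X_n,\sigma_n)$ together with factor maps (bonding maps) $\phi_n\colon X_{n+1}\to X_n$ commuting with the shifts, set $X=\varprojlim(X_n,\phi_n)$ with $T=\varprojlim\sigma_n$, and arrange the entropies $c_n:=\htop(\sigma_n)$ to increase strictly to a finite limit $h^\ast$.

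Transitivity and shadowing are the comparatively soft part. Each $X_n$ is transitive and, being an SFT, has the shadowing property; if the bonding maps are surjective then the inverse limit is again transitive, and if in addition the maps are chosen uniformly enough (so that $\eps$-pseudo-orbits at a fixed finite level can be simultaneously traced and the traces are compatible across levels), the inverse limit inherits the shadowing property. This is where I would spend the routine effort: realizing the bonding maps as block maps between the $X_n$ and checking the standard inverse-limit criterion for shadowing.

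The entropy computation is where the construction must be engineered. By the variational principle $\htop(T)=\sup_n c_n=h^\ast$, and for any $T$-invariant measure $\mu$ one has $h_\mu(T)=\lim_n h_{\mu_n}(\sigma_n)$, where $\mu_n=(\pi_n)_\ast\mu$ is the projection to $X_n$; the family $(\mu_n)$ is exactly a compatible one, $(\phi_n)_\ast\mu_{n+1}=\mu_n$. The key design requirement is that the bonding maps collapse the high-entropy part: I would build $X_{n+1}$ so that its entropy comes from ``fresh'' combinatorics that $\phi_n$ maps into a low-complexity region of $X_n$, guaranteeing a uniform entropy drop for near-maximal measures, say $h_{(\phi_n)_\ast\lambda}(\sigma_n)\le c_n-\gamma$ for a fixed $\gamma>0$ whenever $h_\lambda(\sigma_{n+1})$ is close to $c_{n+1}$. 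Then if some $\mu$ had $h_\mu(T)=h^\ast$, the increasing sequence $h_{\mu_n}(\sigma_n)$ would tend to $h^\ast$, forcing each $\mu_n$ to be near-maximal; but $\mu_n=(\phi_n)_\ast\mu_{n+1}$ is the pushforward of the near-maximal measure $\mu_{n+1}$, hence has entropy at most $c_n-\gamma$, a contradiction. Therefore $h_\mu(T)<h^\ast=\htop(T)$ for every $\mu\in M_T(X)$.

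The main obstacle is the simultaneous fulfilment of shadowing and the entropy gap, since these pull in opposite directions: shadowing favours a rigid, uniformly finite-type local structure across the levels, whereas the non-attainment of entropy requires that each bonding map genuinely destroy the maximal-entropy information passing from $X_{n+1}$ to $X_n$. The delicate point is thus the explicit combinatorial choice of the SFTs $X_n$ and the block maps $\phi_n$ realizing the uniform drop $\gamma$ for near-maximal measures while keeping the inverse system uniform enough to trace pseudo-orbits; verifying the entropy drop for \emph{all} measures close to maximal, and not merely for the measures of maximal entropy themselves, is the step I expect to require the most care.
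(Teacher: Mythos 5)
Your soft claims are essentially fine: an inverse limit of transitive two-sided SFTs along surjective equivariant bonding maps is transitive, it inherits the shadowing property (trace the projected pseudo-orbit at a single sufficiently deep level $N$, push the tracing point down through the bonding maps and lift it arbitrarily above level $N$; compatibility is automatic because pseudo-orbit points are threads), and the entropy bookkeeping $h_\mu(T)=\lim_n h_{\mu_n}(\sigma_n)$ and $\htop(T)=\lim_n c_n$ is standard. The genuine gap is in the one step you flagged as merely requiring care: the uniform entropy drop is not just hard to engineer, it is \emph{impossible}. By the Ledrappier--Walters relative variational principle, for any factor map $\phi_n\colon X_{n+1}\to X_n$ and any $\nu\in M_{\sigma_n}(X_n)$ one has $\sup\{h_\lambda(\sigma_{n+1})\colon (\phi_n)_*\lambda=\nu\}\ge h_\nu(\sigma_n)$. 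Apply this with $\nu=\nu_n$, the measure of maximal entropy of the SFT $X_n$ (it exists since SFTs are expansive), and use upper semi-continuity of entropy on the expansive system $X_{n+1}$ to attain the supremum: there is $\lambda\in M_{\sigma_{n+1}}(X_{n+1})$ with $(\phi_n)_*\lambda=\nu_n$ and $h_\lambda(\sigma_{n+1})\ge c_n$. Since you insist that $c_n\nearrow h^*<\infty$, we have $c_{n+1}-c_n\to 0$, so for all large $n$ this $\lambda$ satisfies $h_\lambda(\sigma_{n+1})\ge c_n>c_{n+1}-\eta$ for your fixed threshold $\eta$; it is therefore ``near-maximal'' in your sense, yet its pushforward is exactly $\nu_n$, whose entropy is $c_n$, not at most $c_n-\gamma$. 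Hence no choice of SFTs and block maps whatsoever can realize your key design requirement, and the contradiction argument has nothing to run on. Weakening the requirement does not rescue it: the admissible thresholds are forced to satisfy $\eta_{n+1}\le c_{n+1}-c_n\to 0$, and a thread of measures whose entropy deficits $c_n-h_{\mu_n}$ tend to zero more slowly than $\eta_n$ evades every such drop, so this mechanism cannot rule out a measure of maximal entropy on the inverse limit.

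It is instructive to compare with what the paper actually does, which is in a precise sense the opposite of an inverse-limit collapse. The paper builds a compactified countable-state Markov shift $\overline{\Gamma_G}$ over a strongly connected graph $G$ whose first-return loop counts are tuned so that $G$ is null recurrent with finite Gurevich entropy $\log 2$; Gurevich's theorem then yields the absence of a measure of maximal entropy, and the compactification only adds the zero-entropy point mass at $0^\infty$. Shadowing is proved by hand, by reducing pseudo-orbits to the SFT subgraphs $G_K$ (plus an auxiliary symbol standing for excursions along long loops) and performing explicit surgery on the traced orbit. There the near-maximal entropy is carried by measures spending most of their time on longer and longer loops, so it ``escapes to infinity'' in an increasing union of SFTs; this is exactly the phenomenon that survives compactification but that no sequence of entropy-collapsing factor maps between SFTs can imitate. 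If you want to salvage an inverse-limit route, you must abandon the uniform-drop mechanism; as it stands, the proposal's central step fails.
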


\section{Preliminaries}\label{sec:2}
In this section we provide main definitions and prove some auxiliary results which will be used later.

Throughout this paper, let $\N$, $\N_0$, $\Z$ and $\mathbb{R}$
denote the set of all positive integers, non-negative integers, integers and real numbers, respectively.
The cardinality of a set $A$ is denoted $|A|$.

\subsection{Topological dynamics}
By a \emph{(topological) dynamical system} we mean a pair
$(X,T)$, where $X$ is a compact metric space and $T\colon X\to X$ is a continuous map.
The metric of $X$ is usually denoted by $d$. If $|X|=1$ then we say that the dynamical system $(X,T)$ is \textit{trivial}.

For a point $x\in X$, the \emph{orbit} of $x$, denoted by  $\Orb(x,T)$,
is the set $\{T^nx\colon n\in\N_0\}$,
and the \emph{$\omega$-limit set} of $x$, denoted by $\omega(x,T)$,
is the set of limits points of the sequence $(T^n(x))_{n\in\N_0}$.

A subset $A$ of $X$ is \emph{$T$-invariant} (or simply \emph{invariant}) if $T(A)\subset A$.
If $A$ is a closed $T$-invariant subset of $X$, then $(A,T|_A)$ also is a dynamical system.
We will call it a \emph{subsystem} of $(X,T)$.
If there is no ambiguity, for simplicity we will write $T$ instead of $T|_A$.
A dynamical system $(X,T)$ is called \emph{minimal}
if it does not contain any non-empty proper subsystem.
It is easy to see that a dynamical system $(X,T)$ is minimal
if and only if $\overline{\Orb(x,T)}=X$ for every $x\in X$.
A dynamical system $(X,T)$ is \emph{transitive} if for any two nonempty open sets $U,V\subset X$
there is $n>0$ such that $T^n(U)\cap V\neq \emptyset$;
\emph{(topologically) weakly mixing} if the product system $(X\times X,T\times T)$ is transitive;
	\textit{(topologically) mixing} if for any two nonempty open sets $U,V\subset X$ there is $N>0$
	such that $T^n(U)\cap V\neq \emptyset$ for every $n\geq N$.

A point $x\in X$ is \emph{periodic} with the least period $n$, if
$n$ is the smallest positive integer satisfying $T^n(x) = x$;
\emph{regularly recurrent} if for every open neighborhood $U$ of $x$ there exists $k\in\N$
such that $T^{kn}(x)\in U$ for all $n\in\N_0$;
\emph{uniformly recurrent} if for every open neighborhood $U$ of $x$ there exists $N\in\N$
such that for every $n\in\N_0$ there is $k\in [n,n+N]$ such that $T^k(x)\in U$;
\emph{recurrent} if for every neighborhood $U$ of $x$ there exists $k\in\N$
such that $T^k(x)\in U$;
and \emph{non-wandering}
if for every neighborhood $U$ of $x$ there exist $k\in\N$ and $y\in U$ such that $T^k(y)\in U$.
It is easy to see that a point $x\in X$ is uniformly recurrent
if and only if $(\overline{\Orb(x,T)},T)$ is minimal, and it is
recurrent if and only if $(\overline{\Orb(x,T)},T)$ is transitive.
The set of all non-wandering points of $(X,T)$ is denoted as $\Omega(X,T)$.
Observe that $\Omega(X,T)$ is closed and $T$-invariant.
If $\Omega(X,T)=X$, the dynamical system  $(X,T)$ is said to be \emph{non-wandering}.

Directly from definitions we have the following relation between the above properties
$$
\text{periodic}\Rightarrow \text{regularly recurrent}\Rightarrow \text{uniformly recurrent}\Rightarrow \text{recurrent}\Rightarrow \text{nonwandering}
$$
and it is not hard to provide examples showing that none of these implications can be reversed.

Let $(X,T)$ and $(Y,S)$ be two dynamical systems.
If there is a continuous surjection $\pi\colon X\to Y$ with $\pi\circ T=S\circ \pi$,
we say that $\pi$ is a \emph{factor map},
the system $(Y,g)$ is a \emph{factor} of $(X, T)$ or $(X, T)$ is an \emph{extension} of $(Y, S)$.
We say that a factor map $\pi\colon X\to Y$ is \emph{almost 1-1}
if $\{x\in X\colon \pi^{-1}(\pi(x))=\{x\}\}$ is residual in $X$.

A dynamical system $(X, T)$ is \emph{equicontinuous} if for every $\eps>0$ there is $\delta>0$ with
the property that for every two points $x,y\in X$,
$d(x,y)<\delta$ implies $d(T^n(x),T^n(y))<\eps$ for all $n\in\N_0$.

A dynamical system $(X,T)$ is called an \emph{odometer}
if it is equicontinuous and there exists a regularly recurrent point $x\in X$
such that $\overline{\Orb(x,T)}=X$. Note that with this definition, a periodic orbit is also an odometer.
There are several equivalent definitions of odometers. Next we recall one of them. Let $\mathbf{s}=(s_n)_{n\in \N}$ be a nondecreasing
sequence of positive integers such that $s_n$ divides $s_{n+1}$. For each $n\geq 1$ define $\pi_n\colon \Z_{s_{n+1}}\to \Z_{s_n}$ by the natural formula $\pi_n(m)=m\; (\text{mod }s_n)$ and let $G_\mathbf{s}$ denote the following inverse limit
\[
G_\mathbf{s}=\varprojlim_n(\Z_{s_n}, \pi_n)=
\Bigl\{x\in\prod_{i=1}^\infty \Z_{s_n}: x_{n}=\pi_n(x_{n+1})\Bigr \},
\]
where each $\Z_{s_n}$ is given the discrete topology, and on $\prod_{i=1}^\infty \Z_{s_n}$ we have the Tychonoff product topology, hence $G_\mathbf{s}$ is a compact metrizable space. On $G_\mathbf{s}$ we define a natural map $T_{\mathbf{s}}\colon G_\mathbf{s}\to G_\mathbf{s}$ by $T_{\mathbf{s}}(x)_n=x_n+1 \; (\text{mod }s_n)$.

The following theorem binds together notions of odometer and regularly recurrent point (e.g. see \cite[Theorem~5.1]{D05}).
\begin{thm}\label{thm:odometer}
A minimal dynamical system $(X,T)$ is an almost 1-1 extension of some $(G_{\mathbf{s}},T_{\mathbf{s}})$ if and only if there exists a regularly recurrent point $x\in X$ such that $\overline{\Orb(x,T)}=X$.
\end{thm}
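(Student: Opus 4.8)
The plan is to prove the two implications separately. The key preliminary remark is that a regularly recurrent point is uniformly recurrent, so if $\overline{\Orb(x,T)}=X$ then $(X,T)$ is automatically minimal; thus in both directions we work inside a minimal system.

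\emph{(An odometer factor suffices.)} Suppose $\pi\colon X\to G_{\mathbf{s}}$ is an almost 1-1 factor map. First I would record that every point of an odometer is regularly recurrent: given $g\in G_{\mathbf{s}}$, the map $T_{\mathbf{s}}^{s_m}$ fixes the first $m$ coordinates (since $s_m\equiv 0$ modulo each $s_i$ with $i\le m$), so $k=s_m$ gives $T_{\mathbf{s}}^{kn}(g)$ in the cylinder determined by the first $m$ coordinates of $g$ for all $n$. Since $\pi$ is almost 1-1, I can pick $x\in X$ with $\pi^{-1}(\pi(x))=\{x\}$ and set $y=\pi(x)$. The point is that a singleton fiber is continuously isolated: for any neighborhood $U$ of $x$ there is a neighborhood $W$ of $y$ with $\pi^{-1}(W)\subset U$, for otherwise compactness would produce a point $z\neq x$ with $\pi(z)=y$. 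Combining this with regular recurrence of $y$, choosing $k$ with $T_{\mathbf{s}}^{kn}(y)\in W$ for all $n$ yields $T^{kn}(x)\in\pi^{-1}(W)\subset U$ for all $n$, so $x$ is regularly recurrent, and minimality gives $\overline{\Orb(x,T)}=X$.

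\emph{(Constructing the odometer factor.)} For the converse, let $x$ be regularly recurrent with $\overline{\Orb(x,T)}=X$, and fix a decreasing sequence of open neighborhoods $U_m$ of $x$ with $\bigcap_m\overline{U_m}=\{x\}$. Regular recurrence gives, for each $m$, an arithmetic progression of return times into $U_m$; passing to least common multiples I can arrange $s_1\mid s_2\mid\cdots$ with $T^{s_m n}(x)\in U_m$ for all $n\geq 0$. For $r\in\Z_{s_m}$ define the closed sets
\[
C^{(m)}_r=\overline{\{T^{r+s_m i}(x):i\geq 0\}}.
\]
I would verify three structural facts: the family $\{C^{(m)}_r\}_r$ covers $X$ (its union contains the dense orbit), it is equivariant, $T(C^{(m)}_r)\subset C^{(m)}_{r+1}$, and it is consistent across levels, $C^{(m+1)}_r\subset C^{(m)}_{r\bmod s_m}$ (because $s_m\mid s_{m+1}$). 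Crucially, $T^{s_m i}(x)\in U_m$ forces $C^{(m)}_0\subset\overline{U_m}$; as these sets are nested and all contain $x$, this yields $\bigcap_m C^{(m)}_0=\{x\}$. These data give a map to the odometer: on the orbit one sets $\pi(T^j x)=(j\bmod s_m)_m=T_{\mathbf{s}}^{j}(0)$, and the level-$m$ coordinate of a general $z$ records which $C^{(m)}_r$ contains $z$. Consistency makes the levels compatible so that $\pi$ lands in $G_{\mathbf{s}}$, equivariance makes it intertwine $T$ with $T_{\mathbf{s}}$, and once continuity is known surjectivity is automatic since the image is closed and contains the dense orbit of $0$. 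To see $\pi$ is almost 1-1 I would show $\pi^{-1}(g)=\bigcap_m C^{(m)}_{g_m}$ is a singleton for a residual set of $g$, using $\bigcap_m C^{(m)}_0=\{x\}$ as the base case and spreading it by the dense $T_{\mathbf{s}}$-orbit together with a Baire category argument.

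The main obstacle is exactly this last interface between the combinatorial coding and the topology: the sets $C^{(m)}_r$ for distinct $r$ need not be disjoint, so the naive assignment $T^j x\mapsto(j\bmod s_m)_m$ is not manifestly continuous and $\pi$ is a priori multivalued on the overlaps. I expect most of the work to consist in showing that these overlaps are negligible, i.e. nowhere dense, so that $z$ lies in a unique $C^{(m)}_r$ at each level on a residual set; this is precisely what simultaneously secures the well-definedness and continuity of $\pi$ and the almost 1-1 conclusion, and the singleton identity $\bigcap_m C^{(m)}_0=\{x\}$ is the seed from which this control should propagate.
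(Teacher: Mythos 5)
First, a remark on the comparison itself: the paper does not prove this theorem — it quotes it from Downarowicz's survey \cite{D05}, so your attempt has to be judged against the standard argument there. Your first direction is complete and correct: upper semicontinuity of fibers at a singleton fiber, regular recurrence of every point of $(G_{\mathbf{s}},T_{\mathbf{s}})$, and equivariance together do produce a regularly recurrent point with dense orbit, exactly as in the standard proof.

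The converse direction, however, has a genuine gap, and the repair you propose would not work. You set up the right objects: the sets $C^{(m)}_r=\overline{\{T^{r+s_m i}(x):i\geq 0\}}$, their equivariance, the consistency across levels, and the seed identity $\bigcap_m C^{(m)}_0=\{x\}$. But your plan to show that the overlaps of the $C^{(m)}_r$ are merely \emph{nowhere dense} cannot close the argument: a factor map must be defined and continuous at \emph{every} point of $X$, so a coding that is single-valued only on a residual set is not a factor map, and no Baire-category control of the overlaps can fix this. The missing idea is that overlaps force \emph{equality}, not that they are negligible. Since $x$ is regularly recurrent, it is regularly (hence uniformly) recurrent also under $T^{s_m}$, so $C^{(m)}_0=\overline{\Orb(x,T^{s_m})}$ is a \emph{minimal} set for $T^{s_m}$, and $C^{(m)}_r=T^r(C^{(m)}_0)$ is again $T^{s_m}$-minimal because $T^r$ commutes with $T^{s_m}$. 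Two minimal sets either coincide or are disjoint; moreover the sequence $r\mapsto C^{(m)}_r$ is periodic with some least period $p_m\mid s_m$, so $X$ decomposes into the $p_m$ \emph{pairwise disjoint} closed (hence clopen) sets $C^{(m)}_0,\dotsc,C^{(m)}_{p_m-1}$, cyclically permuted by $T$. This is what makes the level-$m$ coding a genuine continuous equivariant map onto $\Z_{p_m}$ — note the alphabet is $\Z_{p_m}$, not $\Z_{s_m}$: when sets coincide (e.g.\ for a periodic orbit with $s_m$ chosen larger than the period, or a fixed point), your assignment $T^jx\mapsto (j\bmod s_m)_m$ is not well defined at all, which is a second point your sketch glosses over. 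With $p_m\mid p_{m+1}$ from refinement of the partitions, one gets a factor map $\pi\colon X\to G_{\mathbf{p}}=\varprojlim\Z_{p_m}$ with $\pi^{-1}(\pi(x))=\bigcap_m C^{(m)}_0=\{x\}$; since $T_{\mathbf{p}}$ is invertible and both systems are minimal, every fiber over the dense orbit of $\pi(x)$ is a singleton and each set $\{z\colon \diam\pi^{-1}(\pi(z))<1/n\}$ is open and dense, so the singleton-fiber set is residual. Thus the almost 1-1 property comes for free from the one singleton fiber — the Baire-type work you postponed to the end is not where the difficulty lies; the difficulty is the equal-or-disjoint dichotomy, which your proposal never establishes.
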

It was first proved in \cite{Paul} (see also \cite[Theorem~4.3]{D05}) that an almost 1-1 extension of a minimal equicontinuous system is its maximal equicontinuous factor.
But any equicontinuous system is
its own maximal equicontinuous factor, hence by Theorem~\ref{thm:odometer} each odometer is in fact conjugate to some $(G_\mathbf{s},T_{\mathbf{s}})$, and clearly each $(G_\mathbf{s},T_{\mathbf{s}})$ satisfies our definition of odometer. Then we can view each ``abstract'' odometer as $(G_\mathbf{s},T_{\mathbf{s}})$.
We refer the reader to the survey \cite{D05} by Downarowicz for more details on odometers and their extensions.

\subsection{Chain-recurrence and shadowing property}
An infinite sequence $(x_n)_{n=0}^\infty$ of points in $X$
is a \emph{$\delta$-pseudo-orbit} for a dynamical system $(X,T)$ if $d(T(x_n),x_{n+1})<\delta$ for each $n\in\N_0$.
We say that a dynamical system $(X,T)$ has the \emph{shadowing property}
if for every $\varepsilon>0$ there is a $\delta>0$
such that any $\delta$-pseudo-orbit $(x_n)_{n=0}^\infty$
can be \textit{$\eps$-traced} by a point $y\in X$,
that is $d(T^n(y),x_n)<\varepsilon$ for all $n\in \N_0$.

A \emph{$\delta$-chain} (of length $n$) between $x$ and $y$ is any sequence $(x_i)_{i=0}^{n}$
such that $d(T(x_i),x_{i+1})<\delta$ for each $i=0,\ldots,n-1$ and $x=x_0,y=x_{n}$.
Given two points $x,y\in X$, if for every $\delta>0$ there is a $\delta$-chain form $x$ to $y$
then we write $x\rightsquigarrow y$ and when $x\rightsquigarrow y$ and $y\rightsquigarrow x$
then we write $x\sim y$.
A point $x\in X$ is \emph{chain recurrent} if $x\sim x$ and the set of all such points is
denoted by $\CR(X,T)$. Note that the \emph{chain relation} $\sim$ is an equivalence relation on $\CR(X,T)$.
If for every $x,y\in X$ and every $\delta>0$ there exists a $\delta$-chain form $x$ to $y$,
that is $X=[x]_\sim$ for some $x\in X$,
then $(X,T)$ is called \emph{chain transitive}.
It is easy to see that if a dynamical system is transitive then it is also chain transitive,
and when the dynamical system $(X,T)$ has the shadowing property then the converse implication is true.

Obviously $\Omega(X,T)\subset \CR(X,T)$ and it is also well known that
when $(X,T)$ has the shadowing property then $\Omega(X,T)= \CR(X,T)=\CR(\CR(T),T)$.
We refer to \cite{AH} by Aoki and Hiraide for more detailed exposition on chain-recurrence
and the shadowing property.

We say that  a dynamical system $(X,T)$ satisfies the \emph{periodic specification property}
if for any $\eps > 0$ there exists $M > 0$ such that for any $k \geq 2$,
any $k$ points $x_1, x_1,\dotsc,x_k\in X$,
any non-negative integers $0\leq a_1\leq b_1<a_2\leq b_2<\dotsb<a_k\leq b_k$ with $a_i-b_{i-1}\geq M$
for each $i = 2,3,\dotsc, k$ and any integer $p\geq M+b_k-a_1$,
there exists a periodic point $z \in X$  with $T^p(z) = z$ and
$d( T^j (z), T^j (x_i )) <\eps$ for all $a_i\leq j\leq b_i$ and $1\leq i\leq k$.
We say that $(X,T)$ satisfies the \emph{specification property}
if the point $z$ from the definition of periodic specification property is not
requested to be periodic (hence no condition on $p$).

\subsection{Topological entropy}
For $\eps>0$ and $n\in\N$, a subset $S\subset X$ is called \emph{$(n,\eps)$-separated}
if for any $x\neq y\in S$ there exists $0\leq i<n$ with $d(T^i(x),T^i(y))>\eps$,
and \emph{$(n,\eps)$-spanning} if for any $x\in X$, there exists $y\in S$
such that $d(T^i(x),T^i(y))<\eps$ for $i=0,1,\dotsc,n-1$.
Define
\[s_n(\eps) = \sup\{|S|\colon S \text{ is } (n,\eps)\text{-separated}\},\]
and
\[r_n(\eps) = \inf \{|S|\colon S \text{ is } (n,\eps)\text{-spanning}\}.\]

The \emph{topological entropy} of $(X,T)$, denoted by $\htop(X,T)$, is
\[\htop(X,T) =\lim_{\eps\to 0}\limsup_{n\to\infty} \frac{1}{n}\log s_n(\eps) =
\lim_{\eps\to 0}\limsup_{n\to\infty} \frac{1}{n}\log r_n(\eps).\]
We refer to the monograph \cite{Walters}  by Walters for more information on topological entropy.

\subsection{Symbolic dynamics}
For a positive integer $k$,
the symbolic space $\Sigma_k^+$ over the alphabet $\{0,1,\dotsc,k-1\}$ is the collection of
all infinite sequence of symbols in $\{0,1,\dotsc,k-1\}$ indexed by the non-negative integers $\N_0$.
We write elements of $\Sigma_k^+$ as $x=x_0x_1x_2\dotsb$.
If the alphabet $\{0,1,\dotsc,k-1\}$ is endowed with the discrete topology and
$\Sigma_k^+=\{0,1,\dotsc,k-1\}^{\N_0}$  with the product topology,
then the symbolic space $\Sigma_k^+$ is homeomorphic to the standard Cantor ternary set.
The \emph{shift map} $\sigma$ on the symbolic space $\Sigma_k^+$ is defined by
\[\sigma:\Sigma_k^+\to\Sigma_k^+, \quad x_0x_1\dotsb\mapsto x_1x_2\dotsb.\]
It is clear that $\sigma$ is continuous and surjective.
The dynamical system  $(\Sigma_k^+,\sigma)$ is called the \emph{(one-sided) full shift}.
Any subsystem of the full shift is called a \emph{subshift}.

A \emph{word} of length $n$ is a finite sequence $w=w_0w_1\dotsc w_{n-1}$ of elements of
the alphabet $\{0,1,\dotsc,k-1\}$.
We say that a word $w=w_0w_1\dotsc w_{n-1}$ appears in $x=x_0x_1x_2\dotsb$
if there exists some $j\in\N_0$ such that $x_{j+i}=w_i$ for $i=0,1,\dotsc,n-1$.
For a subshift $C$, denote by $\mathcal{L}_n(C)$ is the collection
of words of length $n$ which appear in some point in $C$.
The \emph{language} of $C$, denote by $\mathcal{L}(X)$,
is the set $\bigcup_{n=1}^\infty \mathcal{L}_n(C)$.
The formula for topological entropy of a subshift $(C,\sigma)$ can be reduced to
\[
    \htop(C,\sigma)=\lim_{n\to\infty}\frac{1}{n}\log|\mathcal{L}_n(C)|.
\]
If $X\subset \Sigma_k^+$ is a subshift and there is a finite set of words $F$ such that
\[
    X=\{x\in \Sigma_k^+ : x_{[i,j]}\not\in F \text{ for all }0\leq i \leq j\}
\]
then we say that $X$ is a \emph{subshift of finite type}.
It was first proved by Walters \cite{Wal} that a subshift $(X,\sigma)$
has the shadowing property if and only it is a subshift of finite type.

Two-sided full shift $\Sigma_k=\{0,\ldots,k-1\}^\Z$ and all related objects are defined analogously.
We use monographs \cite{Kit,Kurka} as a standard reference for symbolic dynamics.

\subsection{The space of measures}

Let $X$ be a compact metric space and $\mathcal{B}$ be the $\sigma$-algebra of Borel subsets of $X$.
Let $M(X)$ denote the set of
all Borel probability measures on the measurable space $(X,\mathcal{B})$.
For a point $x\in X$, we denote the point mass at $x$ by $\delta_x$.
The \emph{support} of a measure $\mu \in M(X)$, denoted by $\supp(\mu)$,
is the smallest closed subset $C$ of $X$ such that $\mu(C)=1$.

Let $C(X,\mathbb{R})$ be the collection of all continuous real-valued functions on $X$ with
the supremum norm $\Vert \,\cdot\, \Vert_\infty$.
By the Riesz representation theorem, there is a one-to-one correspondence
between $M(X)$ and the collection of all
normalized continuous positive linear functionals on $C(X,\mathbb{R})$.
So we can imbed $M(X)$ into the dual space $C(X,\mathbb{R})^*$ of $C(X,\mathbb{R})$.
Under the weak$^*$-topology of $C(X,\mathbb{R})^*$, $M(X)$ is a compact metric space.
For a sequence $(\mu_n)_{n\in\N}$ and $\mu\in M(X)$,
we have that $\mu_n\to \mu$ in the weak$^*$-topology if and only if
$\int fd\mu_n\to \int fd\mu$ for every $f\in C(X,\mathbb{R})$.

Let $\BL(X,d)$ denote the set of all bounded real-valued function $f$ on $X$ which
are Lipschitz, i.e.,
\[\Vert f\Vert_L:=\sup_{x\neq y} \frac{|f(x)-f(y)|}{d(x,y)}<\infty.\]
Since $(X,d)$ is a compact metric space, $\BL(X,d)$ is dense in $C(X,\mathbb{R})$ (e.g. see \cite{D04}).
Let $\Vert f\Vert_{\BL}=\Vert f\Vert_\infty+\Vert f\Vert_L$ and
fix a countable dense sequence $(f_n)_{n\in\N}$ in $\{f\in \BL(X,d)\colon \Vert f\Vert_{BL}\leq 1\}$.
For for $\mu,\nu\in M(X)$, define
\begin{equation}\label{dbl:def}
\dbl(\mu,\nu)=\sum_{n=1}^\infty \frac{1}{2^n}\Bigr\vert\int f_nd\mu-\int f_n d\nu\Bigr\vert.
\end{equation}
Then $\dbl$ is a metric on $M(X)$ and
the topology induced by $d_{\BL}$ coincides with the weak$^*$-topology.

The following lemma is easy to be verified.
\begin{lem}\label{lem:measure-approx}
Let $(X,d)$ be a compact metric space and $\eps>0$.
\begin{enumerate}
  \item \label{enum:measure-approx-1}
  For a sequences $(x_i)_{i=0}^\infty$ of points in $X$ and two finite subsets $A,B$ of $\N_0$, then
\[
  \dbl\Bigl( \frac{1}{|A|}\sum_{i\in A} \delta_{x_i}, \frac{1}{|B|}\sum_{i\in B}\delta_{x_i}\Bigr)
  \leq \frac{|A|+|B|}{|A|\cdot|B|}|A\Delta B| + \frac{\bigl||A|-|B|\bigr|}{|A|\cdot|B|}|A\cap B|.
\]
  \item \label{enum:measure-approx-2}
  For two sequence $(x_i)_{i=0}^{m-1}$ and $(y_i)_{i=0}^{m-1}$ of points in $X$
    if $d(x_i,y_i)<\eps$ for $i=0,1,\dotsc,m-1$, then
\[
 \dbl\Bigl( \frac{1}{m}\sum_{i=0}^{m-1} \delta_{x_i},
        \frac{1}{m}\sum_{i=0}^{m-1} \delta_{y_i}\Bigr)<\eps.
\]
 \item\label{enum:measure-approx-3}
 If $\mu_i,\mu\in M(X)$ are such that $\dbl(\mu_i,\mu)<\eps$ for $i=1,\ldots, K$,
 then for any choice of $\alpha_i\in [0,1]$
 with $\sum_{i=1}^K\alpha_i=1$ we have
\[
    \dbl\Bigl(\sum_{i=1}^K \alpha_i \mu_i,\mu\Bigr)<\eps.
\]
\end{enumerate}
\end{lem}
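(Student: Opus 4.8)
The plan is to reduce all three claims to uniform estimates on the individual terms $\bigl|\int f_n\,d\mu-\int f_n\,d\nu\bigr|$ appearing in the definition \eqref{dbl:def} of $\dbl$. The crucial structural point is that the weights $2^{-n}$ sum to $1$, so once each such term is bounded by a common constant $c$, we immediately get $\dbl(\mu,\nu)\le c$; moreover each $f_n$ satisfies $\Vert f_n\Vert_\infty\le 1$ and $\Vert f_n\Vert_L\le 1$, since $\Vert f_n\Vert_{\BL}=\Vert f_n\Vert_\infty+\Vert f_n\Vert_L\le 1$. I would treat the parts in the order \eqref{enum:measure-approx-2}, \eqref{enum:measure-approx-3}, \eqref{enum:measure-approx-1}, because the first two are immediate and isolate the two features of the definition (the Lipschitz control and convexity) that make it well behaved.

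For part \eqref{enum:measure-approx-2} I would fix $n$ and use only the Lipschitz norm: since $|f_n(x_i)-f_n(y_i)|\le \Vert f_n\Vert_L\,d(x_i,y_i)\le d(x_i,y_i)<\eps$, averaging over $i=0,\dots,m-1$ gives $\bigl|\int f_n\,d\mu-\int f_n\,d\nu\bigr|<\eps$ for every $n$, whence $\dbl(\mu,\nu)<\eps$. For part \eqref{enum:measure-approx-3} I would use linearity of the integral together with the triangle inequality to obtain $\bigl|\int f_n\,d(\sum_i\alpha_i\mu_i)-\int f_n\,d\mu\bigr|\le \sum_i\alpha_i\bigl|\int f_n\,d\mu_i-\int f_n\,d\mu\bigr|$; multiplying by $2^{-n}$, summing over $n$, and interchanging the two (absolutely convergent) sums yields $\dbl\bigl(\sum_i\alpha_i\mu_i,\mu\bigr)\le\sum_i\alpha_i\,\dbl(\mu_i,\mu)<\eps$. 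This is simply convexity of the map $\nu\mapsto\dbl(\nu,\mu)$.

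Part \eqref{enum:measure-approx-1} is where the only genuine bookkeeping lies, and I expect it to be the main (if modest) obstacle. Writing $a=|A|$, $b=|B|$ and using only $\Vert f_n\Vert_\infty\le 1$, I would split each empirical average along the partitions $A=(A\cap B)\cup(A\setminus B)$ and $B=(A\cap B)\cup(B\setminus A)$, so that the difference $\frac1a\sum_{i\in A}f_n(x_i)-\frac1b\sum_{i\in B}f_n(x_i)$ rearranges to $\bigl(\frac1a-\frac1b\bigr)\sum_{i\in A\cap B}f_n(x_i)+\frac1a\sum_{i\in A\setminus B}f_n(x_i)-\frac1b\sum_{i\in B\setminus A}f_n(x_i)$. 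Bounding each summand by $1$ gives $\frac{|a-b|}{ab}|A\cap B|+\frac1a|A\setminus B|+\frac1b|B\setminus A|$, and since $\frac1a,\frac1b\le\frac1a+\frac1b=\frac{a+b}{ab}$ while $|A\setminus B|+|B\setminus A|=|A\Delta B|$, the last two terms are at most $\frac{a+b}{ab}|A\Delta B|$. This is precisely the claimed bound on every term $\bigl|\int f_n\,d\mu-\int f_n\,d\nu\bigr|$, and summing against the weights $2^{-n}$ completes the argument. The only point requiring care is to keep the intersection separated from the two one-sided differences, so that the coefficient $\frac1a-\frac1b$, rather than $\frac1a+\frac1b$, is correctly attached to $|A\cap B|$.
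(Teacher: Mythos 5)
Your proof is correct. The paper offers no proof of this lemma (it is stated as ``easy to be verified''), and your argument --- bounding each term $\bigl|\int f_n\,d\mu-\int f_n\,d\nu\bigr|$ uniformly via $\Vert f_n\Vert_{\BL}\le 1$ and $\sum_{n}2^{-n}=1$, using the Lipschitz bound for part (2), convexity for part (3), and the decomposition $A=(A\cap B)\cup(A\setminus B)$, $B=(A\cap B)\cup(B\setminus A)$ with the coefficient $\frac{1}{|A|}-\frac{1}{|B|}$ attached to $|A\cap B|$ for part (1) --- is exactly the intended direct verification.
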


\subsection{Invariant measures and measure-theoretic entropy}
Let $(X,T)$ be a dynamical system.
We say that a measure $\mu\in M(X)$ is \emph{$T$-invariant} if
$\mu(T^{-1}(B))=\mu(B)$ for every $B\in\mathcal{B}$
and denote by $M_T(X)$ the set of all $T$-invariant measures in $M(X)$.
Recall that $M_T(X)$ is a convex compact subset of $M(X)$
and by the celebrated Krylov-Bogolyubov theorem, $M_T(X)$ is always not empty.

For $x\in X$ and $n\in\N_0$, define the $n$-th \emph{empirical measure} of $x$ as
\begin{equation*}
\mathcal{E}_{n}(x):=\frac{1}{n}\sum_{j=0}^{n-1}\delta_{T^{j}(x)}
\end{equation*}
and observe that any limit points of the sequence $(\mathcal{E}_n(x))$ is $T$-invariant.

An invariant measure $\mu\in M_T(X)$ is ergodic
if the only Borel sets $B$ with $T^{-1}(B)=B$ satisfy $\mu(B)=0$ or $\mu(B)=1$.
Let $M^{erg}_T(X)$ denote the set of ergodic measures in $M_T(X)$.
Then $M^{erg}_T(X)$ coincides with the set of extreme points of $M_T(X)$.
By the Choquet representation theorem, for each $\mu \in M_T(X)$
there exists a Borel probability measure $\tau$ on $M^{erg}_T(X)$
such that (see Remark (2) in page 153 of \cite{Walters})
\begin{equation}
\mu=\int_{M^{erg}_T(X)} \nu d\tau(\nu). \label{eq:erg:decomp}
\end{equation}
We call \eqref{eq:erg:decomp} the \emph{ergodic decomposition} of $\mu$.
The entropy of an invariant measure $\mu\in M_T(X)$ is denoted by $h_\mu(T)$.
One of the most useful tools to deal with entropy is  variational principle:
\[\htop(X,T)=\sup_{\mu\in M_T(X)} h_\mu(T)=\sup_{\mu\in M^{erg}_T(X)}h_\mu(T).\]

A dynamical system $(X,T)$ is called \emph{uniquely ergodic} if $M_T(X)$ is a singleton,
and \emph{strictly ergodic} if it is minimal and uniquely ergodic.

\begin{rem}\label{rem:strictly:erg}
If $(G_\mathbf{s},T_\mathbf{s})$ is an odometer, then we may view $T_{\mathbf{s}}$ as a rotation in a compact metrizable group.
Therefore each odometer is strictly ergodic, with the Haar measure as the only invariant measure (e.g. see \cite[Theorem~6.20]{Walters})
\end{rem}

For a given $\mu\in M_T(X)$, we say that a point $x\in X$ is  \emph{generic} for $\mu$
if $\E_n(x)\to \mu$ as $n\to\infty$.
It may happen in practice that an invariant measure  does not have any generic point,
however by the Birkhoff ergodic theorem, if $\mu$ is ergodic
then $\mu$-almost every point is generic.
If a dynamical system $(X,T)$ is uniquely ergodic,
then obviously every point in $X$ is generic for the unique invariant measure.

We will often use the following fact about invariant measures supported on the
orbit closure of a point.
\begin{lem}\label{lem:measure-in-the-orbit}
Let $(X,T)$ be a dynamical system,
let $x\in X$ and $\mu\in M_T^{erg}\bigl(\overline{\Orb(x,T)}\bigr)$.
Then for every $\eps>0$ and $p,N\in\N$,
there exist $n,m\in\N$ with $p|n$, $p|m$ and $m\geq N$ such that
$\dbl(\E_m(T^n(x)),\mu)<\eps$.
\end{lem}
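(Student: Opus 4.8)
The plan is to approximate $\mu$ by the empirical measure along a long orbit segment of a $\mu$-generic point $y$, and then to realize essentially the same segment along the orbit of $x$ itself, while respecting the arithmetic constraints $p\mid n$ and $p\mid m$.

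First, since $\mu$ is ergodic and supported on $Y:=\overline{\Orb(x,T)}$, the Birkhoff ergodic theorem guarantees that $\mu$-almost every point of $Y$ is generic for $\mu$; in particular there exists a generic point $y\in Y$, i.e.\ $\E_k(y)\to\mu$ as $k\to\infty$. Every forward shift $T^r(y)$ is again generic: applying Lemma~\ref{lem:measure-approx}\eqref{enum:measure-approx-1} to the single sequence $(T^i(y))_{i\geq 0}$ with the index sets $\{r,\dots,r+k-1\}$ and $\{0,\dots,k-1\}$ shows that $\dbl(\E_k(T^r(y)),\E_k(y))=O(r/k)\to 0$. Hence $y,T(y),\dots,T^{p-1}(y)$ are all generic for $\mu$, and since these are finitely many points I can fix a single $m=k\in\N$ with $p\mid k$, $k\geq N$, and $k$ large enough that $\dbl(\E_k(T^r(y)),\mu)<\eps/2$ for every $r\in\{0,1,\dots,p-1\}$ simultaneously.

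Next I transfer this segment to the orbit of $x$. By uniform continuity of the finitely many maps $T^0,T^1,\dots,T^{k+p-1}$ on the compact space $X$, choose $\delta'>0$ so that $d(z,y)<\delta'$ forces $d(T^j(z),T^j(y))<\eps/2$ for all $0\leq j\leq k+p-1$. Because $y\in\overline{\Orb(x,T)}$, there is $n_0\in\N_0$ with $d(T^{n_0}(x),y)<\delta'$. Here comes the device that produces the divisibility: among the $p$ consecutive indices $n_0+1,\dots,n_0+p$ exactly one is divisible by $p$, so I may pick $r\in\{1,\dots,p\}$ with $p\mid(n_0+r)$ and set $n:=n_0+r\in\N$. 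For $i=0,1,\dots,k-1$ we then have $1\leq r+i\leq k+p-1$, whence $d(T^{n+i}(x),T^{r+i}(y))=d(T^{n_0+(r+i)}(x),T^{r+i}(y))<\eps/2$.

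Finally I assemble the estimate. The previous line says the two length-$k$ sequences $(T^{n+i}(x))_{i=0}^{k-1}$ and $(T^{r+i}(y))_{i=0}^{k-1}$ are pointwise $\eps/2$-close, so Lemma~\ref{lem:measure-approx}\eqref{enum:measure-approx-2} gives $\dbl(\E_k(T^n(x)),\E_k(T^r(y)))<\eps/2$; combining with $\dbl(\E_k(T^r(y)),\mu)<\eps/2$ through the triangle inequality yields $\dbl(\E_m(T^n(x)),\mu)<\eps$ with $m=k$, where by construction $p\mid n$, $p\mid m$ and $m\geq N$. The only genuinely delicate point is reconciling the trace with the congruence $p\mid n$: a naive appeal to density of the orbit of $x$ supplies an index $n_0$ in an uncontrolled residue class, and the resolution is to absorb the corrective shift $r\in\{1,\dots,p\}$ into the base point, which is harmless precisely because all $p$ shifts $T^r(y)$ remain $\mu$-generic and can be controlled by one common segment length $k$.
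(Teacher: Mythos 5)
Your proof is correct and is essentially the paper's argument: both take a $\mu$-generic point in $\overline{\Orb(x,T)}$, copy a long piece of its orbit along the orbit of $x$ by density, and pay an $O(p/k)$ price via Lemma~\ref{lem:measure-approx} for the divisibility constraints --- the paper rounds $n_1$ and $m_1$ to multiples of $p$ \emph{after} the transfer and controls the resulting change with Lemma~\ref{lem:measure-approx}\eqref{enum:measure-approx-1}, whereas you choose the length $m=k$ divisible by $p$ in advance and absorb the start-index correction into the generic point, using that its shifts are still generic (same lemma); this is the same estimate in different bookkeeping. One slip to repair: you establish $\dbl(\E_k(T^r(y)),\mu)<\eps/2$ only for $r\in\{0,\dots,p-1\}$ but later pick $r\in\{1,\dots,p\}$ (to force $n\geq 1$), so the case $r=p$ is not covered; since $T^p(y)$ is generic by the identical argument, simply state the simultaneous estimate over $r\in\{1,\dots,p\}$.
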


\begin{proof}
Pick a generic point $z\in \overline{\Orb(x,T)}$ for $\mu$.
Then there exists $N_1>N+p$ such that for each $m>N_1$, $\dbl(\E_m(z),\mu)<\frac{\eps}{4}$.
Fix any $m_1>N_1$ sufficiently large to satisfy
\begin{equation}
\frac{5p}{m_1-p}\leq \frac{\eps}{4}. \label{m1p:e4}
\end{equation}
Since $z\in \overline{\Orb(x,T)}$,
there exists $n_1\in\N_0$ such that $d(T^{n_1+i}(x),T^i(z))<\frac{\eps}{4}$
for $i=0,1,\dotsc,m_1-1$.
By Lemma~\ref{lem:measure-approx}\eqref{enum:measure-approx-2},
$\dbl(\E_{m_1}(T^{n_1}(x)),\E_{m_1}(z))<\frac{\eps}{4}$.
Take integers $n_1\leq n < n_1+p$ and $m_1-p< m \leq m_1$ such that $p|n$ and $p|m$.
Then $m_1+n_1-p \leq n+m \leq n_1+m_1+p$ and
by Lemma~\ref{lem:measure-approx}\eqref{enum:measure-approx-1}
we have
\[
    \dbl\bigl(\E_{m_1}(T^{n_1}(x)),\E_{m}(T^{n}(x))\bigr)
    \leq \frac{m_1+m}{m_1\cdot  m} 2p + \frac{p}{m_1\cdot m}m \leq \frac{5p}{m_1-p}
    <\frac{\eps}{4}
\]
and similarly $\dbl(\E_m(z),\E_{m_1}(z))<\frac{\eps}{4}$, which gives
$\dbl(\E_m(T^n(x)),\mu)<\eps$ finishing the proof.
\end{proof}

The following fact seems to be folklore (see the comment in page 451 of~\cite{EKW94}).
We provide a proof for completeness.

\begin{lem}\label{lem:mes:ent:approx:2}
Let $(X,T)$ be a dynamical system and $\mu\in M_T(X)$. Then for every $\eps>0$
there exist ergodic measures $\mu_i$, for $i=1,2,\dotsc,K$ (not necessarily pairwise distinct),
such that
\[
    \dbl\Bigl(\frac{1}{K}\sum_{i=1}^K\mu_i,\mu\Bigr)<\eps,
\]
and
\begin{enumerate}
  \item if $h_\mu(T)<\infty$, then
\[
    \Bigl\vert \frac{1}{K}\sum_{i=1}^K h_{\mu_i}(T)-h_\mu(T)\Bigr\vert<\eps;
\]
  \item if $h_\mu(T)=\infty$, then
\[
    \frac{1}{K}\sum_{i=1}^K h_{\mu_i}(T)>\frac{1}{\eps}.
\]
\end{enumerate}
\end{lem}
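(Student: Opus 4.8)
The plan is to reduce the statement to a finite-dimensional approximation of a barycenter and to handle the entropy coordinate by truncation. First I would invoke the ergodic decomposition \eqref{eq:erg:decomp}, $\mu=\int_{M^{erg}_T(X)}\nu\,d\tau(\nu)$, together with its companion for entropy, $h_\mu(T)=\int_{M^{erg}_T(X)}h_\nu(T)\,d\tau(\nu)$ (the map $\nu\mapsto h_\nu(T)$ is Borel measurable and the integral form of the ergodic decomposition of entropy is classical; see \cite{Walters}). Writing $Y=M^{erg}_T(X)$, I set $F_n(\nu)=\int f_n\,d\nu$ for the fixed dense sequence $(f_n)$ from \eqref{dbl:def} and $H(\nu)=h_\nu(T)$; each $F_n$ is continuous with $|F_n|\leq 1$ (since $\Vert f_n\Vert_\infty\leq\Vert f_n\Vert_{\BL}\leq 1$), while $H$ is Borel measurable. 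For any $\nu_1,\dots,\nu_K\in Y$ the candidate measure is $\frac1K\sum_{i=1}^K\nu_i\in M_T(X)$ (setting $\mu_i:=\nu_i$), and then $\int f_n\,d(\frac1K\sum_i\nu_i)=\frac1K\sum_iF_n(\nu_i)$ and $\frac1K\sum_ih_{\mu_i}(T)=\frac1K\sum_iH(\nu_i)$, whereas $\int f_n\,d\mu=\int_YF_n\,d\tau$ and $h_\mu(T)=\int_YH\,d\tau$.

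Next I would truncate the series defining $\dbl$: choose $N$ with $2^{1-N}<\eps/2$, so that since $|F_n|\leq 1$ the tail $\sum_{n>N}2^{-n}|\frac1K\sum_iF_n(\nu_i)-\int_YF_n\,d\tau|$ is at most $2^{1-N}<\eps/2$ for \emph{every} choice of the $\nu_i$. Thus $\dbl(\frac1K\sum_i\nu_i,\mu)<\eps$ will follow once $|\frac1K\sum_iF_n(\nu_i)-\int_YF_n\,d\tau|<\eps/2$ for $n=1,\dots,N$. This reduces everything to the following finite-dimensional statement, which I would isolate as the core of the argument: given finitely many $\tau$-integrable functions $g_1,\dots,g_m$ on $Y$, the point $b=(\int_Yg_1\,d\tau,\dots,\int_Yg_m\,d\tau)\in\mathbb{R}^m$ can be approximated, in each coordinate to within any prescribed error, by uniform averages $\frac1K\sum_{i=1}^KG(\nu_i)$, where $G=(g_1,\dots,g_m)$ and $\nu_i\in Y$.

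To prove this core claim I would argue that $b$ lies in the closed convex hull $\overline{\mathrm{conv}(G(Y))}$ of the range of $G$. Indeed $b$ is the barycenter of the pushforward $\lambda=G_*\tau$, a Borel probability measure on $\mathbb{R}^m$ with finite first moment and with $\lambda(\overline{\mathrm{conv}(G(Y))})=1$; if $b$ were outside the closed convex set $\overline{\mathrm{conv}(G(Y))}$, a separating hyperplane $\langle a,\cdot\rangle=c$ (with $\langle a,x\rangle\leq c$ on the set and $\langle a,b\rangle>c$) would give $\langle a,b\rangle=\int\langle a,x\rangle\,d\lambda(x)\leq c<\langle a,b\rangle$, a contradiction. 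Since finite rational convex combinations of points of $G(Y)$ are dense in $\mathrm{conv}(G(Y))$, hence in its closure, I can find $\nu_1,\dots,\nu_r\in Y$ and rationals $q_j=p_j/K\geq 0$ with $\sum_jq_j=1$ and $\sum_jq_jG(\nu_j)$ within the prescribed error of $b$; listing each $\nu_j$ with multiplicity $p_j$ realises this as a uniform average $\frac1K\sum_{i=1}^KG(\nu_i)$ with $K=\sum_jp_j$, proving the claim. Applying it to $(g_1,\dots,g_m)=(F_1,\dots,F_N,H)$, with the $F_n$-errors below $\eps/2$ and the $H$-error below $\eps$, settles case (1).

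For case (2), where $\int_YH\,d\tau=\infty$, I would first truncate the entropy: by monotone convergence $\int_Y\min(H,M)\,d\tau\to\infty$ as $M\to\infty$, so I fix $M$ with $\int_Y\min(H,M)\,d\tau>2/\eps$ and apply the core claim to the bounded integrable functions $(F_1,\dots,F_N,\min(H,\,M))$, requiring the first $N$ averages to be within $\eps/2$ of $\int_YF_n\,d\tau$ and the last within $1/\eps$ of $\int_Y\min(H,M)\,d\tau$. This yields ergodic $\mu_i=\nu_i$ with $\dbl(\frac1K\sum_i\nu_i,\mu)<\eps$ and $\frac1K\sum_i\min(H(\nu_i),M)>2/\eps-1/\eps=1/\eps$; since $H\geq\min(H,M)$ pointwise, $\frac1K\sum_ih_{\mu_i}(T)\geq\frac1K\sum_i\min(H(\nu_i),M)>1/\eps$, as required. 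The hard part is the simultaneous control of the (continuous) measure-coordinates and the merely measurable, possibly unbounded, entropy-coordinate: the barycenter-in-closed-convex-hull argument is precisely what lets me handle the unbounded case without any continuity or intermediate-value property of $\nu\mapsto h_\nu(T)$, while the common-denominator trick is what converts approximation by arbitrary convex combinations into the required equal-weight averages.
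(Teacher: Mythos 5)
Your proof is correct, and it takes a genuinely different route from the paper's. The paper also starts from the ergodic decompositions $\mu=\int\nu\,d\tau(\nu)$ and $h_\mu(T)=\int h_\nu(T)\,d\tau(\nu)$, but then proceeds by a partition--oscillation argument: it removes a set $M_n=\{\nu\colon h_\nu(T)\geq n\}$ of small $\tau$-measure and small entropy integral, partitions the rest of $M^{erg}_T(X)$ into finitely many Borel pieces $P_i$ that simultaneously refine a cover by $\dbl$-balls of radius $\eps/8$ and have entropy oscillation less than $\xi$, picks one representative $\mu_i\in P_i$, forms the intermediate measure $\eta=\sum_i\tau(P_i)\mu_i$, compares $h_\eta$ with $h_\mu$ using the fact that $\nu\mapsto h_\nu(T)$ is affine, and finally rationalizes the weights $\tau(P_i)\approx K_i/K$ -- the same common-denominator step you use. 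Your argument instead truncates the series defining $\dbl$ so that everything reduces to finitely many integrable coordinates, and then uses a barycenter/separating-hyperplane argument in $\mathbb{R}^m$ to place the vector of integrals inside $\overline{\mathrm{conv}(G(Y))}$. This softer route buys you two real simplifications: first, you never need affinity of the entropy function, because the lemma only concerns the average $\frac{1}{K}\sum_i h_{\mu_i}(T)$ of entropies of ergodic measures, not the entropy of an average (the paper's use of affinity is only to pass through the auxiliary quantity $h_\eta(T)$); second, your infinite-entropy case follows from the identical claim applied to the bounded truncation $\min(H,M)$ via monotone convergence, whereas the paper switches to the partition entropy $h_\mu(T,\mathcal{P})$ and leaves the modification of the estimates to the reader. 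What the paper's method buys in exchange is explicit control over where the representatives live (one per small-diameter piece $P_i$, weighted by $\tau(P_i)$), which is somewhat more constructive; but both proofs rest on the same inputs -- the ergodic decomposition of the measure and of its entropy, Borel measurability of $\nu\mapsto h_\nu(T)$ -- and both conclude with the rational-weights trick, so your version is a legitimate and in places cleaner alternative.
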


\begin{proof}
We first prove the case $h_\mu(T)<\infty$.
Fix any $\eps>0$. For every $\theta\in M_T(X)$, let
\[V(\theta)=\Bigl\{\nu\in M_T(X)\colon \dbl(\theta,\nu)<\frac{\eps}{8}\Bigr\}.\]
Then $\{V(\theta)\colon \theta\in M_T(X)\}$ is an open over of $M_T(X)$.

By the ergodic decomposition theorem,
there exists a Borel probability measure $\tau$ on $M^{erg}_T(X)$ such that
\begin{equation}
    \mu=\int_{M^{erg}_T(X)} \nu d\tau(\nu). \label{erg:decomp}
\end{equation}
Now by the ergodic decomposition of the entropy (see~\cite[Theorem 8.4 (ii)]{Walters}),
we have
\begin{equation}
    h_\mu(T)=\int_{M^{erg}_T(X)} h_\nu(T) d\tau(\nu)<\infty.\label{erg:decomp:ent}
\end{equation}
Clearly the entropy function $M_T(X)\to [0,\infty]$, $\nu\mapsto h_\nu(T)$ is Borel,
and hence each set $M_n=\{\nu\in M^{erg}_T(X)\colon h_\nu(T)\geq n\}$ for $n=1,2,\ldots$ is Borel.
Since $h_\mu(T)<\infty$, by \eqref{erg:decomp:ent} we have $\tau(M_n)\to 0$ as $n\to\infty$
and hence by Lebesgue's convergence theorem also
$\int_{M_n} h_\nu(T)d\tau(\nu)\to 0$ as $n\to\infty$.

Fix a positive constant $\xi< \eps/8$ and a sufficiently large positive integer $n$,
so that both $\tau(M_n)<\xi$ and $\int_{M_n} h_\nu(T)d\tau(\nu)<\xi$.
By definition $h_\nu(T)$ is bounded on $M^{erg}_T(X)\setminus M_n$, hence
there exists a Borel partition $\{P_1,P_2,\dotsc,P_k\}$ of $M^{erg}_T(X)\setminus M_n$
such that the oscillation of $h_\nu(T)$ on every $P_i$ is less than $\xi$, that is,
for every $i$,
\[
    \sup_{\nu\in P_i}h_\nu(T) -\inf_{\nu\in P_i}h_\nu(T)<\xi.
\]
Moreover, we require that the partition $\mathcal{P}$ refines the open cover
$\{V(\theta)\colon \theta\in M_T(X)\}$, which is possible since $M_T(X)$ is compact.
Without loss of generality, we assume that $\tau(P_i)>0$ for $i=1,2,\dotsc,k$
(if it is not the case, we modify $M_n$ so that it includes
each set from $\{P_i\colon \tau(P_i)=0\}$).

Put $P_0=M_n$ and denote $\mathcal{P}=\{P_0,P_1,P_2,\dotsc,P_k\}$.
Then $\mathcal{P}$ is a partition of $M^{erg}_T(X)$.
For every $i=1,2,\dotsc,k$,
fix any $\mu_i\in P_i$.
If $\tau(P_0)>0$ then $\inf_{\nu\in P_0} h_\nu(T)<\infty$ and
so we can find $\mu_0\in P_0$ such that $h_{\mu_0}(T)<\inf_{\nu\in P_0} h_\nu(T)+\xi$.
If $\tau(P_0)=0$ we take any $\mu_0\in M_T(X)$.
Finally, define a probability measure
\[
    \eta=\sum_{i=0}^k \tau(P_i)\mu_i.
\]
Since $h_\nu(T)$ is an affine function of $\nu$	(see \cite[Proposition~10.13]{DGS}) we obtain
that
\begin{align*}
|h_\eta(T)-h_\mu(T)|&= \Bigl|\sum_{i=0}^k \tau(P_i) h_{\mu_i}(T)-\int_{M^{erg}_T(X)} h_\nu(T) d\tau(\nu)\Bigr|\\
&\leq \sum_{i=0}^k \int_{P_i} |h_{\mu_i}(T)-h_\nu(T)| d\tau(\nu)\\
&\leq \sum_{i=1}^k \tau(P_i)\xi +\tau(P_0) h_{\mu_0}(T)+\int_{P_0} h_\nu(T) d\tau(\nu)\\
&\leq \xi+2\int_{P_0} h_\nu(T) d\tau(\nu)\leq \xi+2\xi<\frac{\eps}{2}.
\end{align*}

Recall that for every $i=1,2,\dotsc,k$ we have $\mu_i\in P_i$
and $P_i\subset V(\theta_i)$ for some $\theta_i\in M_T(X)$.
So for $i=1,2,\dotsc,k$ we obtain
\[ \dbl\Bigl(\mu_i, \frac{1}{\tau(P_i)}\int_{P_i}\nu d\tau(\nu)\Bigr)<\frac{\eps}{4}.\]
Now by $\tau(P_0)<\frac{\eps}{8}$, we have
\begin{align*}
\dbl(\eta,\mu)&=\dbl\Bigl(\sum_{i=0}^k \tau(P_i)\mu_i,  \sum_{i=0}^k\int_{P_i}\nu d\tau(\nu)\Bigr)\\
&\leq \sum_{i=1}^k \tau(P_i) \dbl\Bigl(\mu_i, \frac{1}{\tau(P_i)}\int_{P_i}\nu d\tau(\nu)\Bigr)
    +2\tau(P_0)  <\frac{\eps}{2}.
\end{align*}
Taking a positive integer $K$ sufficiently large, we can find positive integers
$K_i$ such that $\sum_{i=0}^n K_i=K$,
\[
    \dbl\Bigl(\sum_{i=0}^k\tau(P_i) \mu_i,\sum_{i=0}^k \frac{K_i}{K}\mu_i \Bigr) <\frac{\eps}{2},
\]
and
\[
    \Bigl\vert \sum_{i=0}^k\tau(P_i) h_{\mu_i}(T)-
    \sum_{i=1}^k \frac{K_i}{K} h_{\mu_i}(T)\Bigr\vert<\frac{\eps}{2}.
\]
Put $\nu=\sum_{i=1}^k\frac{K_i}{K}\mu_i$ and observe that

\[
    \dbl(\nu,\mu)\leq \dbl(\mu,\eta)+\dbl(\eta,\mu)<\eps,
\]
and
\[
    |h_{\mu}(T)-h_\nu(T)|\leq |h_{\mu}(T)-h_\eta(T)|+|h_\eta(T)-h_{\nu}|<\eps.
\]
This completes the proof for the case $h_\mu(T)<\infty$.

For the proof in case $h_\mu(T)=\infty$, pick
a finite Borel partition $\mathcal{P}$ of $X$ with $h_\mu(T,\mathcal{P})>\frac{2}{\eps}$.
Vy the ergodic decomposition of the entropy of the partition (see \cite[Theorem 8.4 (i)]{Walters}),
we have
\begin{equation}
h_\mu(T,\mathcal{P})=\int_{M^{erg}_T(X)} h_\nu(T,\mathcal{P}) d\tau(\nu)<\infty.\label{erg:decomp:ent-2}
\end{equation}
Using \eqref{erg:decomp:ent-2} instead of \eqref{erg:decomp:ent},
it is not hard to modify the proof of the case $h_\mu(T)<\infty$ to work in the case $h_\mu(T)=\infty$.
\end{proof}

For a given $F\subset M_T(X)$, we use the notation
\[
    X_{n,F}=\{x\in X\colon \mathcal{E}_n(x)\in F\}.
\]
The following result is a simplified, one-dimensional version of \cite[Proposition~2.1]{PS}
(cf. \cite{EKW94,DOT15}).
It will be very useful in further calculations in Section 4.

\begin{lem}\label{lem:sep}
Let $(X,T)$ be a dynamical system, $\mu \in M_T^{erg}(X)$.
For every $\eps>0$, there exists $\delta>0$ such that for every neighborhood $F\subset M(X)$ of $\mu$ there is $n_F\in \N$
such that for any $n>n_F$,
there exists $\Gamma_n \subset X_{n,F}$ which is $(n, \delta)$-separated and
\begin{enumerate}
  \item if $h_\mu(T)<\infty$, then $|\frac{1}{n}\log|\Gamma_n|-h_\mu(T)|<\eps$;
  \item if $h_\mu(T)=\infty$, then $\frac{1}{n}\log|\Gamma_n|>\frac{1}{\eps}$.
\end{enumerate}
\end{lem}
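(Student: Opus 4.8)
The plan is to reduce the statement, via the Birkhoff ergodic theorem, to producing large $(n,\delta)$-separated sets inside a single set whose measure is bounded away from $0$, and then to read off the cardinality from Katok's entropy formula. Fix $\eps>0$, and for a neighbourhood $F$ of $\mu$ consider the increasing family
\[
 A_N=\{x\in X\colon \E_n(x)\in F \text{ for all } n\geq N\},\qquad N\in\N.
\]
Since $\mu$ is ergodic, $\mu$-almost every $x$ is generic for $\mu$, so $\E_n(x)\to\mu$ and hence $x\in A_N$ for some $N$; thus $\bigcup_N A_N$ has full measure, and there is $N_F$ with $\mu(A_{N_F})>\tfrac12$. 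Writing $Y=A_{N_F}$, by construction $Y\subset X_{n,F}$ for every $n\geq N_F$, so it suffices to build the required separated sets inside this fixed set $Y$.

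The separation constant is chosen before $F$, using only $\eps$ and $\mu$. By Katok's entropy formula, if $N(n,\delta,\gamma)$ denotes the least number of Bowen balls $B_n(z,\delta)=\{y\colon d(T^iy,T^iz)<\delta,\ 0\le i<n\}$ whose union has measure at least $1-\gamma$, then for the fixed value $\gamma=\tfrac12$ one has $\liminf_n \tfrac1n\log N(n,\delta,\tfrac12)\to h_\mu(T)$ as $\delta\to 0$. In the case $h_\mu(T)<\infty$ I would pick $\delta>0$ with $\liminf_n\tfrac1n\log N(n,\delta,\tfrac12)>h_\mu(T)-\tfrac\eps2$. Now let $\Gamma_n$ be a maximal $(n,\delta)$-separated subset of $Y$. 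Maximality makes $\Gamma_n$ also $(n,\delta)$-spanning in $Y$, so the balls $\{B_n(z,\delta)\colon z\in\Gamma_n\}$ cover $Y$, a set of measure $>\tfrac12$; hence $|\Gamma_n|\ge N(n,\delta,\tfrac12)$, and therefore $\tfrac1n\log|\Gamma_n|>h_\mu(T)-\tfrac\eps2$ for all large $n$.

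This already gives the lower estimate, and the upper estimate comes for free by trimming. Since $|\Gamma_n|>e^{n(h_\mu(T)-\eps/2)}$ for large $n$, I would replace $\Gamma_n$ by any subset of cardinality exactly $M_n=\lceil e^{n(h_\mu(T)-\eps/2)}\rceil$; a subset of a separated set lying in $X_{n,F}$ keeps both properties, and $\tfrac1n\log M_n\to h_\mu(T)-\tfrac\eps2$, so that $\bigl|\tfrac1n\log|\Gamma_n|-h_\mu(T)\bigr|<\eps$ for large $n$. Setting $n_F=\max\{N_F,n_0\}$, where $n_0$ depends only on $\eps$ and $\delta$, completes the finite case. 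For $h_\mu(T)=\infty$ the same scheme applies with no trimming: using $\lim_{\delta\to 0}\liminf_n\tfrac1n\log N(n,\delta,\tfrac12)=\infty$, choose $\delta$ with $\liminf_n\tfrac1n\log N(n,\delta,\tfrac12)>\tfrac1\eps$, and the maximal $(n,\delta)$-separated subset of $Y$ then satisfies $\tfrac1n\log|\Gamma_n|>\tfrac1\eps$ for all large $n$.

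The one point demanding care is the order of the quantifiers: $\delta$ must be extracted from Katok's formula using $\eps$ and $\mu$ alone, independently of the neighbourhood $F$, while $n_F$ may depend on $F$ only through the Birkhoff threshold $N_F$. The Birkhoff step is precisely what decouples these, furnishing for each $F$ a fixed set $Y\subset X_{n,F}$ of measure $>\tfrac12$ to which the preselected $\delta$ applies. The second thing worth stressing is that no large-deviations upper bound on the number of separated points with empirical measure near $\mu$ is needed: because the statement only asks for the \emph{existence} of one suitable $\Gamma_n$, the upper cardinality bound is obtained simply by discarding points from the lower-bound construction.
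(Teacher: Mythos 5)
Your proposal is correct in substance, but it cannot be compared line-by-line with ``the paper's proof'' because the paper does not prove Lemma~\ref{lem:sep} at all: it is stated as a simplified, one-dimensional version of \cite[Proposition~2.1]{PS} (cf.\ \cite{EKW94,DOT15}) and used as a black box. Your argument is therefore a genuine self-contained alternative: Birkhoff's theorem pins down, for each neighborhood $F$, a set $Y=A_{N_F}$ with $\mu(Y)>\tfrac12$ and $Y\subset X_{n,F}$ for all $n\geq N_F$; Katok's entropy formula at the fixed level $\gamma=\tfrac12$ produces $\delta$ from $\eps$ and $\mu$ alone; maximal $(n,\delta)$-separated subsets of $Y$ convert the covering-number lower bound into a separated-set lower bound; and trimming gives the two-sided estimate. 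This respects the quantifier order ($\delta$ chosen before $F$), which is indeed the delicate point, and your observation that only \emph{existence} of $\Gamma_n$ is needed (so the upper bound comes free by discarding points) is exactly right. What the citation buys the paper is that Pfister--Sullivan prove their proposition directly by Shannon--McMillan--Breiman-type counting, while you outsource that counting to Katok's formula; the underlying mathematics is the same, and your reduction is arguably cleaner to check. Two small repairs you should make. First, maximality of an $(n,\delta)$-separated set in $Y$ yields a cover of $Y$ by \emph{closed} Bowen balls of radius $\delta$ (failure of separation means $d(T^ix,T^iz)\leq\delta$ for all $0\leq i<n$, not $<\delta$), so either define the covering numbers $N(n,\delta,\tfrac12)$ with closed balls or compare with open balls at scale $2\delta$; this is harmless since Katok's formula takes $\delta\to 0$. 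Second, be explicit that you invoke the version of Katok's formula valid for continuous, not necessarily invertible, maps and also covering the case $h_\mu(T)=\infty$; the one inequality you actually use (for small $\delta$ the covering numbers eventually exceed $e^{n(h_\mu(T)-\eps/2)}$, respectively $e^{n/\eps}$) does hold in this generality, because its proof uses only the ergodic theorem and the Shannon--McMillan--Breiman theorem.
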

\begin{rem}\label{rem:sep}
For an invariant measure $\nu$ we can restrict $T$ to the support of $\mu$,
hence in Lemma~\ref{lem:sep} we can additionally assume that $\Gamma_n\subset\supp(\mu)$.
\end{rem}

\section{The space of invariant measures and the shadowing property}\label{sec:3}
The aim of this section is to prove Theorem A and present some of its consequences.
 To make the idea more clear, we outline the proof of Theorem A as follows.
In a dynamical system with the shadowing property,
if a point $x$ that is close to $T^n(x)$,
then by \cite[Theorem 3.2]{MO13} there exists a regularly recurrent point
$z$ that traces the point $x$.
Here we further extend this technique obtaining that $z$ is equicontinuous and hence
the orbit closure of $z$ is an odometer (see Lemma~\ref{lem:odometer}).

Given an ergodic measure $\nu$, we approximate it,
in the weak$^*$-topology, by an empirical measure of a generic point $x$.
Applying Lemma~\ref{lem:odometer} to $x$,
we obtain a regularly recurrent point $z$ whose orbit closure is an odometer
and $z$ is sufficiently close to $x$, so  that
the empirical measure of $z$ is close in the weak$^*$-topology
to the empirical measure of $x$ and thus the ergodic measure $\nu$.

Now, given a general invariant measure $\mu$ whose support is contained in a single chain-recurrent class,
by the ergodic decomposition theorem we first approximate
it by a finite convex combination of ergodic measures and
then approximate each ergodic measure by an empirical measure of a generic point.
As the support of $\mu$ is contained in a single chain-recurrent class,
by the shadowing property we approximate a sequence created by segments of orbits of those generic points by a point $y$
such that combination of their empirical measures are also to empirical measures obtained on the orbit of $y$.
Similar to the case of ergodic measure,
applying Lemma~\ref{lem:odometer} to $y$,
we get an invariant measure supported on an odometer which is
close the measure $\mu$ in the weak$^*$-topology
 (see Theorem~\ref{thm:approx:erg}).

In our construction we will need the following auxiliary lemmas. Our goal is to construct a regularly recurrent point. It will be achieved by a
	properly performed approximation. The main idea is to use induction, replacing in each step point with $d(x,T^n(x))<\delta$
	by a point $x'$ with sufficiently small value of $d(x',T^{n'}(x))$ for some $n'>0$ and with an $n$-periodic behavior up to accuracy $\eps$, that is $d(T^{j}(x'),T^i(x'))\leq 2\eps$ provided that $j-i\in n\N$ and $0\leq i<n$. Note that such an induction can be initiated using any recurrent point. 
\begin{lem}\label{lem21}
Suppose that a dynamical system $(X,T)$ has the shadowing property.
Let $\eps>0$ and $\delta>0$ be provided for $\eps$ by the shadowing property, and let
 $x\in X$. If $d(T^n(x),x)<\delta$ for some $n\in \N$
then for any $\xi>0$ there exist $x'\in X$ and $n'\in \N$ which is divisible by $n$ such that
\begin{enumerate}
\item[(a)] $d(T^{n'}(x'),x')<\xi$, and
\item[(b)] $d(T^{jn+i}(x'),T^i(x))\leq \eps$ for $i=0,1,\dotsc,n-1$ and every $j\geq 0$.
\end{enumerate}
\end{lem}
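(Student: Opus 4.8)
The plan is to manufacture an infinite periodic $\delta$-pseudo-orbit out of the finite orbit segment $x, T(x), \dotsc, T^{n-1}(x)$, shadow it using the shadowing property, and then extract a near-return in order to obtain (a).

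First I would observe that the periodic repetition
\[
x, T(x), \dotsc, T^{n-1}(x), x, T(x), \dotsc, T^{n-1}(x), x, \dotsc
\]
(the block of length $n$ repeated forever, so that the term in position $jn+i$ is exactly $T^i(x)$ for $0\leq i<n$) is a $\delta$-pseudo-orbit. Indeed, inside each block consecutive entries are genuine orbit points, so the consecutive gaps are $0$, while at every junction the gap equals $d(T^n(x),x)<\delta$ by hypothesis. Since $\delta$ is the shadowing constant associated to $\eps$, there is a point $w\in X$ that $\eps$-traces this pseudo-orbit, i.e. $d(T^{jn+i}(w),T^i(x))<\eps$ for all $j\geq 0$ and $0\leq i<n$. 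This already yields condition (b) for $w$ (with strict inequality, hence certainly with $\leq\eps$); in particular every point $T^{jn}(w)$ lies in the $\eps$-ball around $x$.

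The remaining task, and the only real point of the argument, is to upgrade $w$ to a point that nearly returns to itself along a time divisible by $n$; condition (b) alone does not provide this, since $w$ need not be periodic. Here I would invoke compactness of $X$: the sequence $\bigl(T^{jn}(w)\bigr)_{j\geq 0}$ has a convergent subsequence, so there exist $j_1<j_2$ with $d\bigl(T^{j_1 n}(w),T^{j_2 n}(w)\bigr)<\xi$. Setting $x':=T^{j_1 n}(w)$ and $n':=(j_2-j_1)n$, we have $n\mid n'$ and
\[
d(T^{n'}(x'),x')=d\bigl(T^{j_2 n}(w),T^{j_1 n}(w)\bigr)<\xi,
\]
which is (a). Condition (b) is inherited by the time-shifted point $x'$: for $j\geq 0$ and $0\leq i<n$ one has $T^{jn+i}(x')=T^{(j+j_1)n+i}(w)$, whence $d(T^{jn+i}(x'),T^i(x))<\eps$ because $j+j_1\geq 0$. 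Thus $x'$ and $n'$ satisfy both requirements.

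Overall the construction is short. The step that requires care is the passage from the shadowing point $w$, which merely traces the repeated pattern, to the almost-periodic point $x'$, and this is precisely where compactness enters, through a pigeonhole/subsequence argument applied to the subsampled orbit $\bigl(T^{jn}(w)\bigr)_{j\geq 0}$. I do not expect any genuine obstacle beyond bookkeeping the indices so that the returning time stays a multiple of $n$.
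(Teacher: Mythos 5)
Your proof is correct. The first half coincides with the paper's: both form the same periodic $\delta$-pseudo-orbit by repeating the block $x,T(x),\dotsc,T^{n-1}(x)$ and invoke shadowing, so that any tracing point satisfies (b). Where you diverge is in extracting the near-return (a). The paper does not work with a single tracing point: it defines $Y=\{y\in X\colon d(T^{jn+i}(y),T^i(x))\leq\eps \text{ for all } j\geq 0,\ 0\leq i<n\}$, observes that $Y$ is nonempty, closed and $T^n$-invariant, takes a minimal subset $D$ of $(Y,T^n)$, and picks $x'\in D$; then $x'$ is uniformly recurrent under $T^n$, which yields arbitrarily small returns $d(T^{nk}(x'),x')<\xi$. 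You instead apply compactness (pigeonhole on a convergent subsequence) to the subsampled orbit $\bigl(T^{jn}(w)\bigr)_{j\geq 0}$ of the tracing point $w$ itself, finding $j_1<j_2$ with close iterates, and shift to $x'=T^{j_1 n}(w)$; condition (b) survives the shift exactly as you say, because $T^{jn+i}(x')=T^{(j+j_1)n+i}(w)$ and the pseudo-orbit is $n$-periodic. Your route is more elementary: it avoids the existence of minimal sets (hence Zorn's lemma) and the notion of uniform recurrence. What the paper's route buys is a single point $x'$, uniformly recurrent for $T^n$, that works for every $\xi>0$ simultaneously (only $n'$ varies), whereas your $x'$ depends on $\xi$; however, the lemma as stated permits this dependence, and the downstream application in the paper (the inductive construction of the regularly recurrent point) uses only conclusions (a) and (b) with $\xi$ fixed in advance, so your version suffices there as well.
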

\begin{proof}
Let $\alpha$ be the periodic $\delta$-pseudo orbit,
\[x,T(x), \dotsc,T^{n-1}(x), x,T(x), \dotsc,T^{n-1}(x),x,T(x), \dotsc,T^{n-1}(x),\dotsc\]
Let $Y$ be the collection of points $y\in X$ such that
$d(T^{jn+i}(y),T^i(x))\leq \eps$ for $i=0,1,\dotsc,n-1$.
Every point $\eps$-tracing the pseudo-orbit $\alpha$ is in $Y$,
hence $Y$ is non-empty, closed and $T^n(Y)\subset Y$.

Take any minimal subset $D$ for $(Y,T^n)$ and fix any $x'\in D\subset Y$. By definition $x'$ is a uniformly recurrent point of $T^n$, therefore for any $\xi>0$ there exists $k\in \N$ such that $d(T^{nk}(x'),x')<\xi$.
If we put $n'=nk$ then $x'$ and $n'$ are as required.
\end{proof}

\begin{lem}\label{lem:odometer}
Suppose that a dynamical system $(X,T)$ has the shadowing property.
Let $\eps>0$ and $\delta>0$ be provided for $\eps/4$ by the shadowing property and let
 $x\in X$. If $d(T^n(x),x)<\delta$ for some $n\in \N$
then there exists a point $z\in X$ such that:
\begin{enumerate}
\item $z$ is regularly recurrent and $(\overline{\Orb(z,T)},T)$ is an odometer,
\item for every $j\geq 0$, $d(T^{jn+i}(z),T^i(x))\leq \eps$ for $i=0,1,\dotsc,n-1$.
\end{enumerate}
\end{lem}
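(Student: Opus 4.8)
The plan is to construct $z$ as the limit of a sequence obtained by iterating Lemma~\ref{lem21}. I would start from $x_0=x$, $n_0=n$ and $\eta_0=\eps/4$, so that the given $\delta$ is precisely the shadowing constant for $\eta_0$. Fixing a summable sequence $\eta_0>\eta_1>\cdots\to 0$ and letting $\delta_{\ell+1}$ be a shadowing constant for $\eta_{\ell+1}$, I apply Lemma~\ref{lem21} at step $\ell$ with parameters $(\eta_\ell,\delta_\ell)$, point $x_\ell$, period $n_\ell$ and $\xi=\delta_{\ell+1}$. This yields $x_{\ell+1}$ and $n_{\ell+1}$ divisible by $n_\ell$ with $d(T^{n_{\ell+1}}(x_{\ell+1}),x_{\ell+1})<\delta_{\ell+1}$ (so the induction continues) and, by part~(b), $d(T^{jn_\ell+i}(x_{\ell+1}),T^i(x_\ell))\le\eta_\ell$ for $0\le i<n_\ell$ and all $j\ge 0$. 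Taking $j=0$ and combining with the general-$j$ inequality gives the \emph{strong approximate periodicity} $d(T^{jn_{\ell-1}+i}(x_\ell),T^i(x_\ell))\le 2\eta_{\ell-1}$ of each $x_\ell$ at scale $n_{\ell-1}$.

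Next I would establish a uniform tracking estimate. Writing $m=jn_\ell+i$ with $0\le i<n_\ell$ and combining part~(b) with the strong approximate periodicity of $x_\ell$ (observe $jn_\ell$ is a multiple of $n_{\ell-1}$ since $n_{\ell-1}\mid n_\ell$), one obtains $d(T^m(x_{\ell+1}),T^m(x_\ell))\le\eta_\ell+4\eta_{\ell-1}$ for \emph{every} $m$. As the right-hand side is summable, for each fixed $m$ the sequence $(T^m(x_\ell))_\ell$ is Cauchy with a modulus independent of $m$; hence $x_\ell\to z$ for some $z$, and $d(T^m(x_\ell),T^m(z))\le R_\ell$ for all $m$, with $R_\ell\to 0$. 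This uniform-in-$m$ control is the technical heart of the argument and is exactly where the shadowing property (through the chaining in Lemma~\ref{lem21}) enters.

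Property~(2) would then follow from a \emph{diagonal folding}. Given $t=jn+i$ with $0\le i<n$, set $i_\ell=t\bmod n_\ell$; since $n_{\ell-1}\mid n_\ell$ the residues are consistent, $i_{\ell-1}=i_\ell\bmod n_{\ell-1}$, and $i_0=i$. Applying part~(b) at each level telescopes to $d(T^t(x_L),T^i(x_0))\le\sum_{\ell=0}^{L-1}\eta_\ell$, and adding the tracking bound $R_L$ gives $d(T^{jn+i}(z),T^i(x))\le R_L+\sum_{\ell<L}\eta_\ell$, which is below $\eps$ once the $\eta_\ell$ $(\ell\ge1)$ and $R_L$ are small (recall $\eta_0=\eps/4$). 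It is important to route~(2) through this folding rather than through the self-periodicity of $z$, whose accuracy at the coarse scale $n_0$ is only of order $\eps$.

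The hard part will be showing that $(\overline{\Orb(z,T)},T)$ is a genuine \emph{odometer}, i.e.\ equicontinuous: regular recurrence alone would only give an almost $1$-$1$ extension of an odometer (a Toeplitz subshift is regularly recurrent yet not equicontinuous), so the extra rigidity from shadowing must be used. The key point is that the tracking estimate upgrades the strong approximate periodicity to $z$ itself: $d(T^{jn_k+i}(z),T^i(z))\le\rho_k:=2R_{k+1}+2\eta_k$ with $\rho_k\to 0$. Consequently, for \emph{every} $w$ in $Z:=\overline{\Orb(z,T)}$ one has $d(T^{jn_k}(w),w)\le 2\rho_k$, i.e.\ $T^{n_k}\to\mathrm{id}$ uniformly on $Z$. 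Writing an arbitrary $m$ as $jn_k+i$ then reduces $T^m$ to one of the finitely many maps $T^0,\dots,T^{n_k-1}$ up to an error $2\rho_k$; since a finite family of continuous maps on a compact space is uniformly equicontinuous, choosing $k$ with $\rho_k$ small produces, for any $\gamma>0$, a $\beta>0$ witnessing equicontinuity. This sidesteps the usual obstacle that nearby orbit points need not share the same residue modulo $n_k$. The $i=0$ case of the approximate periodicity shows $z$ is regularly recurrent, so $Z$ is equicontinuous with a dense regularly recurrent orbit, hence an odometer by definition, completing the proof.
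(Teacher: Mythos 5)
Your proposal is correct, and its core is the same induction as the paper's proof: iterate Lemma~\ref{lem21} with summable accuracies (the paper takes $\eps_k=\eps/4^k$), feeding each new shadowing constant back in as the parameter $\xi$ so the induction can continue, and exploit the divisibility $n_{\ell-1}\mid n_\ell$ to telescope estimates across scales; your ``diagonal folding'' for property (2) is exactly the paper's repeated reduction that yields $d(T^{jn_k+i}(z_m),T^i(z_k))\le 2\eps_k$ and hence, in the limit, both property (2) and regular recurrence. The genuine difference is the equicontinuity endgame. The paper argues locally at $z$: it shows that $Z_{n_k}=\overline{\Orb(z,T^{n_k})}$ and all its forward images have diameter at most $4\eps_k$ and that $Z_{n_k}$ is a neighborhood of $z$ in $\overline{\Orb(z,T)}$ (via uniform recurrence), so $z$ is an equicontinuity point; promoting this to equicontinuity of the whole orbit closure then rests on minimality, a step the paper leaves implicit. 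You argue globally: from $\sup_{w\in Z}d(T^{jn_k}(w),w)\le 2\rho_k\to 0$, every iterate $T^m$ is uniformly $2\rho_k$-close on $Z$ to one of the finitely many maps $T^0,\dots,T^{n_k-1}$, and equicontinuity of $(Z,T)$ follows at once from uniform continuity of this finite family. This buys a self-contained finish that never invokes the fact that a minimal system with one equicontinuity point is equicontinuous, at the price of the uniform-in-$m$ tracking bound $d(T^m(x_\ell),T^m(z))\le R_\ell$ — which, note, is only available for $\ell\ge 1$, since $x_0=x$ has no approximate periodicity of its own; this is harmless because you never need the $\ell=0$ case. Your remark that regular recurrence alone would only give an almost 1-1 extension of an odometer (Toeplitz systems) correctly identifies why neither proof can stop before the equicontinuity argument.
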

\begin{proof}
We will start the proof with a recursive construction of a sequence of points $(z_k)$.
First, define sequences $(\eps_k),(\xi_k)$ of positive numbers inductively by
\begin{itemize}
\item $\eps_1=\eps/4$ and $\eps_{k+1}=\eps_k/4$,
\item $\xi_{k}$ is provided for $\eps_{k}$ by shadowing.
\end{itemize}
Let $z_1=x$ and $n_1=n$.
By Lemma~\ref{lem21} there exist $z_2\in X$ and $n_2\in\N$ such that
 \begin{enumerate}[(i)]
\item\label{L3:2a} $d(T^{n_2}(z_2),z_2)<\xi_2$,
\item\label{L3:2b} $d(T^{jn_1+i}(z_2),T^i(z_1))\leq \eps_1$ for $i=0,1,\dotsc,n_1-1$ and every $j\geq 0$.
\end{enumerate}
Proceeding inductively, for each $k>1$ we construct a point $z_k\in X$ and an integer $n_k\in\N $
such that
\begin{enumerate}[(i)]\setcounter{enumi}{2}
  \item\label{L3:ka} $d(T^{n_k}(z_k),z_k)<\xi_k$,
  \item\label{L3:kb} $d(T^{jn_{k-1}+i}(z_k),T^i(z_{k-1}))\leq \eps_{k-1}$ for $i=0,1,\dotsc,n_{k-1}-1$ and all $j\geq 0$.
\end{enumerate}
In particular, $d(z_k,z_{k-1})<\eps_{k-1}$ and therefore for $k<m$ we have
\[d(z_k,z_m)\leq \sum_{i=k}^{m-1} d(z_i,z_{i+1})<\sum_{i=k}^{m-1}\eps_{i}<2\eps_{k}.\]
This shows that $(z_k)$ is a Cauchy sequence and
so $z=\lim_{k\to\infty}z_k$ is well defined and for every $k$ we have
$d(z_k,z)\leq \eps_k$.

Fix any $j\geq 0$, $k\geq 1$ and $m>k$.
Since $n_k$ divides $n_{m-1}$, there exists $l\geq 0$ and $j_{m-1}\geq 0$
such that $jn_k=ln_{m-1}+j_{m-1}n_k$.
In fact, we can put $l=\lfloor\frac{jn_k}{n_{m-1}}\rfloor$ and $j_{m-1}=j-\frac{ln_{m-1}}{n_k}$.
By \eqref{L3:kb}, we have
\[
    d(T^{ln_{m-1}+i}(z_m),T^{i}(z_{m-1}))\leq\eps_{m-1}\text{ for }i=0,1,\dotsc,n_{m-1}-1.
\]
In particular,
\[
    d(T^{jn_k+i}(z_m),T^{j_{m-1} n_k+i}(z_{m-1}))\leq\eps_{m-1}\text{ for }i=0,1,\dotsc,n_k-1.
\]
By the same argument,
there is $j_{m-2}\geq 0$  such that
\[
    d(T^{j_{m-1} n_k+i}(z_{m-1}),T^{j_{m-2} n_k+i}(z_{m-2}))\leq \eps_{m-2}
    \text{ for }t=0,1,\dotsc,n_k-1,
\]
and then
\[
    d(T^{j n_k+i}(z_m),T^{j_{m-2} n_k+i}(z_{m-2}))\leq \eps_{m-1}+\eps_{m-2}
    \text{ for }t=0,1,\dotsc,n_k-1.
\]
Repeating this reduction $(m-k)$-times we obtain
\begin{equation}
d(T^{jn_k+i}(z_m),T^i(z_k))\leq \sum_{l=k}^{m-1}\eps_{l}\leq 2\eps_k
 \text{ for }i=0,1,\dotsc,n_k-1.\label{eq21.i}
\end{equation}
Passing with $m\to\infty$ with a fixed index $0\leq i< n_k$ in \eqref{eq21.i},
we get
\begin{equation}
d(T^{jn_k+i}(z),T^i(z_k))\leq 2\eps_k.   \label{eq22.i}
\end{equation}
In particular, putting $i=0$ we obtain
\begin{equation}
d(T^{jn_k}(z),z)\leq 2\eps_k+d(z,z_k)\leq 4\eps_k.
\end{equation}
As $j \geq 0$ and $ k\geq 1$ are arbitrary, $z$ is regularly recurrent.

Let $Z_{n_k}=\overline{\Orb(z,T^{n_k})}$.
Then by \eqref{eq22.i} for every $i\geq 0$, every $z'\in Z_{n_k}$
and every $\gamma>0$ there is $j\geq 0$ such that
$$
d(T^i(z'),T^i(z))\leq d(T^{jn_k+i}(z),T^i(z'))+d(T^{jn_k+i}(z),T^i(z))\leq 2\eps_k+\gamma.
$$
Passing with $\gamma\to 0$ we obtain $d(T^i(z'),T^i(z''))\leq 4\eps_k$ for every $z',z''\in Z_{n_k}$
and every $i\geq 0$.
In particular $\diam Z_{n_k}\leq 4\eps_k$ and
$Z_{n_k}$ is a neighborhood of $z$ in $\overline{\Orb(z,T)}$
because $z$ is a uniformly recurrent point.
This implies that $z$ is an equicontinuous point,
and therefore $(\overline{\Orb(z,T)},T)$ is an odometer,
since $z$ is regularly recurrent.
\end{proof}

\begin{cor}\label{cor:dense-of-odometers}
Let $(X,T)$ be a dynamical system with the shadowing property.
Then the collection of points whose orbit closure is an odometer is dense
in the non-wandering set $\Omega(X,T)$.
\end{cor}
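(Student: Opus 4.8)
The plan is to derive Corollary~\ref{cor:dense-of-odometers} as a direct consequence of Lemma~\ref{lem:odometer}, by showing that every point of $\Omega(X,T)$ can be approximated arbitrarily well by a point whose orbit closure is an odometer. First I would fix a point $w\in\Omega(X,T)$ and an arbitrary $\eps>0$, and my goal will be to produce a point $z$ with $d(w,z)\leq\eps$ whose orbit closure is an odometer. The key input is that in a system with the shadowing property we have $\Omega(X,T)=\CR(X,T)$, as recalled in the preliminaries; hence $w$ is chain recurrent, meaning $w\sim w$.

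The main step is to manufacture a point $x$ and an integer $n$ to which Lemma~\ref{lem:odometer} applies, while keeping $x$ close to $w$. Given $\eps>0$, let $\delta>0$ be the tracing constant provided for $\eps/4$ by the shadowing property (exactly as in Lemma~\ref{lem:odometer}). Since $w$ is chain recurrent, for this $\delta$ there exists a $\delta$-chain from $w$ to itself, that is a finite sequence $w=x_0,x_1,\dots,x_n=w$ with $d(T(x_i),x_{i+1})<\delta$ for $0\leq i<n$. I would then take $x=w$; concatenating this chain periodically shows in particular that $d(T^n(x),x)=d(w,w)=0<\delta$, so the hypothesis $d(T^n(x),x)<\delta$ of Lemma~\ref{lem:odometer} is satisfied. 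Strictly speaking one should note the chain has length $n\geq 1$; if the only available chain had length forcing a degenerate situation, one simply composes the chain with itself to obtain arbitrarily long closed $\delta$-chains based at $w$, so $n\in\N$ can be assumed.

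Applying Lemma~\ref{lem:odometer} to $x=w$ and this $n$, I obtain a point $z\in X$ that is regularly recurrent, whose orbit closure $(\overline{\Orb(z,T)},T)$ is an odometer, and which satisfies $d(T^{jn+i}(z),T^i(x))\leq\eps$ for $i=0,1,\dots,n-1$ and all $j\geq 0$. Taking $j=0$ and $i=0$ gives $d(z,x)=d(z,w)\leq\eps$. Thus $z$ is a point whose orbit closure is an odometer and which lies within $\eps$ of the arbitrary non-wandering point $w$. Since $\eps>0$ was arbitrary, the collection of such points is dense in $\Omega(X,T)$, which is the assertion of the corollary.

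The step I expect to require the most care is the passage from chain recurrence of $w$ to a genuine closed $\delta$-pseudo-orbit with a well-defined integer period $n$ suitable for feeding into Lemma~\ref{lem:odometer}: one must ensure the chain starts and ends exactly at $w$ (so that $d(T^n(x),x)<\delta$ holds with $x=w$) and that the endpoints are identified correctly with the indexing used in the lemma. This is the only place where the identity $\Omega(X,T)=\CR(X,T)$ is essential, and everything else is a direct quotation of the conclusion of Lemma~\ref{lem:odometer}; the remaining inequalities are immediate specializations obtained by setting $j=i=0$.
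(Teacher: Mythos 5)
Your argument contains a genuine error at its central step. The hypothesis of Lemma~\ref{lem:odometer} is that $d(T^n(x),x)<\delta$ for the \emph{actual} iterate $T^n(x)$, i.e.\ the point $x$ must nearly return to itself under the true dynamics. A $\delta$-chain $w=x_0,x_1,\dots,x_n=w$ provided by chain recurrence is only a pseudo-orbit: it gives $d(T(x_i),x_{i+1})<\delta$ at each step, but it says nothing about the location of $T^n(w)$, because the points $x_i$ are not iterates of $w$ and the errors are reset (not propagated through $T$) at every step. Your claim that ``$d(T^n(x),x)=d(w,w)=0<\delta$'' with $x=w$ conflates the endpoint of the chain with the genuine orbit point $T^n(w)$. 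A concrete counterexample: in the full shift $(\Sigma_2^+,\sigma)$ (which has shadowing, and where every point is non-wandering and chain recurrent), take $w=10^\infty$. Then $\sigma^n(w)=0^\infty$ for all $n\geq 1$, so $d(\sigma^n(w),w)$ is bounded below by a fixed positive constant, and the hypothesis of Lemma~\ref{lem:odometer} fails at $x=w$ for every small $\delta$ and every $n$, even though $\delta$-chains from $w$ to itself exist for all $\delta>0$.

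There are two correct ways to bridge this gap. The paper's route is more direct and does not use $\Omega(X,T)=\CR(X,T)$ at all: given $y\in U\cap\Omega(X,T)$, apply the \emph{definition} of non-wandering to a small ball $B_{\delta/2}(y)$ to obtain a (possibly different) point $x\in B_{\delta/2}(y)$ and $n>0$ with $T^n(x)\in B_{\delta/2}(y)$, whence $d(T^n(x),x)<\delta$ is a genuine near-return; Lemma~\ref{lem:odometer} applied to this $x$ produces $z$ with $d(z,x)\leq\eps$, and a triangle inequality puts $z$ in $U$ (note also that $z$, being regularly recurrent, lies in $\Omega(X,T)$, which is needed for density \emph{in} the non-wandering set). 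Alternatively, your chain-recurrence approach can be repaired: take $\xi$ to be a shadowing constant for $\delta/2$, take a $\xi$-chain from $w$ to itself, concatenate it periodically, and let $y$ be a point $\delta/2$-tracing this periodic pseudo-orbit; then $d(T^n(y),y)\leq d(T^n(y),w)+d(w,y)<\delta$ and $d(y,w)<\delta/2$, so Lemma~\ref{lem:odometer} applies to $y$ rather than to $w$. Either repair works, but the proposal as written does not.
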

\begin{proof}
Fix any open set $U$ such that $U\cap \Omega(X,T)\neq \emptyset$.
Fix any $\eps>0$ and $y\in U\cap \Omega(X,T)$
such that $B_{2\eps}(y)\subset U$. Let $\delta<\eps$ be provided by shadowing to $\eps$.
Since $y\in \Omega(X,T)$, there is $x\in B_\eps(y)$ and $n>0$ such that $d(T^n(x),x)<\delta$.
By Lemma~\ref{lem:odometer} there is $z\in B_\eps(x)$ such that $(\overline{\Orb(z,T)},T)$ is an odometer.
But then $z\in \Omega(X,T)$ and $z\in B_\eps(x)\subset B_{2\eps}(y)\subset U$
which completes the proof.
\end{proof}

\begin{thm}\label{thm:approx:erg}
Suppose that $(X,T)$ has the shadowing property and $\mu\in M_T(X)$.
If $\supp(\mu)/_\sim$ is a singleton (i.e. $\supp(\mu)$ is contained in a single chain-recurrent class),
then for every $\eps>0$ there is an ergodic measure $\nu$
supported on an odometer such that $\dbl(\nu,\mu)<\eps$.
\end{thm}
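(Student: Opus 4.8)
The plan is to realize $\mu$ as, approximately, the empirical measure of a single long, almost-periodic orbit segment, and then invoke Lemma~\ref{lem:odometer} to replace that segment by a genuine odometer carrying essentially the same empirical statistics. First I would reduce to ergodic pieces: by Lemma~\ref{lem:mes:ent:approx:2} (using only its measure-approximation conclusion) fix ergodic measures $\mu_1,\dots,\mu_K$, which we may take supported on $\supp(\mu)$, with $\dbl\bigl(\tfrac1K\sum_{i=1}^K\mu_i,\mu\bigr)$ as small as desired. For each $i$ I would choose $y_i\in\supp(\mu_i)$ generic for $\mu_i$, so that $\dbl(\E_m(y_i),\mu_i)$ is small once $m$ is large; a single $m$ serves all $i$ at once. (Equivalently one may feed each $\mu_i$ into Lemma~\ref{lem:measure-in-the-orbit} to extract an orbit block of common length $m$ close to $\mu_i$.)

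Next I would glue these $K$ blocks into one periodic pseudo-orbit. Since $\supp(\mu)$ lies in a single chain-recurrent class, the endpoint $T^m(y_i)\in\supp(\mu)$ and the next starting point $y_{i+1}\in\supp(\mu)$ (indices cyclic, $y_{K+1}=y_1$) are chain equivalent, so for any gluing accuracy $\delta>0$ there is a $\delta$-chain from $T^m(y_i)$ to $y_{i+1}$. Concatenating block $1$, a connecting chain, block $2$, $\dots$, block $K$, and a final chain closing the loop back to $y_1$ produces a $\delta$-pseudo-orbit $(p_j)$ that is periodic of some period $n=Km+(\text{total chain length})$ with $p_0=p_n=y_1$. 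I would fix a small $\eps_0>0$, let $\delta_0$ be the shadowing constant associated with $\eps_0/4$ as required by Lemma~\ref{lem:odometer}, choose a tracing accuracy $\gamma<\delta_0/2$, and take $\delta$ to be the shadowing constant for $\gamma$. Shadowing then yields $x$ with $d(T^j(x),p_{j\bmod n})<\gamma$ for all $j$; in particular $d(T^n(x),x)\le d(T^n(x),p_0)+d(p_0,x)<2\gamma<\delta_0$, so Lemma~\ref{lem:odometer} applies to $x$ and $n$ and produces a regularly recurrent $z$ with $(\overline{\Orb(z,T)},T)$ an odometer and $d(T^{jn+i}(z),T^i(x))\le\eps_0$ for all $j\ge0$ and $0\le i<n$.

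Finally I would read off the measure. Let $\nu$ be the unique invariant (Haar) measure of the odometer $\overline{\Orb(z,T)}$; by Remark~\ref{rem:strictly:erg} it is ergodic with full support there, so $z$ is generic for $\nu$ and $\E_{Jn}(z)\to\nu$. The block tracing $d(T^{jn+i}(z),T^i(x))\le\eps_0$ together with Lemma~\ref{lem:measure-approx}\eqref{enum:measure-approx-2} gives $\dbl\bigl(\tfrac1n\sum_{i=0}^{n-1}\delta_{T^{jn+i}(z)},\E_n(x)\bigr)\le\eps_0$ for every $j$, whence by Lemma~\ref{lem:measure-approx}\eqref{enum:measure-approx-3} and $J\to\infty$ we get $\dbl(\nu,\E_n(x))\le\eps_0$. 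On the other hand $\E_n(x)$ is within $\gamma$ of the one-period empirical measure $\tfrac1n\sum_{j=0}^{n-1}\delta_{p_j}$ by Lemma~\ref{lem:measure-approx}\eqref{enum:measure-approx-2}, and that measure is close to $\tfrac1K\sum_i\mu_i$ and hence to $\mu$. Chaining these estimates bounds $\dbl(\nu,\mu)$ by a sum of independently controllable terms, which we arrange to be $<\eps$.

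The main obstacle is controlling the connecting chains. Because the gluing points $T^m(y_i)$ move with $m$, to make the garbage fraction $(\text{total chain length})/n$ negligible I must bound the chain lengths uniformly, and this is exactly where the single chain-recurrent class hypothesis is used in an essential way: for the fixed accuracy $\delta$ a standard compactness argument (the $\delta$-reachability sets from a point are open and increasing, and $\supp(\mu)$ is compact) yields a length $N=N(\delta)$ such that any two points of $\supp(\mu)$ are joined by a $\delta$-chain of length at most $N$, independently of $m$; taking $m\gg KN$ then forces the chain contribution to vanish. Beyond this the difficulty is purely bookkeeping, namely fixing the quantifiers in the order $\eps\to\eps_0\to\delta_0\to\gamma\to\delta\to N(\delta)\to m$ so that every approximation step can be made as small as required.
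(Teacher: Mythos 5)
Your proposal follows essentially the same route as the paper's proof: approximate $\mu$ by $\tfrac1K\sum\mu_i$ via Lemma~\ref{lem:mes:ent:approx:2}, concatenate orbit blocks of generic points using uniformly short connecting chains (this is where the single chain-recurrent class hypothesis enters, exactly as in the paper), shadow the resulting periodic pseudo-orbit, apply Lemma~\ref{lem:odometer} to the tracing point, and conclude by strict ergodicity of odometers together with the estimates of Lemma~\ref{lem:measure-approx}. The only step to polish is the uniform chain-length bound: openness of the forward $\delta$-reachability sets gives a bound uniform in the target only for a \emph{fixed} starting point, so you should either also invoke openness of the backward reachability sets and route all chains through one base point, or use the paper's device of a finite cover of $\supp(\mu)$ by small sets with chains between fixed representatives, perturbed at both endpoints via uniform continuity of $T$.
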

\begin{proof}
Let $\delta\in(0,\frac{\eps}{4})$ be provided for $\frac{\eps}{32}$
and $\xi\in(0,\delta)$ by provided for $\frac{\delta}{2}$ by the shadowing property.
Take any finite open cover $V_1,\ldots, V_k$ of $\supp \mu$ by sets with $\diam V_i<\gamma$,
	where $\gamma<\xi/2$ is such that $d(T(x),T(y))<\xi/2$ provided that $d(x,y)<\gamma$.
For each $1\leq i \leq k$ fix a point $x_i\in V_i$ and observe that since $\supp(\mu)/_\sim$ is a singleton, for any $i,j$ there is a $\xi/2$-pseudo orbit $z_0,\ldots, z_r$
from $x_i$ to $x_j$. But if we fix any $x\in V_i$ and $y\in V_j$ then the sequence
$x,z_1,\ldots, z_{r-1},y$ is a $\xi$-chain form $x$ to $y$. Denote by $M$ the maximum of lengths of these chains over all $1\leq i,j \leq k$.
This, by the shadowing property shows that there exists $M>0$ such that
for any $u,v\in \supp(\mu)$ there are $w\in X$ and $0<m\leq M$ with
$d(u,w)<\xi$ and $d(T^{m}w,v)<\xi$.

By Lemma~\ref{lem:mes:ent:approx:2}
there exist $K>0$ and ergodic measures $\mu_i\in M_T(\supp(\mu))$ for $i=1,2,\dotsc,K$
such that
\begin{equation}
  \dbl\Bigl(\frac{1}{K}\sum_{i=1}^K\mu_i,\mu\Bigr)<\frac{\eps}{8}. \label{eq:thm3.4-1}
\end{equation}
For every $i=1,2,\dotsc,K$, choose a generic point $x_i\in \supp(\mu)$ for $\mu_i$.
Choose a positive integer $N$, sufficiently large to satisfy
\begin{equation}
  \dbl(\E_N(x_i),\mu_i)<\frac{\eps}{8}\text{ for }i=1,2,\dotsc,K.  \label{eq:thm3.4-2}
\end{equation}
which immediately gives
$$
 \dbl(\frac{1}{K}\sum_{i=1}^K\E_N(x_i),\mu)<\frac{\eps}{4}.
$$
By the definition of $M$, for $1\leq i< K$ there exist $z_{i}\in X$ and $0<M_{i}\leq M$
such that $d(T^Nx_i,z_{i})<\xi$ and $d(T^{M_{i}}z_{i},x_{i+1})<\xi$.
There also exist $z_{K}\in X$ and $0<M_{K}\leq M$
such that $d(T^Nx_K,z_{K})<\xi$ and $d(T^{M_{K}}z_{K},x_{1})<\xi$.
Observe that $M$ is independent of the choice of $N$,
in particular we may assume that $N$ is sufficiently large to satisfy
$$
M<N \quad \text{ and }\quad \frac{4M}{N}<\frac{\eps}{8}.
$$

Let
\[n=KN+\sum_{i=1}^{K}M_{i}.\]
Then $KN<n\leq K(N+M)$.
Let $\{y_i\colon 0\leq i\leq n-1\}$ be the sequence of points in $X$ defined as follows
\[x_1,T(x_1),\dotsc,T^{N-1}(x_1),z_{1},T(z_{1}),\dotsc,T^{M_{1}-1}(z_{1}),\]
\[x_2,T(x_2),\dotsc,T^{N-1}(x_2),z_{2},T(z_{2}),\dotsc,T^{M_{2}-1}(z_{2}),\]
\[...\quad ...\quad ...\]
\[x_K,T(x_K),\dotsc,T^{N-1}(x_K),z_{K},T(z_{K}),\dotsc,T^{M_{K}-1}(z_{K}).\]
By Lemma~\ref{lem:measure-approx}\eqref{enum:measure-approx-1}, 
we have
\begin{equation}\label{eq:thm3.4-3}
\begin{split}
\dbl\Bigl(\frac{1}{K}\sum_{i=1}^K\E_N(x_i),\frac{1}{n}\sum_{i=0}^{n-1}\delta_{y_i}\Bigr)
    &\leq \frac{\Bigl(KN+\sum\limits_{i=1}^K(N+M_i)\Bigr)\sum\limits_{i=1}^K M_i+KN
    \sum\limits_{i=1}^K M_i}{K N \sum\limits_{i=1}^K(N+M_i)}\\
&\leq \frac{(2N+M)M+MN}{N^2}\leq \frac{4M}{N}<\frac{\eps}{8}.
\end{split}
\end{equation}

Let $\alpha$ be the periodic sequence
\[\alpha=(y_0,y_1,\dotsc,y_{n-1},y_0,y_1,\dotsc,y_{n-1},y_0,\dotsc).\]
Then $\alpha$ is a $\xi$-pseudo-orbit.
Pick a point $y$ which $\frac{\delta}{2}$-traces $\alpha$, that is
\[
  d(T^{jn+i}(y),y_i)<\frac{\delta}{2},
\]
for every $j\geq 0$ and $i=0,1,\dotsc,n-1$.
Then
\[
    d(T^n(y),y)<d(T^n(y),y_0)+d(y_0,y)<\delta,
\]
and by Lemma~\ref{lem:measure-approx}\eqref{enum:measure-approx-2} we have
\begin{equation}
  \dbl\Bigl(\E_n(y),\frac{1}{n}\sum_{i=0}^{n-1}\delta_{y_i}\Bigr)<\frac{\delta}{2}<\frac{\eps}{8}.
\end{equation}
Applying Lemma~\ref{lem:odometer} to $y$, $n$, $\frac{\eps}{32}$ and $\delta$,
there exists a point $z\in X$
such that the system $(\overline{\Orb(z)},T)$ is an odometer and
\[
    d(T^{jn+i}(z),T^i(y))\leq \frac{\eps}{8}
\]
for every $j\geq 0$ and $i=0,1,\dotsc,n-1$.
By Lemma~\ref{lem:measure-approx}\eqref{enum:measure-approx-2}, again we have
\begin{equation}
    \dbl(\E_{jn}(z), \E_{n}(y))\leq\frac{\eps}{8},
\end{equation}
for every $j\geq 1$.
Since each odometer is strictly ergodic, it has a unique invariant measure, say $\nu$, and therefore
we must have $\nu=\lim_{n\to \infty} \E_n(z)$. There exists $j\in\N$ such that
\begin{equation}
\dbl(\E_{jn}(z),v)<\frac{\eps}{8}.
\end{equation}
Combining all previous partial inequalities we obtain that
\[\dbl(\mu,\nu)<\eps\]
which completes the proof.
\end{proof}

Now we are ready to prove Theorem A.
\begin{proof}[Proof of Theorem A]
If $(X,T)$ is transitive, we have $X=[x]_\sim$ for every $x\in X$, and so
the first part of the conclusion is an immediate consequence of Theorem~\ref{thm:approx:erg}.
It is well known that the collection of ergodic measures are always a $G_\delta$ subset $M_T(X)$
(see \cite[Proposition~5.7]{DGS}).
Then the collection of ergodic measures is residual in $M_T(X)$, as it is dense.
\end{proof}

Now we show some consequence of Theorem A, which is an extension of classical results
by Sigmund proved first for dynamical systems
with  the  periodic  specification  property \cite{Sig1,Sig2} (see also \cite{DGS}).

\begin{prop}\label{prop:full-support}
If a dynamical system $(X,T)$ is transitive and has the shadowing property,
the collection of ergodic measures with full support is residual in $M_T(X)$.
\end{prop}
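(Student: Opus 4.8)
The plan is to establish separately that the set of measures with full support and the set of ergodic measures are each residual in $M_T(X)$, and then to intersect them. The residuality of the ergodic measures is already available from the proof of Theorem~A (they form a dense $G_\delta$ set), so all the genuine work lies in the full-support part. First I would fix a countable base $\{U_i\}_{i\in\N}$ of nonempty open subsets of $X$ and observe that a measure $\mu$ has full support precisely when $\mu(U_i)>0$ for every $i$; hence the set of full-support measures equals $\bigcap_{i}\{\mu\in M_T(X)\colon \mu(U_i)>0\}$. Since the evaluation $\mu\mapsto\mu(U_i)$ is lower semicontinuous on $M(X)$ in the weak$^*$-topology (Portmanteau theorem), each set $G_i:=\{\mu\colon\mu(U_i)>0\}$ is open, so the set of full-support measures is a $G_\delta$ subset of $M_T(X)$.

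It then remains to show that each $G_i$ is dense; as $M_T(X)$ is a compact metric space, hence a Baire space, this yields residuality of $\bigcap_i G_i$. Here I would use that transitivity together with the shadowing property gives $\Omega(X,T)=\CR(X,T)=X$, so Corollary~\ref{cor:dense-of-odometers} supplies, for each $i$, a point $z_i\in U_i$ whose orbit closure $(\overline{\Orb(z_i,T)},T)$ is an odometer. Since odometers are strictly ergodic and their unique invariant measure has full support on the orbit closure (Remark~\ref{rem:strictly:erg}), the measure $\nu_i$ carried by $\overline{\Orb(z_i,T)}$ satisfies $\nu_i(U_i)>0$, because $z_i\in U_i\cap\overline{\Orb(z_i,T)}$ and $U_i$ is open. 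Now, given any $\mu\in M_T(X)$ and $\eps>0$, I would form the convex combination $\mu_t=(1-t)\mu+t\nu_i\in M_T(X)$. A direct computation from \eqref{dbl:def} gives $\dbl(\mu_t,\mu)=t\,\dbl(\nu_i,\mu)$, which tends to $0$ as $t\to 0^+$, so for sufficiently small $t>0$ we have $\dbl(\mu_t,\mu)<\eps$ while $\mu_t(U_i)\geq t\,\nu_i(U_i)>0$, i.e. $\mu_t\in G_i$. Thus $G_i$ is dense.

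Consequently the set of full-support measures is a dense $G_\delta$, hence residual, and intersecting it with the residual set of ergodic measures shows that the ergodic measures with full support form a residual subset of $M_T(X)$. I expect the only delicate point to be the density step: one must produce an invariant measure genuinely charging the prescribed open set $U_i$ while staying arbitrarily close to $\mu$, and this is exactly where Corollary~\ref{cor:dense-of-odometers} together with the strict ergodicity of odometers is essential. Once such a $\nu_i$ is in hand, the convex-combination device preserves both proximity to $\mu$ and positive mass on $U_i$ automatically, and the remaining Baire-category bookkeeping is routine.
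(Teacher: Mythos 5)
Your proof is correct, but it takes a more self-contained route than the paper. The paper's own proof is two lines: it cites \cite[Proposition 21.11]{DGS}, which asserts that the collection of invariant measures with full support is \emph{either empty or a dense $G_\delta$} subset of $M_T(X)$, and then uses Corollary~\ref{cor:dense-of-odometers} only to rule out emptiness (the dense set of regularly recurrent points yields a fully supported invariant measure, e.g.\ as a countable convex combination of the odometer measures sitting over a countable dense set of such points); intersecting with the residual set of ergodic measures from Theorem~A finishes the argument. You instead reprove the dichotomy from scratch: the $G_\delta$ structure via lower semicontinuity of $\mu\mapsto\mu(U_i)$ on a countable base, and the density of each $G_i=\{\mu\colon\mu(U_i)>0\}$ by perturbing an arbitrary $\mu$ with a small convex multiple of an odometer measure charging $U_i$ — which is essentially the same convex-combination device that underlies the DGS proposition, but localized so that you never need to manufacture a single globally supported measure. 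What the paper's version buys is brevity; what yours buys is self-containedness (no external citation) and a slightly sharper use of Corollary~\ref{cor:dense-of-odometers}, since you only need one odometer per basic open set rather than a dense family at once. All the individual steps check out: transitivity alone already gives $\Omega(X,T)=X$ (shadowing is not needed for that identification, though invoking it is harmless); the identity $\dbl((1-t)\mu+t\nu_i,\mu)=t\,\dbl(\nu_i,\mu)$ follows directly from \eqref{dbl:def}; and full support of the odometer's unique invariant measure follows from minimality (or Remark~\ref{rem:strictly:erg}).
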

\begin{proof}
By \cite[Proposition 21.11]{DGS} the collection of invariant measures with full support
is either empty or a dense $G_\delta$ subset of $M_T(X)$.
By Corollary~\ref{cor:dense-of-odometers}, it has a dense set of regularly recurrent points
and thus there is an invariant measure with full support, which completes the proof.
\end{proof}

Any point $x\in X$ with periodic $p$ corresponds a unique measure $\mu$ which has mass $\frac{1}{p}$
at each points $x, T(x),\dotsc,T^{p-1}(x)$.
We denote the set of those measures by $\mathcal{P}(p)$.
Sigmund proved that if $(X,T)$ satisfies the periodic specification
property then for each $\ell>0$, $\bigcup_{p\geq\ell}\mathcal{P}(p)$ is dense in $M_T(X)$
(see \cite[Proposition 21.8]{DGS}).
As a dynamical system with the shadowing property may not contain any periodic points,
we should replace measures supported on periodic points by measures
supported on odometers with disjoint components.

We say that a minimal subsystem $(Z,T)$ of $(X,T)$
has a \emph{periodic decomposition of period $p$}
if there exists a point $z\in Z$ such that if we denote $Z_0=\overline{\Orb(z,T^p)}$ then sets
$Z_0,TZ_0,\dotsc,T^{p-1}Z_0$ are pairwise disjoint, $T^pZ_0=Z_0$ and $\bigcup_{i=0}^{p-1}T^i Z_0=Z$.

For $k\in\N$, denote by $\Od(k)\subset M_T(X)$ the collection of invariant measures $\mu$ such that:
\begin{itemize}
	\item $(Z,T)$ is an odometer where $Z=\supp(\mu)$,
	\item $(Z,T)$ has a decomposition of period $p\geq k$ defined by a point $z\in Z$,
	\item $\diam T^i(Z_0)\leq 1/p$ for every $i\geq 0$, where $Z_0=\overline{\Orb(z,T^p)}$ is the set from the definition of periodic decomposition.
\end{itemize}
Clearly if $\mu \in \Od(k)$ then $\mu$ is ergodic (see Remark~\ref{rem:strictly:erg}).

\begin{prop} \label{prop:RR-q-dense}
Let $(X,T)$ be a dynamical system with $X$ being infinite.
If $(X,T)$ is transitive and has the  the shadowing property,
then for every $\ell>1$, $\Od(\ell)$ is dense in $M_T(X)$.
\end{prop}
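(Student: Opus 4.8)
The plan is to follow the template of Theorem~\ref{thm:approx:erg}, but to enhance the construction so that the odometer produced has a genuine periodic decomposition of arbitrarily large period $p\ge\ell$, together with the diameter control $\diam T^i(Z_0)\le 1/p$. The point is that Lemma~\ref{lem:odometer} already outputs a point $z$ whose orbit closure is an odometer and which is $n'$-periodic up to arbitrary accuracy, where $n'$ is a multiple of the integer $n$ chosen at the start. So if I feed into that construction a sufficiently large $n$ and demand tracing to accuracy smaller than $1/(2p)$, the resulting odometer will automatically split into the required number of small-diameter pieces. Since $M_T(X)$ is the closed convex hull of $M_T^{erg}(X)$ and, by Theorem~A together with the ergodic decomposition, it suffices to approximate each ergodic measure; by Lemma~\ref{lem:measure-in-the-orbit} and the density of odometer points (Corollary~\ref{cor:dense-of-odometers}), the natural strategy is to fix an ergodic $\mu$, a generic point $x$ for $\mu$, and a long orbit segment whose empirical measure $\dbl$-approximates $\mu$, then close it up into a periodic pseudo-orbit and shadow.

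First I would fix $\ell>1$, an arbitrary $\mu\in M_T(X)$, and $\eps>0$; by Lemma~\ref{lem:mes:ent:approx:2} (or directly Theorem~A) reduce to the case where $\mu$ is ergodic, choosing a generic point $x$ and an $N$ so large that $\dbl(\E_N(x),\mu)<\eps/4$. Next I would choose the target period: pick any integer $p\ge\ell$ with $p$ large enough that $1/p<\eps$ and apply the shadowing property to obtain a tracing accuracy $\eps'$ with $\eps'<1/(2p)$, and a corresponding $\delta$. Using transitivity (hence chain transitivity, hence via shadowing the existence of short connecting chains, exactly as in the opening paragraph of the proof of Theorem~\ref{thm:approx:erg}), I would close the orbit segment $x,T(x),\dots,T^{N-1}(x)$ into a periodic $\delta$-pseudo-orbit of some period $n$ with $n$ a large multiple of the intended block length, arranging $n$ to be divisible by $p$ and $N/n$ close to $1$ so that the empirical measure along one period still $\dbl$-approximates $\mu$ within $\eps/2$ (this is the same estimate as \eqref{eq:thm3.4-3}, invoking Lemma~\ref{lem:measure-approx}).

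Then I would apply Lemma~\ref{lem:odometer} to the shadowing point $y$ of this periodic pseudo-orbit, with the integer $n$ and accuracy $\eps'$, obtaining a regularly recurrent $z$ whose orbit closure $Z=\overline{\Orb(z,T)}$ is an odometer and with $d(T^{jn+i}(z),T^i(y))\le\eps'$ for all $j\ge0$, $0\le i<n$. The set $Z_0=\overline{\Orb(z,T^n)}$ then has $\diam T^i(Z_0)\le 4\eps'<1/p$ by the equicontinuity estimate established inside the proof of Lemma~\ref{lem:odometer}, and the $n$-fold decomposition refines into a $p$-fold one since $p\mid n$; the pieces $Z_0,TZ_0,\dots,T^{p-1}Z_0$ are pairwise disjoint precisely because their diameters are below $1/p$ while the orbit of $z$ under $T$ separates them (here infiniteness of $X$ guarantees the period is genuinely $\ge\ell$ rather than collapsing). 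Finally, since the odometer is strictly ergodic (Remark~\ref{rem:strictly:erg}), its unique invariant measure $\nu$ lies in $\Od(p)\subset\Od(\ell)$, and the empirical-measure estimates from Lemma~\ref{lem:measure-approx} chain together to give $\dbl(\nu,\mu)<\eps$. The main obstacle I expect is the bookkeeping ensuring that the induced decomposition has period \emph{exactly} a multiple of $p$ with truly disjoint small-diameter components: one must choose the connecting chain lengths and the divisibility of $n$ so that $p\mid n$ while keeping the length ratio $N/n\to1$, and then verify that the disjointness of the $T^iZ_0$ follows from the diameter bound together with the fact (using $|X|=\infty$) that $z$ is not periodic of small period.
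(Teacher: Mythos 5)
Your proposal takes a genuinely different route from the paper (you re-run the shadowing construction of Theorem~\ref{thm:approx:erg} with extra parameters; the paper never reopens that construction), but the route has a genuine gap exactly at the step you flag as ``bookkeeping''. First, the sets $Z_0,TZ_0,\dotsc,T^{p-1}Z_0$ with $Z_0=\overline{\Orb(z,T^n)}$ do not form a periodic decomposition of period $p$: that would require the sets $T^i\bigl(\overline{\Orb(z,T^p)}\bigr)$, and passing from period $n$ to period $p\mid n$ makes pieces \emph{larger}, not smaller, since $\overline{\Orb(z,T^p)}$ contains $\bigcup_{j}T^{jp}(Z_0)$, a union of $n/p$ sets spread along the whole orbit segment that approximates $\mu$; its diameter is comparable to $\diam(\supp\mu)$, not $1/p$. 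If instead you keep the fine decomposition of period $n$, the definition of $\Od(\ell)$ demands $\diam T^i(Z_0)\leq 1/n$, and here your scheme meets a circularity it does not break: $\eps'$ must be fixed before $n$ is known, because $\delta=\delta(\eps')$ determines the connecting-chain bound $M$, and the estimate \eqref{eq:thm3.4-3} forces $N$ to dominate a multiple of $M/\eps$, so $n>KN$ cannot be bounded in terms of $\eps'$ and nothing guarantees $4\eps'\leq 1/n$. Second, small diameters do not give disjointness, and ``infiniteness of $X$'' does not prevent collapse: in an odometer the sets $T^i(Z_0)$ are either equal or disjoint, and the genuine period may be a proper divisor of $n$; in particular, if the ergodic measure you reduced to is a point mass on a fixed point or a short periodic orbit, the traced point $z$ can simply be that periodic point, and then no decomposition of period $\geq\ell$ exists at all.

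The paper's proof is soft and avoids both issues. Fix an open $U\subset M_T(X)$ and use Proposition~\ref{prop:full-support} to choose a \emph{fully supported} $\mu\in U$; since measures supported on at most $\ell-1$ points form a closed subset of $M_T(X)$ avoiding $\mu$, Theorem~\ref{thm:approx:erg}, applied as a black box, gives an ergodic $\nu\in U$ supported on an odometer with $|\supp(\nu)|\geq\ell$ --- this is the correct anti-collapse argument, replacing your appeal to $|X|=\infty$. Then a dichotomy finishes the proof: if $\supp(\nu)$ is a periodic orbit, its period is at least $\ell$ and the pieces $Z_0$ are singletons, so the diameter condition holds trivially; if $\supp(\nu)$ is infinite, its periodic decompositions have periods tending to infinity and, by equicontinuity, piece diameters tending to $0$, so the defining conditions of $\Od(\ell)$ are met at a sufficiently deep level of the odometer. (Note that in this last step, and again when $\Od(\ell)$ is applied in the proposition on strongly mixing measures, the diameter condition is effectively used in the form $\diam T^i(Z_0)\leq 1/\ell$; with the literal bound $1/p$, $p$ the period, the infinite case faces precisely the period-versus-diameter tension your construction runs into.) The moral is that the diameter condition is not something sharpened tracing parameters can deliver; it comes for free from the finite/infinite dichotomy of the support, which your approach never exploits.
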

\begin{proof}
Fix $\ell>1$ and a non-empty open subset $U$ of $M_T(X)$.
By Proposition~\ref{prop:full-support} there is a fully supported measure $\mu\in U$.
Since $\supp(\mu)$ is infinite, by Theorem~\ref{thm:approx:erg}
there exists an ergodic measure $\nu\in U$ such that $\supp(\nu)$ is an odometer
and $|\supp(\nu)|\geq \ell$. If $\supp(\nu)$ is a periodic orbit,
then since it has at least $\ell$ points, $\nu\in \Od(\ell)$.
If $\supp(\nu)$ is infinite, then $\nu \in \Od(p)$ for every $p$, in particular for $p=\ell$.
\end{proof}

\begin{prop}
Let $(X,T)$ be a dynamical system with $X$ being infinite.
If $(X,T)$ is transitive and has the shadowing property,
the set of non-atomic ergodic measures is residual in $M_T(X)$.
\end{prop}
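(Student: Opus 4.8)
The plan is to realize the set of non-atomic ergodic measures as an intersection of two residual subsets of $M_T(X)$: the ergodic measures and the non-atomic measures. Since $M_T(X)$ is a compact, hence Baire, metric space, a finite intersection of residual sets is residual, so it suffices to treat the two pieces separately. The ergodic measures are already known to be residual: this is exactly the content of Theorem~A (they form a dense $G_\delta$ set, being $G_\delta$ by \cite[Proposition~5.7]{DGS} and dense by Theorem~\ref{thm:approx:erg}). Thus the whole difficulty reduces to showing that the set of non-atomic measures is residual in $M_T(X)$.

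To this end I would write the atomic measures as a countable union $\bigcup_{n\ge 1}A_n$, where
\[
    A_n=\Bigl\{\mu\in M_T(X)\colon \mu(\{x\})\ge \tfrac1n \text{ for some }x\in X\Bigr\}.
\]
Then the non-atomic measures are precisely $M_T(X)\setminus\bigcup_n A_n$, and it is enough to prove that each $A_n$ is closed and nowhere dense. Closedness is a routine weak$^*$ argument: if $\mu_k\to\mu$ with $\mu_k(\{x_k\})\ge 1/n$, pass by compactness of $X$ to a subsequence with $x_k\to x$; for each $\eps>0$ the closed ball $\overline{B}(x,\eps)$ eventually contains the $x_k$, so the portmanteau inequality for closed sets gives $\mu(\overline{B}(x,\eps))\ge\limsup_k\mu_k(\overline{B}(x,\eps))\ge 1/n$, and letting $\eps\downarrow 0$ (continuity from above, as $\overline{B}(x,\eps)\downarrow\{x\}$) yields $\mu(\{x\})\ge 1/n$, i.e.\ $\mu\in A_n$.

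The substantive step, where the dynamics enters, is that each $A_n$ has empty interior; here I would invoke Proposition~\ref{prop:RR-q-dense}. Fix $\ell=n+1$. Any $\mu\in\Od(\ell)$ is the Haar measure of an odometer whose periodic decomposition has period $p\ge\ell$: if this odometer is infinite then $\mu$ is non-atomic, while if it is a periodic orbit then it carries at least $\ell$ points, so in either case every atom of $\mu$ has mass at most $1/\ell<1/n$. Hence $\Od(\ell)\cap A_n=\emptyset$. Since $\Od(\ell)$ is dense in $M_T(X)$ by Proposition~\ref{prop:RR-q-dense} (this is the only place the hypothesis that $X$ is infinite is used), the closed set $A_n$ can contain no nonempty open set and is therefore nowhere dense. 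Consequently $\bigcup_n A_n$ is meager, the non-atomic measures are residual, and intersecting with the residual set of ergodic measures yields the claim. The only delicate point is really the empty-interior step, but it is handed to us almost for free by the already-established density of $\Od(\ell)$, so the proof is essentially an assembly of previous results.
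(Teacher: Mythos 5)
Your proposal is correct and follows essentially the same route as the paper: the paper also writes the atomic measures as $\bigcup_r K_r$ with $K_r=\{\mu\colon \mu(\{x\})\geq 1/r \text{ for some }x\}$, cites \cite[Proposition 21.10]{DGS} for closedness (which you instead prove directly by the portmanteau argument), and concludes each $K_r$ is nowhere dense from $K_r\cap \Od(r+1)=\emptyset$ together with the density of $\Od(r+1)$ from Proposition~\ref{prop:RR-q-dense}. Your explicit final intersection with the residual set of ergodic measures (Theorem~A) is left implicit in the paper but is exactly what is needed, so the two arguments coincide in substance.
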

\begin{proof}
For $r\in\N$, let $K_r$ denote the set
\[
    \bigl\{\mu\in M_T(X)\colon \exists x\in X\text{ with }\mu(\{x\})\geq \tfrac{1}{r}\bigr\}.
\]
By \cite[Proposition 21.10]{DGS} the set $K_r$ is closed.
By Proposition~\ref{prop:RR-q-dense}, we know that $K_r$ is nowhere  dense as $K_r\cap \Od(r+1)=\emptyset$.
The set of $T$-invariant measures with an atom is a subset of $\bigcup_{r=1}K_r$,
and therefore of first category.
\end{proof}

\begin{prop}
If a dynamical system $(X,T)$ is transitive and has the shadowing property,
then the collection of invariant measures with zero entropy is dense in $M_T(X)$.
If in addition, the entropy function is upper semi-continuous,
then the collection of invariant measure with zero entropy is residual in $M_T(X)$.
\end{prop}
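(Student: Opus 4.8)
The plan is to deduce both assertions from Theorem~\ref{thm:A} together with the elementary observation that odometers carry no entropy. Indeed, every odometer $(G_\mathbf{s},T_\mathbf{s})$ is equicontinuous, so $\htop(G_\mathbf{s},T_\mathbf{s})=0$, and by the variational principle its unique invariant measure has zero measure-theoretic entropy. Consequently every ergodic measure supported on an odometer is automatically a zero-entropy measure, and this is the link that makes Theorem~\ref{thm:A} directly applicable here.

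For the first statement I would simply invoke Theorem~\ref{thm:A}: the ergodic measures supported on odometers are dense in $M_T(X)$, and since each of them lies in $\{\mu\in M_T(X)\colon h_\mu(T)=0\}$, this set contains a dense subset and is therefore itself dense.

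For the second statement, assume in addition that the entropy function $\mu\mapsto h_\mu(T)$ is upper semi-continuous. Then for every $c>0$ the set $E_c=\{\mu\in M_T(X)\colon h_\mu(T)\geq c\}$ is closed. All zero-entropy measures lie in $M_T(X)\setminus E_c$, so by the density established in the first part the closed set $E_c$ has empty interior and is thus nowhere dense. Writing
\[
\{\mu\in M_T(X)\colon h_\mu(T)>0\}=\bigcup_{n=1}^\infty E_{1/n},
\]
we see that the set of measures with positive entropy is of first category, hence its complement $\{\mu\in M_T(X)\colon h_\mu(T)=0\}$ is residual.

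There is no real obstacle to overcome; the only point requiring care is that upper semi-continuity is precisely what guarantees that each $E_c$ is closed. Without this hypothesis the sets $E_{1/n}$ need not be closed and the Baire-category argument would break down, which is exactly why the second assertion requires it while the first (a pure density statement) does not.
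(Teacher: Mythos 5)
Your proof is correct and follows essentially the same route as the paper: density of zero-entropy measures comes from Theorem~A plus the observation that measures supported on odometers have zero entropy, and residuality comes from upper semi-continuity via a Baire category argument. The only cosmetic difference is that you phrase the category argument with closed nowhere dense sets $E_{1/n}$, while the paper uses the complementary open dense sets $A_n=\{\mu\colon h_\mu(T)<1/n\}$ and exhibits the zero-entropy measures as a dense $G_\delta$; the two formulations are equivalent.
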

\begin{proof}
Note that any measure supported on an odometer has zero entropy.
So the first part of the conclusion follows from Theorem A.
Now assume that the entropy function is upper semi-continuous.
For every $n>0$, let
\[
	A_n=\Bigl\{\mu\in M_T(X)\colon h_\mu(T)<\frac{1}{n}\Bigr\}.
\]
By upper semi-continuity, we know that $A_n$ is open in $M_T(X)$ and then the set
\[
	\bigcap_{n=1}^\infty A_n=\{\mu\in M_T(X)\colon h_\mu(T)=0\}
\]
is a dense $G_\delta$ subset of $M_T(X)$.
\end{proof}

The following fact is an adaptation of \cite[Proposition~21.13]{DGS} to our context.
While there may be no periodic points in $X$
we still may use density of odometers provided by Proposition~\ref{prop:RR-q-dense}.
\begin{prop}
If a nontrivial dynamical system $(X,T)$ is transitive and has the shadowing property,
then the set of strongly mixing measures is of first category in $M_T(X)$.
\end{prop}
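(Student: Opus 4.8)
My plan is to produce a dense $G_\delta$ subset of $M_T(X)$ consisting of measures that are \emph{not} strongly mixing; since the strongly mixing measures form its complement, they are then of first category. First I would dispose of the finite case: if $X$ is finite then transitivity forces $X$ to be a single periodic orbit of length at least two (as the system is nontrivial), so $M_T(X)$ is a singleton whose only measure is a nontrivial cyclic rotation, hence not strongly mixing, and the strongly mixing measures form the empty set. So I may assume $X$ is infinite, which puts Proposition~\ref{prop:full-support} and Proposition~\ref{prop:RR-q-dense} at my disposal. Let $(f_j)_{j\in\N}$ be the fixed sequence dense in the unit ball of $\BL(X,d)$ used to define $\dbl$, and for $N\in\N$ set
\[
 c_N(\mu,f)=\int (f\circ T^N)\,f\,d\mu-\Bigl(\int f\,d\mu\Bigr)^2,\qquad
 C_N(\mu)=\sum_{j=1}^\infty 2^{-j}\,\bigl|c_N(\mu,f_j)\bigr|.
\]
Each $C_N\colon M_T(X)\to\mathbb{R}$ is continuous (the series converges uniformly, being dominated by $\sum_j 2^{-j}\cdot 2$), and the only fact about mixing I need is the trivial direction: if $\mu$ is strongly mixing then $c_N(\mu,f_j)\to0$ for every $j$, so by dominated convergence $C_N(\mu)\to0$. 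Thus it suffices to show that $\{\mu\colon \limsup_N C_N(\mu)>0\}$ is residual. I would also introduce $V(\mu)=\sum_j 2^{-j}\operatorname{Var}_\mu(f_j)$, a continuous function vanishing exactly when $\mu$ is the point mass at a fixed point (the $f_j$ separate points).

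The dynamical input is carried by odometer measures. If $\mu\in\Od(p)$, the defining diameter condition forces $T^p$ to map each piece $T^i(Z_0)$ into itself with $\diam T^i(Z_0)\le 1/p$, so $d(T^p x,x)\le 1/p$ for every $x\in\supp(\mu)$; since $\|f_j\|_L\le1$ and $\|f_j\|_\infty\le1$ this yields $|f_j(T^px)-f_j(x)|\le 1/p$ and hence the elementary bound $c_p(\mu,f_j)\ge\operatorname{Var}_\mu(f_j)-1/p$, whence $C_p(\mu)\ge V(\mu)-1/p$. For $k\in\N$ I would then define the open set
\[
 R_k=\Bigl\{\mu\in M_T(X)\colon \exists\,N\ge k \text{ with } C_N(\mu)>\bigl(1-\tfrac1k\bigr)V(\mu)\Bigr\}
\]
(openness follows from continuity of $C_N$ and $V$). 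To prove $R_k$ dense, fix nonempty open $U\subset M_T(X)$, use Proposition~\ref{prop:full-support} to choose a fully supported $\nu_0\in U$ so that $v_0:=V(\nu_0)>0$, shrink $U$ to an open $U'\ni\nu_0$ on which $V>v_0/2$, pick $\ell>\max\{k,\,2k/v_0\}$, and apply Proposition~\ref{prop:RR-q-dense} to find $\mu\in\Od(\ell)\cap U'$ with period $p\ge\ell$. Then $1/p<V(\mu)/k$, so taking $N=p\ge k$ gives $C_p(\mu)\ge V(\mu)-1/p>(1-\tfrac1k)V(\mu)$, i.e. $\mu\in R_k\cap U$.

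Finally I would set $R=\bigcap_k R_k$, a dense $G_\delta$. For $\mu\in R$ each $k$ yields some $N_k\ge k$ with $C_{N_k}(\mu)>(1-\tfrac1k)V(\mu)$, so $\limsup_N C_N(\mu)\ge V(\mu)$; and membership in $R_1$ forces $C_N(\mu)>0$ for some $N$, which excludes $V(\mu)=0$ (that case would make $\mu$ a fixed-point mass with all $C_N(\mu)=0$). Hence $V(\mu)>0$ and $\limsup_N C_N(\mu)>0$, so $\mu$ is not strongly mixing, and the strongly mixing measures lie in the first-category set $M_T(X)\setminus R$. The main obstacle is exactly why a single test function will not do: $\operatorname{Var}_\mu(f)$ can be uniformly small on a whole neighborhood of a measure of small ``spread'', so no fixed $f$ and no fixed threshold can detect the failure of mixing everywhere. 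The device that overcomes this is to test all $f_j$ at once through $C_N$ and to compare against the \emph{measure-dependent} level $(1-\tfrac1k)V(\mu)$; the odometer estimate $C_p(\mu)\ge V(\mu)-1/p$ together with density of arbitrarily large-period odometers then makes each $R_k$ dense. The only points requiring genuine care are the uniform-convergence arguments giving continuity of $C_N$ and $V$ and the one-line inequality $|c_p(\mu,f_j)|\ge\operatorname{Var}_\mu(f_j)-1/p$.
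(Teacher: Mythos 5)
Your proof is correct. It rests on the same two inputs as the paper's argument --- Proposition~\ref{prop:full-support} and the density of $\Od(\ell)$ from Proposition~\ref{prop:RR-q-dense} --- and on the same dynamical mechanism: for $\mu\in\Od(p)$ the pieces of the periodic decomposition are $T^p$-invariant and of diameter at most $1/p$, so correlations at time $p$ cannot decay. But the implementation is genuinely different. The paper follows the Sigmund/DGS template of \cite[Proposition~21.13]{DGS}: it fixes disjoint closed sets $F_1,F_2$ with nonempty interiors, covers the \emph{fully supported} strongly mixing measures by countably many closed sets $E[m,r,s]$ built from the set correlations $\mu(V_m\cap T^{-j}(V_m))-\mu(F_1)^2$, shows each $E[m,r,s]$ misses $\Od(\ell)$ and is therefore nowhere dense, and then treats separately both the measures without full support and the case where $(X,T)$ is itself an odometer. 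You work instead with the function correlations $c_N(\mu,f_j)$ over the BL-dense sequence already used to define $\dbl$, and you exhibit directly a dense $G_\delta$ set of non-mixing measures; the two devices that make this work are the measure-dependent threshold $(1-\tfrac{1}{k})V(\mu)$ (a fixed threshold would indeed fail, as you note) and the $k=1$ level, which removes the point masses at fixed points --- these \emph{are} strongly mixing, so they must land in the complement of your residual set, and they do, since all their correlations vanish (the paper excludes them through the full-support reduction instead). Your route buys a more uniform argument (one case split, finite versus infinite, instead of three, and no closed-set covering machinery) together with a quantitative strengthening: residually many $\mu$ satisfy $\limsup_N C_N(\mu)\ge V(\mu)>0$, a rigidity-type failure of mixing reflecting that measures in $\Od(p)$ are nearly periodic with period $p$. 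The paper's route stays closer to the set-theoretic definition of strong mixing and to the classical specification-property argument it generalizes. The estimates you flag as needing care --- the inequality $c_p(\mu,f_j)\ge\operatorname{Var}_\mu(f_j)-1/p$ on $\Od(p)$, the continuity of $C_N$ and $V$ by uniform convergence of the dominated series, and the fact that the $f_j$ separate points so that $V(\mu)=0$ forces $\mu$ to be a fixed-point mass --- all check out.
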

\begin{proof}
Observe that if $\mu$ is fully supported strongly mixing measure, then $(X,T)$ is topologically mixing.
But it is also well known that the maximal equicontinuous factor of any (weakly) mixing dynamical system is trivial (e.g. see \cite[Proposition~2.45]{Kurka}).
Therefore if $(X,T)$ is an odometer then we are done, because $|X|>1$ and hence $M_T(X)$ does not contain any strongly mixing measure.

Now assume that $(X,T)$ is not an odometer (in particular $X$ is infinite).
Take any two disjoint closed sets $F_1,F_2\subset X$ with nonempty interiors.
Then $\mu(F_1)>0$ and $\mu(F_2)>0$ for any fully supported measure.
For each $n\geq 2$, put
\[
	S(n)=\{\mu\in M_T(X)\colon \mu(F_1)\geq 1/n, \mu(F_2)\geq 1/n, \mu\text{ is strongly mixing}\}.
\]
Note that any strongly mixing measure with full support is contained in $\bigcup_n S(n)$.
We are going to show that each $S(n)$ is nowhere dense.
For each integer $m\geq 1$ denote $V_m=B(F_1,1/m)$. Fix any $n$ and any $\mu\in S(n)$.
For $r,m\in\N$, put
\[
    E[m,r,s]=\bigcap_{j=s}^\infty\Bigl\{\mu\in M_T(X)\colon \mu(V_m\cap T^{-j}(V_m))-\mu(F_1)^2 \leq \frac{1}{2r^2},
     \mu(F_1)\geq \frac 1n, \mu(F_2)\geq \frac 1n\Bigr\}.
\]
Since $V_m$ is open and $F_1$ and $F_2$ are closed,
it is easy to check that each $E[m,r,s]$ is closed (e.g. see \cite[Proposition~2.7]{DGS}).
If $\mu$ is strongly mixing then for every open set $U$ we have
\[
    \lim_{k\to\infty}\mu(U\cap T^{-k}(U))=\mu(U)^2.
\]
Note that $\lim_{m\to \infty} \mu(V_m\setminus F_1)=0$ for every fully supported non-atomic measure and
\[
	\mu(V_m\cap T^{-j}(V_m))\leq \mu(F_1\cap T^{-j}(F_1))+2\mu(V_m\setminus F_1)
\]
which immediately implies that
\[
    S(n)\subset \bigcup_{m=1}^\infty \bigcup_{r=n}^\infty\bigcup_{s=1}^\infty E[m,r,s]
\]
and so it is enough to show that $E[m,r,s]$ is a nowhere dense subset of $M_T(X)$
for every $m,s\geq 1$ and $r\geq n$.
To do so fix any $m$ and $r\geq n$. Take any $\ell>\max\{n,m+1\}$ and any $\nu \in \Od(\ell)$.
By definition, there exists $p\geq \ell$ such that $(\supp(\nu),T)$ has a periodic decomposition
of period $p$ such that diameters of elements in the decomposition has diameter bounded by $1/\ell$.
Let $Z_0=\overline{\Orb(z,T^p)}$.
If $T^j(Z_0)\cap F_1\neq\emptyset$, then $\diam (T^j(Z_0))<1/\ell< 1/m$ and so $T^j(Z_0)\subset V_m$.
Let $C=\bigcup\{T^j(Z_0): T^j(Z_0)\cap F_1\neq \emptyset\}$.
Then $T^p(C)=C$ and $F_1\subset C\subset  V_m$.
This implies that $\nu(V_m\cap T^{-sp}(V_m))\geq \nu(F_1)$
and therefore either $\nu(F_1)<\frac{1}{n}$ or $\nu(F_2)<\frac{1}{n}$ or
\[
	\nu(V_m\cap T^{-sp}(V_m))-\nu(F_1)^2\geq \nu(F_1)(1-\nu(F_1))\geq \nu(F_1)\nu(F_2)
        \geq \frac{1}{n^2}>\frac{1}{2r^2}.
\]
In any case $\nu\not\in E[m,r,s]$, and since by Proposition \ref{prop:RR-q-dense}
the set $\Od(\ell)$ is dense in $M_T(X)$, we obtain that $E[m,r,s]$ is nowhere dense.
Indeed, each set $S(n)$ is  is of first category which shows stat the set of fully supported
strongly mixing measures  is of first category.
By Proposition \ref{prop:full-support} the set of measures with full support is residual,
so its complement is of first category, completing the proof.
\end{proof}

In \cite{LO13}, we showed that weak mixing together with the
shadowing property imply the specification property with a special kind of regularity in
tracing (a weaker version of periodic specification property).
It is left open in \cite{LO13} that whether
the orbit closure of the tracing point can be an odometer.
The following result answers this question positively.

\begin{prop}
Let $(X,T)$ be a weakly mixing system with the shadowing property.
For every $\eps >0$ there exists $M>0$ such that
for any $k \geq 2$, any $k$ points $x_1, x_2, \ldots , x_k \in X$,
any non-negative integers $0\leq a_1 \leq b_1 < a_2 \leq b_2 < \ldots < a_k \leq b_k$
with $a_i - b_{i-1} \geq M$ for each $i = 2, 3,\ldots , k$ and  any $p \geq M + b_k-a_1$,
there exists a point $z \in X$ whose orbit closure is an odometer
and additionally $d(T^{j}(z),T^{np+j}(z)) < \eps$ for every $n,j\geq 0$ and
$d(T^{np+j}(z), T^j(x_i )) < \eps$ for all $a_i\leq  j \leq b_i$, $1 \leq i \leq k$ and $n\geq 0$.
\end{prop}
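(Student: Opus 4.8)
The plan is to reproduce the construction behind the weak periodic specification property of \cite{LO13} and then insert one extra step: instead of stopping at an approximately periodic tracing point, I feed that point into Lemma~\ref{lem:odometer} to force its orbit closure to be an odometer. Concretely, I will (i) build a genuinely $p$-periodic $\delta_0$-pseudo-orbit whose first period traces the prescribed segments $T^j(x_i)$, (ii) shadow it to obtain a point $y$ with $d(T^p(y),y)$ small, and (iii) apply Lemma~\ref{lem:odometer} with the integer $p$ to replace $y$ by a nearby $z$ with odometer orbit closure.

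Fix $\eps>0$ and choose the accuracies from the inside out. Apply Lemma~\ref{lem:odometer} with target accuracy $\eps/4$ and let $\delta>0$ be the return tolerance it demands (the shadowing constant for $\eps/16$). Put $\eta=\min\{\eps/4,\delta/2\}$ and let $\delta_0>0$ be a shadowing constant for $\eta$. The key input from weak mixing together with shadowing, established in \cite{LO13}, is a uniform $M>0$ with the property that any two points of $X$ can be joined by a $\delta_0$-chain of \emph{every} prescribed length $\ge M$; this chain-mixing (as opposed to mere chain transitivity) is exactly where weak mixing is used.

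Given the data $x_1,\dots,x_k$, the times $0\le a_1\le b_1<\dots<a_k\le b_k$ with $a_i-b_{i-1}\ge M$, and $p\ge M+b_k-a_1$, define $(w_t)_{t=0}^{p-1}$ by $w_j=T^j(x_i)$ whenever $a_i\le j\le b_i$, and fill the remaining positions by $\delta_0$-chains: between consecutive segments the gap has length $a_{i+1}-b_i\ge M$, and the wrap-around from $b_k$ to $a_1$ has length $p+a_1-b_k\ge M$, so the required chains exist. Extending $(w_t)$ with period $p$ gives a $\delta_0$-pseudo-orbit $\alpha$ (within-segment steps are exact, $T(w_j)=w_{j+1}$, the others are $\delta_0$-close). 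Let $y$ be a point $\eta$-tracing $\alpha$. Periodicity of $\alpha$ yields $d(T^p(y),y)\le d(T^p(y),w_0)+d(w_0,y)\le 2\eta\le\delta$, so Lemma~\ref{lem:odometer} applies to $y$ with $n=p$: there is $z$ whose orbit closure is an odometer and $d(T^{jp+i}(z),T^i(y))\le\eps/4$ for all $j\ge0$ and $0\le i<p$. Writing $j=qp+i$ and comparing both $T^j(z)$ and $T^{np+j}(z)$ with $T^i(y)$ gives $d(T^j(z),T^{np+j}(z))\le\eps/2<\eps$; and since $0\le j\le b_k<p$ for the relevant indices, for $a_i\le j\le b_i$ and $n\ge0$ we get
\[
d(T^{np+j}(z),T^j(x_i))\le d(T^{np+j}(z),T^j(y))+d(T^j(y),w_j)\le \eps/4+\eta\le\eps/2<\eps,
\]
using $w_j=T^j(x_i)$ exactly. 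This yields both conclusions.

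The genuinely load-bearing step is the chain-mixing statement borrowed from \cite{LO13}: manufacturing $\delta_0$-chains of \emph{every} sufficiently large length between arbitrary points, which is what permits closing the pseudo-orbit into an exactly $p$-periodic one while honoring the prescribed gaps. After that the argument is triangle-inequality bookkeeping plus a single call to Lemma~\ref{lem:odometer}; the only delicate point is the order of the choices $\delta,\eta,\delta_0,M$, arranged so that the approximately periodic point $y$ returns within the tolerance $\delta$ that Lemma~\ref{lem:odometer} requires.
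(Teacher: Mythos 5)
Your argument is, in all essentials, the paper's own proof: weak mixing plus shadowing yields a uniform $M$ such that any two points are joined by $\xi$-chains of every prescribed length $\geq M$ (the paper cites \cite{RW} for this chain-mixing fact rather than \cite{LO13}, though the latter indeed relies on it); one assembles a $p$-periodic pseudo-orbit whose periods follow the prescribed orbit segments, traces it by a point $y$ with $d(T^p(y),y)<\delta$, and feeds $y$ into Lemma~\ref{lem:odometer}; both displayed conclusions then follow from triangle inequalities exactly as you write them. Your ordering of the constants $\delta$, $\eta$, $\delta_0$, $M$ is correct, and the estimates $\eps/4+\eps/4<\eps$ and $\eps/4+\eta<\eps$ mirror the paper's $\eps/4+\eps/4$ and $\eps/4+\delta/3$.

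There is, however, one step that fails as written: the assertion that ``$0\leq j\leq b_k<p$ for the relevant indices.'' The hypothesis only gives $p\geq M+b_k-a_1$, so when $a_1\geq M$ it is entirely possible that $p\leq b_k$ (take $k=2$, $M=10$, $a_1=b_1=100$, $a_2=b_2=110$, $p=20$). In that case your definition of $(w_t)_{t=0}^{p-1}$ by $w_j=T^j(x_i)$ for $a_i\leq j\leq b_i$ is ill-posed, since these indices exceed $p-1$, and in the final estimate you cannot invoke Lemma~\ref{lem:odometer} with the exponent $j$ itself, because its conclusion only compares $T^{mp+i}(z)$ with $T^i(y)$ for $0\leq i<p$. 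This is precisely why the paper opens its proof by reducing to the case $a_1=0$, where $p\geq M+b_k>b_k$ and the picture you describe is accurate; your proposal omits that reduction. The repair is routine and stays within your scheme: either perform the paper's reduction, or index the pseudo-orbit modulo $p$, i.e.\ set $w_{j\bmod p}=T^j(x_i)$ for $a_i\leq j\leq b_i$ (consistent because $b_k-a_1\leq p-M<p$, so these residues are distinct, and the cyclic gaps still have lengths $a_{i+1}-b_i\geq M$ and $p+a_1-b_k\geq M$), and then in the last estimate replace $j$ by $j\bmod p$, noting that the traced pseudo-orbit has value $T^j(x_i)$ at position $j\bmod p$. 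With either repair the rest of your argument goes through unchanged; as stated, though, it does not cover all inputs permitted by the statement.
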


\begin{proof}
Note that it if we can prove theorem under additional assumption that $a_1=0$,
then we can easily deduce case $a_1>0$. Therefore, let us assume that $a_1=0$.	

Fix any $\eps>0$. Let $\delta>0$ be provided to $\eps/16$ by the shadowing property.
Assume additionally that $\delta<\eps/2$.
Let $\xi$ be such that every $\xi$-pseudo orbit can be $\delta/3$-traced.
Since $(X,T)$ is weakly mixing, by \cite{RW},
there exists $M\in\N$ such that for every $x,y\in X$ and every $m\geq M$
there is a $\xi$-chain from $x$ to $y$ of length $m$.

Fix any $k \geq 2$, any $k$ points $x_1, x_2, \ldots , x_k \in X$,
any non-negative integers $0\leq a_1 \leq b_1 < a_2 \leq b_2 < \ldots < a_k \leq b_k$
with $a_i - b_{i-1} \geq M$ for $i = 2, 3, \ldots , k$ and  any $p \geq M + b_k$.
There exist $\xi$-chains $\alpha_1,\ldots, \alpha_k$ such that
$\alpha_i$ has length $a_i-b_{i-1}-1$, $\alpha_k$ has length $p-b_k-1$,
\[
    \alpha = x_1,\ldots, T^{b_1}(x_1), \alpha_1, T^{a_2}(x_2),\ldots T^{a_k}(x_k),
    \ldots, T^{b_k}(x_k),\alpha_k
\]
is a $\xi$-chain of length $p$ and $\alpha \alpha$ is also $\xi$-chain.
Let $y$ be a point which is $\delta/3$-tracing periodic $\xi$-pseudo orbit
$\alpha\alpha\alpha\ldots$. Then
\[
    d(y,T^p(y))\leq d(y,x_1)+d(T^p(y),x_1)< \delta,
\]
hence by Lemma~\ref{lem:odometer} there exists a point $z$ such that
$(\overline{\Orb(z,T)},T)$ is an odometer and
$d(T^{jp+i}(z),T^i(y))\leq \frac{\eps}{4}$ for every $i=0,1,\ldots, p-1$.
This implies that for every $a_i\leq  j \leq b_i$, $1 \leq i \leq k$ and $n\geq 0$ we have
\[
    d(T^{np+j}(z), T^j(x_i ))\leq d(T^{np+j}(z),T^j(y))+d(T^j(y),T^j(x_i))\leq
    \frac{\eps}{4}+\frac{\delta}{3}<\eps
\]
and for every $n,j\geq 0$, if we present $j=sp+i$, $0\leq i <p$ then
\begin{align*}
    d(T^{np+j}(z),T^{j}(z))&=d(T^{p(n+s)+i}(z),T^{ps+i}(z))\\
    &\leq d(T^{p(n+s)+i}(z),T^i(y))+d(T^i(y),T^{ps+i}(z))\leq \frac{\eps}{4}+\frac{\eps}{4}<\eps.
\end{align*}
The proof is completed.
\end{proof}

\section{Approximation of entropy by ergodic measures supported on almost 1-1
extensions of odometers}\label{sec:almost11}

The aim of this section is to prove Theorem~B, Corollary~C and some related results.
Theorem~B states that if a transitive system has the shadowing property then
one can approximate any invariant measure by a sequence of ergodic measures with lower entropy.
As any ergodic measure supported on odometer has zero entropy,
it is necessary to consider ergodic measures supported on almost 1-1 extensions of odometers.
Here we outline the ideas of Theorem~B for ergodic measure.
Next, this construction is extended to all invariant measures, by technique similar to the proof Theorem~A, but now with aid of the ergodic decomposition of the entropy
(see Lemma~\ref{lem:mes:ent:approx:2}).

Given an ergodic measure $\mu$, by Lemma~\ref{lem:sep}
there exists a separated set $\Lambda$
such that the cardinality of $\Lambda$ is related to the entropy of $\mu$ and
 the empirical measure of any point in the separated set $\Lambda$
is sufficiently close to the ergodic measure $\mu$ in the weak$^*$-topology.
By the shadowing property and a rather complicated construction (see Lemmas~\ref{lem:2} and \ref{lem:3}),
there exists a regularly recurrent point $z$ such that
$z$ is close to any point in the separated set $\Lambda$
and the entropy supported on the orbit closure of $z$ is related to the cardinality of $\Lambda$ (its orbit follows all elements of $\Lambda$).
We carefully prove that $z$ is regularly recurrent, which implies that its orbit closure is an almost 1-1 extensions of an odometer.

Consider an ergodic measure $\nu$ supported on the orbit closure of $z$
such that the entropy of $\nu$ is close to the entropy of the the orbit closure of $z$.
Then by the construction the measure $\nu$ is close to the ergodic measure $\mu$
and their entropies are also close.
If we remove some points in the separated set $\Lambda$ when constructing $z$,
then $\nu$ is still close to $\mu$ but this way upper bound on entropy of $\nu$ can be controlled.
This is main idea leading to approximation of given ergodic measure $\mu$ by ergodic measures
supported on almost 1-1 extensions of odometers with controlled value of entropy.

We start with the following lemma,
which allows us to decrease radius of returns of pseudo-orbits,
keeping the growth rate of the number of orbits at the same time. 
	The idea is similar to the proof of Lemma~\ref{lem21}, however now condition $d(x,T^n(x))<\delta$
	is replaced by $\diam (\Lambda\cup T^n(\Lambda))<\delta$ so that using tracing we are able to follow segments of orbits of different
	points from $\Lambda$, using $\Lambda$ as a "switching" region. This way we will produce a sufficiently separated set with a kind of $n$-periodic behavior up to some fixed accuracy. This will allow us to perform an inductive construction in later proofs. 

\begin{lem}\label{lem:2}
Suppose that $(X,T)$ has the shadowing property.
Let $\eps>0$, $\eta\in (0,\eps/4)$, let $\delta>0$ be provided for $\eta/2$ by the shadowing property.
If a set $\Lambda=\{x_0,\ldots, x_{s-1}\}$ is $(n,\eps)$-separated for some $n>0$ and
$\diam (\Lambda\cup T^n(\Lambda))<\delta$ then for every $\xi>0$
there exist a set $\Lambda'=\{y_0,\ldots,y_{s'-1}\}$ and $n'>0$ such that
\begin{enumerate}[(i)]
\item\label{lo:1} $\Lambda'$ is $(n',\eps-2\eta)$-separated,
\item\label{lo:2} $\diam (\bigcup_{i=0}^\infty T^{in'}(\Lambda'))<\xi$,
\item\label{lo:3} $n'=r n$ for some integer $r>0$ and $s'\geq s^{(1-\xi)r}$,
\item\label{lo:4} for every $i=0,\ldots, s'-1$ and every integer $j \geq 0$,
there is $i_j\in\{0,1,\dotsc,s-1\}$ such that
\[
    d(T^{jn+t}(y_i),T^t(x_{i_j})<\eta
\]
for $t=0,\ldots,n-1$ and additionally
\[
    |\{0\leq j<r : i_j=q\}|>0
\]
for each $q=0,\ldots,s-1$.
\end{enumerate}
\end{lem}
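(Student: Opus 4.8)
The plan is to turn $\Lambda$ into a ``switching region''. Since $\diam(\Lambda\cup T^n(\Lambda))<\delta$, for every symbolic sequence $w=(w_j)_{j\geq 0}\in\{0,\dots,s-1\}^{\N_0}$ the concatenation of orbit blocks
\[
x_{w_0},T(x_{w_0}),\dots,T^{n-1}(x_{w_0}),\ x_{w_1},T(x_{w_1}),\dots,T^{n-1}(x_{w_1}),\ x_{w_2},\dots
\]
is a $\delta$-pseudo-orbit, because its only non-trivial jumps $d(T^{n}(x_{w_j}),x_{w_{j+1}})$ are bounded by $\diam(\Lambda\cup T^n(\Lambda))<\delta$; hence it is $\eta/2$-traced by some point. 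Let $Y$ be the set of $y\in X$ such that for each $j\geq 0$ there is a symbol $i$ with $d(T^{jn+t}(y),T^{t}(x_i))\leq\eta$ for $0\leq t<n$. Then $Y$ is closed, $T^n(Y)\subseteq Y$, and nonempty (it contains every tracing point above), and since $2\eta<\eps$ the $(n,\eps)$-separation of $\Lambda$ makes the symbol in each block unique; this produces a surjective, equivariant coding $\phi\colon(Y,T^n)\to(\Sigma_s^+,\sigma)$ which realises ``following different points of $\Lambda$'' rigorously and from which clause (iv) will be read off.

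Next I would fix a large $r$ and work only with \emph{balanced} words of length $r$ (those in which each of $0,\dots,s-1$ occurs), extended periodically. If two such words $w\neq w'$ first differ at a block $j<r$, then the $(n,\eps)$-separation of $\Lambda$ gives $t<n$ with $d(T^t(x_{w_j}),T^t(x_{w'_j}))>\eps$, so any two points of $Y$ coding $w^\infty$ and $(w')^\infty$ differ by more than $\eps-2\eta$ at coordinate $jn+t<rn$; this gives (i) with $n'=rn$. For the cardinality, the number of balanced words of length $r$ is at least $s^r-s(s-1)^r$, which exceeds $s^{(1-\xi)r}$ for all large $r$; this gives (iii), and balancedness is exactly the surjectivity clause demanded in (iv).

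The genuine difficulty is (ii): confining the \emph{entire} forward $T^{n'}$-orbit of $\Lambda'$ to one set of diameter $<\xi$, for arbitrary $\xi$. Tracing alone only traps each iterate in the $\eta$-wide tube around the pseudo-orbit, so to descend below $\eta$ the tracing points must be upgraded to genuinely recurrent ones. Mirroring the proof of Lemma~\ref{lem21}, I would pass to a minimal subset of each period-$r$ fiber $\phi^{-1}(w^\infty)$ under $T^{n'}$, so that each chosen point becomes uniformly recurrent under $T^{n'}$; I would also require all words to agree on the blocks at positions $\equiv 0\pmod r$ (say all equal to $x_0$), so that at the return times $\equiv 0\pmod{n'}$ every point sits over the \emph{same} block. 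Then, as in the cell-shrinking step of Lemma~\ref{lem:odometer} applied to $T^{n'}$, I would replace the period by a large multiple of itself along which the common return cell has diameter $<\xi$.

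I expect the main obstacle to be the order of quantifiers in this last step: shrinking the common return cell forces a longer period, and a longer period in turn demands exponentially more separated words to keep the growth rate $s'\geq s^{(1-\xi)r}$, so the very slack in the exponent $(1-\xi)$ has to be spent to absorb the lengthening of the period. Making $r$, the recurrence (cell-shrinking) scale, and the word count depend on one another in a consistent order --- with $r$ chosen last, after the recurrence scale has been fixed for the target $\xi$ --- is the delicate bookkeeping on which the whole argument rests.
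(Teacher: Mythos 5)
Your opening moves are sound and match the paper's: the switching region $Y$ (the paper's $\Gamma$) with its unique, continuous, equivariant coding onto $\Sigma_s^+$, and the separation estimate for points lying over distinct periodic words, are exactly right; your balanced-word count ($s^r-s(s-1)^r\geq s^{(1-\xi)r}$ for large $r$) is even a legitimately more elementary substitute for the paper's minimal Toeplitz subshift of entropy $>(1-\xi/4)\log s$, as far as (i), (iii) and (iv) are concerned. The gap is where you located it, in (ii), and the mechanism you propose there fails. Uniform recurrence of $y_w$ under $S=T^{n'}$ gives, for each $\xi$, \emph{some} return times $k$ with $d(S^{k}(y_w),y_w)<\xi$; it does not confine the whole orbit $\{S^{jk}(y_w)\colon j\geq 0\}$ to a set of diameter $<\xi$. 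Since that orbit contains $y_w$ itself, the property you actually need (for each $\xi$ some $k$ with $\diam\{S^{jk}(y_w)\colon j\geq 0\}<\xi$) is precisely regular recurrence of $y_w$ with respect to $S$, and passing to a minimal subset of the fiber cannot produce it: minimal sets need not contain any regularly recurrent point. Concretely, if the minimal set $M_w=\overline{\Orb(y_w,S)}$ inside the fiber is weakly mixing (nothing in the hypotheses of the lemma rules this out), then $(M_w,S^{k})$ is minimal for every $k$, so the $S^{k}$-orbit of $y_w$ is dense in $M_w$ for \emph{every} $k$, while $\diam(M_w)$ can be of order $\eta\gg\xi$. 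Nor can you appeal to the cell-shrinking of Lemma~\ref{lem:odometer}: there the shrinking is not a consequence of recurrence but of an infinite iterative construction in which shadowing is re-applied at geometrically finer scales, and each stage needs pseudo-orbits whose jumps are below the \emph{new, smaller} shadowing constant. Your construction cannot feed such an iteration: every pseudo-orbit you build from $\Lambda$ has jumps up to $\delta$ (the constant for $\eta/2$), so tracing never beats the $\eta$-scale, and the set $\Lambda'$ you produce satisfies only $\diam(\Lambda'\cup T^{n'}(\Lambda'))=O(\eta)$, far too large to restart the argument at scale $\xi\ll\eta$. (There is also a synchronization problem: the return times of the exponentially many points $y_w$ lie in different syndetic sets, with no common $k$ guaranteed.)

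The missing idea is the paper's device for manufacturing \emph{periodic pseudo-orbits with arbitrarily small jumps} while keeping the word count. Take $C\subset\Sigma_s^+$ a \emph{minimal} subshift of entropy $>(1-\xi/4)\log s$ (a Toeplitz subshift), and let $D$ be a minimal set of $(\pi^{-1}(C),T^n)$; then $D$ consists of genuine orbits, $\pi(D)=C$, and $(D,T^n)$ is transitive. Cover $D$ by finitely many sets $U_1,\ldots,U_p$ of diameter $<\gamma$, where $\gamma<\xi/4$ is a shadowing constant for some $\eps'<\min\{\eta/3,\xi/4\}$. Choosing one genuine orbit $z_w\in D$ over each word $w\in\Lom_l(C)$ and pigeon-holing over pairs of cells, at least $|\Lom_l(C)|/p^2$ of the $z_w$ start in one common cell $U_i$ and have $T^{nl}(z_w)$ in one common cell $U_j$; minimality of $D$ supplies a \emph{single} connecting orbit segment from $U_j$ back to $U_i$ of length $s_{i,j}n\leq Mn$ with $M$ independent of $l$. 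Concatenating gives, for each such $w$, a periodic pseudo-orbit with all jumps $<\gamma$, which can be traced with accuracy $\eps'$; the tracing points $y_w$ then return $\eps'$-close to $U_i$ at \emph{all} multiples of $n'=(l+s_{i,j})n$, so $\diam\bigl(\bigcup_{j\geq 0}T^{jn'}(\Lambda')\bigr)<\gamma+2\eps'<\xi$, which is (ii). The cost of the pigeon-hole and the connector (the factor $p^{-2}$ and the extra $s_{i,j}\leq M$ in the period) is absorbed by taking $l\gg M,\log p$, preserving $s'\geq s^{(1-\xi)r}$, and clause (iv) is recovered because long words of a minimal subshift contain every symbol. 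Without this (or an equivalent) mechanism producing genuine-orbit segments that enter and exit a common $\gamma$-small cell, condition (ii) cannot be reached from the $\eta$-scale data you have.
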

\begin{proof}
Denote by $\Gamma$ the set of points $y$ in $X$ such that
for each integer $j\geq 0$ there is an index $i_j\in\{0,1,\dotsc,s-1\}$ such that
\begin{equation}\label{eq:sec:ij}
    d(T^{jn+k}(y), T^k(x_{i_j}))\leq \eta/2 \quad \text{ for }k=0,\ldots, n-1.
\end{equation}
Observe that $\Gamma\neq \emptyset$ because
for any choice of indexes $i_0,i_1,\ldots \in \{0,1,\ldots,s-1\}$ the sequence
\begin{equation}
    x_{i_0}, T(x_{i_0}), \ldots, T^{n-1}(x_{i_0}), x_{i_1}, T(x_{i_1}),
    \ldots, T^{n-1}(x_{i_1}), x_{i_2}, \ldots\label{eq:periodicpo}
\end{equation}
is a $\delta$-pseudo orbit, and hence is $\eta/2$ traced by some point in $\Gamma$.
It follows directly from the definition that $\Gamma$ is closed and $T^n(\Gamma)\subset \Gamma$.
Recall that $\Lambda$ is $(n,\eps)$-separated and $\eta<\eps/4$
hence if $d(T^{jn+k}(y), T^k(x_{i_j}))\leq \eta/2$
for $k=0,1,\ldots, n-1$ then the index $i_j$ is uniquely determined.
Namely, if there were two different indexes $i_j,{i_j}'$ satisfying the above condition then for some $0\leq k <n$ we would have
$$
\eps \leq d(T^k(x_{i_j}),T^k(x_{{i_j}'}))\leq d(T^k(x_{i_j},T^{jn+k}(y)))+d(T^{jn+k}(y), T^k(x_{{i_j}'}))\leq \eta<\eps
$$
which is a impossible. Therefore with each $y\in \Lambda$ we can associate a well defined sequence
$\pi(y)=i_0i_1\ldots\in \Sigma_s ^+$ provided by \eqref{eq:sec:ij} and that way we obtain
a continuous map
$\pi \colon \Gamma \to \Sigma_s^+$ such that $\sigma \circ \pi = \pi \circ T^n$.
By \eqref{eq:periodicpo} the map $\pi$ is onto.
Let $C\subset \Sigma_s^+$ be a minimal subshift with $\htop(C)>(1-\xi/4)\log (s)$,
e.g. a Toeplitz subshift (see \cite[Theorem~4.77]{Kurka}; cf \cite{Williams}).
We may also assume that each symbol $0,1,\ldots, s-1$ appears in an element of $C$ (thus in all of them).
To satisfy this condition it is enough to assume that $\htop(C)>\log (s-1)$.
Then there exists $N\in \N$ such that for every $l\geq N$ the set of words of length $l$
allowed in the subshift $C$ has the following lower bound on the number of elements
\[
    |\Lom_l(C)|>s^{(1-\xi/2)l}.
\]
Since the subshift $C$ is minimal, increasing $N$ if necessary,
we may assume that each word in $\Lom(C)$ consisting of at least $N$ symbols
contains at least one occurrence of each symbol $0,1,\ldots, s-1$.
Let $D$ be a minimal set for $T^n$ contained in $\pi^{-1}(C)$.
Clearly $\pi(D)=C$ and so
\[
    \htop (T^n|_D)\geq \htop(C)> (1-\xi/4)\log (s).
\]
Let $\gamma<\xi/4$ be provided by shadowing to $\eps'<\min\{\eta/3,\xi/4\}$.
Let $U_1,\ldots, U_p$ be an open cover of $D$ by sets with $\diam (U_i)<\gamma$.
Since $D$ is minimal for $T^n$, for each $1\leq i,j\leq p$ there is  $s_{i,j}\in\N$
such that $D\cap U_i\cap T^{-n s_{i,j}}(U_j)\neq\emptyset$.
Denote $M=\max_{i,j} \{s_{i,j}\}$.

Fix an integer $l$ such that
\[
    l>\frac{8\log(p)}{\xi}\quad\text{ and }\quad l>\frac{4M(1+\xi)}{\xi}.
\]
For each $w\in \Lom_{l}(C)$,
pick exactly one element $z_w\in \pi^{-1}([w]_C)\cap D$. We claim that the set
\[
    Z_l=\{z_w: w\in \Lom_l(C)\}\subset D
\]
is an $(n l,\eps-\eta)$-separated set for $T$.
Fix any distinct $u,w\in \Lom_l(C)$ and let $0\leq j<l$ be such that $w_j\neq u_j$. Then by \eqref{eq:sec:ij}
we obtain that
$$
d(T^{jn+k}(z_w), T^k(x_{w_j}))\leq \eta/2 \quad \textrm{ and }\quad d(T^{jn+k}(z_u), T^k(x_{u_j}))\leq \eta/2  \quad \text{ for }k=0,\ldots, n-1.
$$
But since $w_j\neq u_j$ there is also $0\leq k <n$ such that $d(T^k(x_{w_j}),T^k(x_{u_j}))>\eps$,
and therefore for this particular $k$ we obtain that
$$
d(T^{jn+k}(z_w), T^{jn+k}(z_u))> \eps-\eta/2-\eta/2
$$
which proves the claim.

Furthermore, using the pigeon-hole principle
we immediately obtain that there are sets $U_i, U_j$ in the list $\{U_k\}_{k=1}^p$ such that
\begin{equation}
    |Z_l \cap U_i \cap T^{-nl}(U_j)|\geq \frac{|\Lom_l(C)|}{p^2} > \frac{s^{(1-\xi/2)l}}{p^2}
    =s^{(1-\xi/2)l-2\log(p)}\geq s^{(1-\frac{3\xi}{4})l}. \label{eq:*xil}
\end{equation}
Put $r=l+s_{i,j}$ and $n'=rn$.
Pick a point $v\in D$ such that $v\in U_j$ and $T^{s_{i,j}n}(v)\in U_i$.

Let $\mathcal{A}=\{w\in\Lom_l(C)\colon z_w\in U_i, T^{nl}z_w\in U_j\}$.
For each $w\in \mathcal{A}$ the sequence
\begin{equation}
    z_w, T(z_w),\ldots, T^{nl-1}(z_w),v,T(v),\ldots, T^{s_{i,j}n-1}(v)\label{eq:v}
\end{equation}
is a finite $\gamma$-pseudo orbit and $d(T^{s_{i,j}n}(v),z_w)<\gamma$,
which we extend to an infinite (periodic) $\gamma$-pseudo orbit,
by periodic repetition of the sequence \eqref{eq:v}.
Pick a point $y_w\in X$ which $\eps'$-traces it.
Then the set
\[
    \Lambda'=\{y_w : w\in \mathcal{A} \}
\]
is $(n',\eps-2\eta)$ separated and by \eqref{eq:*xil} contains at least
\[
   |\Lambda'| = |\mathcal{A} |\geq  s^{(1-\frac{3\xi}{4})l}\geq s^{(1-\xi)r}
\]
elements, hence conditions \eqref{lo:1} and \eqref{lo:3} are satisfied.
But clearly $\bigcup_{i=0}^\infty T^{in'}(\Lambda')\subset B_{\eps'}(U_i)$ and so
\[
    \diam \Bigl(\bigcup_{i=0}^\infty T^{in'}(\Lambda')\Bigr)\leq\diam (U_i)+2\eps'
        <\gamma+2\eps'\leq \frac{\xi}{4}+\frac{\xi}{2}<\xi
\]
which shows that \eqref{lo:2} holds.

Finally, fix any $y_w\in \Lambda'$, where by definition $w\in \mathcal{A}$.
Fix any $j=0,\ldots, r-1$.
If $0\leq j<l$ then put $i_j=w_j$ and observe that for $t=0,1,\dotsc,n$ we have
\begin{align*}
    d(T^{jn+t}(y_w),T^t(x_{i_j}))&\leq d(T^{jn+t}(y_w),T^{j n+t}(z_w))+d(T^{jn+t}(z_w),T^t(x_{i_j}))\\
    &\leq \eps'+\eta/2<\eta.
\end{align*}
If $l\leq j<r-1$ then, because $v\in \Gamma$,
there exists $i_j\in\{0,1,\dotsc,s-1\}$ such that
\[
    d(T^{(j-l) n+t}(v),T^t(x_{i_j}))\leq\eta/2
\]
for $t=0,1,\dotsc,n$ and so also in this case
\begin{align*}
	d(T^{jn+t}(y_w),T^t(x_{i_j}))&\leq d(T^{jn+t}(y_w),T^{(j-l) n+t}(v))
        +d(T^{(j-l) n+t}(v),T^t(x_{i_j}))\\
	&\leq \eps'+\eta/2<\eta,
\end{align*}
for $t=0,1,\dotsc,n$. The case $j>r$ follows from the fact that $y_w$ traces a pseudo-orbit obtained
by periodic repetitions of the sequence \eqref{eq:v}.
Additionally, by the choice of $l$, any word $w$ in $\mathcal{A}$
contains at least one occurrence of each symbol
$0,1,\ldots, s-1$ which implies \eqref{lo:4}.
The proof is completed.
\end{proof}

\begin{lem}\label{lem:3}
Suppose that $(X,T)$ has the shadowing property.
Let $\eps>0$, $\eta\in (0,\eps/8)$ and  let $\delta>0$ be provided for $\eta/8$ by the shadowing property.
If the set $\Lambda=\{x_0,\ldots, x_{s-1}\}$ is $(n,\eps)$-separated for some $n>0$  and
$\diam(\Lambda\cup T^n(\Lambda))<\delta$
then there exists a regularly recurrent point $z$ such that
\begin{enumerate}[(a)]
\item\label{lem3:c1} for every $j\geq 0$ there is $i_j\in\{0,1,\dotsc,s-1\}$
	such that $d(T^{jn+t}(z),T^t(x_{i_j}))\leq2\eta$ for $t=0,\ldots,n-1$,
\item\label{lem3:c2} $(1-\eta)\log(s)/n \leq \htop (\overline{\Orb(z,T)},T) \leq \log(s)/n$.
\end{enumerate}
\end{lem}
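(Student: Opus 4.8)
The plan is to iterate Lemma~\ref{lem:2} to build a ``tower'' of separated sets with shrinking return radius, in exact analogy with the way Lemma~\ref{lem:odometer} iterates Lemma~\ref{lem21}, and then to read off $z$ as a limit. First I would fix auxiliary sequences: a summable sequence $(\eta_k)$ with $\eta_1=\eta/4$ and $\sum_k\eta_k\le\eta$ (so the block-tracing errors accumulated along the tower never exceed $2\eta$, as needed for (a), while the separation constant $\eps_k:=\eps-2\sum_{\ell<k}\eta_\ell$ stays above $\eps/2$); numbers $\delta_k$ provided for $\eta_k/2$ by shadowing; and a summable sequence $(\xi_k)$ with $\xi_k<\delta_k$ and $\prod_k(1-\xi_k)\ge1-\eta$. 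Starting from $\Lambda^{(1)}=\Lambda$, $n_1=n$, $s_1=s$, I would apply Lemma~\ref{lem:2} repeatedly with parameters $(\eps_k,\eta_k,\delta_k,\xi_{k+1})$ to obtain $(n_{k+1},\eps_{k+1})$-separated sets $\Lambda^{(k+1)}=\{x^{(k+1)}_0,\dots\}$ with $n_{k+1}=r_kn_k$, $s_{k+1}=|\Lambda^{(k+1)}|\ge s_k^{(1-\xi_{k+1})r_k}$, and $\diam\bigl(\bigcup_{i\ge0}T^{in_{k+1}}\Lambda^{(k+1)}\bigr)<\xi_{k+1}$. The choice $\eta_1=\eta/4$ makes the given $\delta$ (provided for $\eta/8$) admissible at the first step, and the diameter bound guarantees $\diam(\Lambda^{(k+1)}\cup T^{n_{k+1}}\Lambda^{(k+1)})<\xi_{k+1}<\delta_{k+1}$, so the induction continues. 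Two features propagate down the tower by composing clause~\eqref{lo:4} of Lemma~\ref{lem:2} (using $n_\ell\mid n_{\ell+1}$): every point of $\Lambda^{(m)}$ traces, in $n_k$-blocks, points of $\Lambda^{(k)}$ with total error $\sum_{\ell=k}^{m-1}\eta_\ell$, and within one period each symbol of $\Lambda^{(k)}$ is realized.

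Next I would define $z$. Since the diameter bound forces $\diam\Lambda^{(k)}<\xi_k$ and every point of $\Lambda^{(k+1)}$ is within $\eta_k$ of some point of $\Lambda^{(k)}$, choosing any $y^{(k)}\in\Lambda^{(k)}$ gives $d(y^{(k)},y^{(k+1)})<\eta_k+\xi_k$; hence $(y^{(k)})$ is Cauchy and I set $z=\lim_k y^{(k)}$. For (a) I fix $j$ and note that for each $m$ there is $i_j^{(m)}\in\{0,\dots,s-1\}$ with $d(T^{jn+t}(y^{(m)}),T^t(x_{i_j^{(m)}}))\le\sum_{\ell<m}\eta_\ell$ for $0\le t<n$; as there are finitely many indices I pass to a subsequence on which $i_j^{(m)}$ is constant and let $m\to\infty$ (each $T^{jn+t}$ is continuous), obtaining $i_j$ with $d(T^{jn+t}(z),T^t(x_{i_j}))\le\sum_\ell\eta_\ell\le2\eta$.

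The hard part is regular recurrence, since naive continuity fails: $T^{jn_k}$ amplifies distances, so closeness of $z$ to $y^{(k)}$ does not by itself control $d(T^{jn_k}(z),z)$ for large $j$. I would instead obtain control \emph{uniform} in $j$, exactly as in Lemma~\ref{lem:odometer}: for fixed $j$ and $m>k$, composed tracing at $t=0$ gives $d(T^{jn_k}(y^{(m)}),x^{(k)}_i)\le\sum_{\ell=k}^{m-1}\eta_\ell$ for some index $i$, and the diameter bound gives $d(x^{(k)}_i,y^{(k)})<\xi_k$; letting $m\to\infty$ (with $j,k$ fixed, so $T^{jn_k}$ is a fixed continuous map) yields $d(T^{jn_k}(z),y^{(k)})\le\sum_{\ell\ge k}\eta_\ell+\xi_k$, whence
\[
    d(T^{jn_k}(z),z)\le 2\sum_{\ell\ge k}\eta_\ell+\xi_k+\sum_{\ell\ge k}\xi_\ell=:C_k
\]
uniformly in $j\ge0$. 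Since $C_k\to0$, for any neighbourhood of $z$ some period $n_k$ works, so $z$ is regularly recurrent (and then $(\overline{\Orb(z,T)},T)$ is minimal, an almost $1$-$1$ extension of an odometer by Theorem~\ref{thm:odometer}).

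Finally, for (b) I would read entropy off the tower. For the upper bound I use scale $n_k$: by the scale-$n_k$ version of (a) (derived as above), each orbit point $T^a(z)$ is determined on a window of length $mn_k$, up to error $2\epsilon'_k$ with $\epsilon'_k:=\sum_{\ell\ge k}\eta_\ell$, by its phase modulo $n_k$ and the at most $m+1$ relevant block indices in $\{0,\dots,s_k-1\}$; hence any $(mn_k,\rho)$-separated set with $\rho>2\epsilon'_k$ has at most $n_k s_k^{m+1}$ points (the closure adds nothing by approximation), so $\limsup_N\frac1N\log s_N(\rho)\le\frac{\log s_k}{n_k}\le\frac{\log s}{n}$ (the last step since $s_{k+1}\le s_k^{r_k}$). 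As $\epsilon'_k\to0$ this bounds the entropy at every resolution, giving $\htop(\overline{\Orb(z,T)},T)\le\log(s)/n$. For the lower bound I use that the first $n_{k+1}$-block of $z$ traces some $x^{(k+1)}_i$, which by clause~\eqref{lo:4} realizes every symbol $a\in\{0,\dots,s_k-1\}$ within its period; thus for each $a$ there is $j_a<r_k$ with $w_a:=T^{j_an_k}(z)\in\overline{\Orb(z,T)}$ and $d(T^t(w_a),T^t(x^{(k)}_a))\le\epsilon'_k$ for $0\le t<n_k$. Since $\{x^{(k)}_a\}$ is $(n_k,\eps_k)$-separated with $\eps_k>\eps/2$, the set $\{w_a\}$ is $(n_k,\rho_0)$-separated for a fixed $\rho_0<\eps/2$ and all large $k$, so $s_{n_k}(\rho_0)\ge s_k$ and
\[
    \htop(\overline{\Orb(z,T)},T)\ge\limsup_k\frac{\log s_k}{n_k}\ge\Bigl(\prod_{\ell}(1-\xi_\ell)\Bigr)\frac{\log s}{n}\ge(1-\eta)\frac{\log s}{n}.
\]
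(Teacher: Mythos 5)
Your proposal is correct and follows essentially the same path as the paper's proof: iterate Lemma~\ref{lem:2} to build a tower of separated sets with shrinking return radius, take the limit point $z$, establish regular recurrence and condition (a) by composed tracing estimates plus a subsequence argument, and derive the entropy bounds from the symbol-realization clause \eqref{lo:4} (lower bound) together with a phase/block-index counting at scale $n_k$ (upper bound). The only cosmetic differences are your slightly cleaner constants (e.g.\ $\eta_1=\eta/4$, which matches the hypothesis of Lemma~\ref{lem:2} exactly) and that you count separated sets for the upper bound where the paper counts spanning sets --- both yield the same estimate $n_k s_k^{q+1}$.
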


\begin{proof}
Fix a sequence  $(\lambda_k)_{k=1}^\infty$ of positive numbers such that
$\lambda_1=\eta/4$ and $\prod_{k=1}^\infty (1-\lambda_k)>(1-\eta)$.
Define inductively sequences $(\eps_k)_{k=1}^\infty$, $(\eta_k)_{k=1}^\infty$,
$(\delta_k)_{k=1}^\infty$ of positive numbers by
\begin{itemize}
\item $\eps_1=\eps$, $\eta_1=\eta/8$ and $\delta_1=\delta$,
\item $\eps_{k+1}=\eps_k-2\eta_k$,
\item $\eta_{k+1}=\eta_k/8=\eta/8^{k+1}$,
\item $\delta_{k+1}$ is provided for $\eta_{k+1}$ by shadowing,
\item $\xi_k=\min\{\lambda_{k},\delta_{k+1},\eta_{k+1}\}$.
\end{itemize}
	
We will construct a sequence $(z_k)_{k=1}^\infty$ of points in $X$ and sequences
$(s_k)_{k=1}^\infty$, $(n_k)_{k=1}^\infty$ and $(r_k)_{k=1}^\infty$ of positive integers by induction.
Put $n_1=n$, $s_1=s$ and $\Lambda_1=\Lambda$.
Enumerate the elements of the $\Lambda_1$  by $\Lambda=\{y_0^{(1)},\ldots,y_{s_1-1}^{(1)}\}$.
Observe that $\Lambda_1$ is $(n_1,\eps_1)$-separated and $\diam (\Lambda_1\cup T^{n_1}(\Lambda_1))<\xi_1$,
hence applying Lemma~\ref{lem:2} for  $\Lambda_1$
with parameters $n_1$, $\eps_1$, $\xi_1$, $\eta_1$ and $\delta_1$,
we obtain a set $\Lambda_2=\{y^{(2)}_0,\ldots,y^{(2)}_{s_2-1}\}$
and positive integers $r_1$, $n_2$, $s_2$ such that
(we can always remove some elements from $\Lambda_2$ if there were too many):
\begin{enumerate}[(i)]
  \item $\Lambda_2$ is $(n_2,\eps_2)$-separated,
  \item $\diam(\bigcup_{i=0}^\infty T^{i n_2}(\Lambda_2))<\xi_1<\delta_2$,
  \item $n_2=r_1 n_1$ and $s_1^{(1-\xi_1)r_1}< s_2\leq s_1^{r_1}$,
  \item  for every $i=0,\ldots, s_2-1$ and $j\geq 0$
  there is $i_j\in\{0,1,\dotsc,n_1-1\}$ such that
\[
  d(T^{jn_1+t}(y^{(2)}_i),T^t(y_{i_j}^{(1)}))<\eta_1
\]
  for $t=0,\ldots, n_1-1$ and additionally
\[
    |\{0\leq j<r_1 : i_j=q\}|>0
\]
  for each $q=0,\ldots,s_1-1$.
\end{enumerate}
Proceeding inductively, for each $k>1$ we generate by Lemma~\ref{lem:2}
a set $\Lambda_k=\{y_0^{(k)},\ldots,y_{s_k-1}^{(k)}\}$
and positive integers $r_{k-1},n_k,s_k$ such that
\begin{enumerate}[(i)]\setcounter{enumi}{4}
\item $\Lambda_k$ is $(n_k,\eps_k)$-separated,
\item\label{eq:kb} $\diam (\bigcup_{i=0}^\infty T^{i n_k}(\Lambda_k))<\xi_{k-1}\leq \delta_k$,
\item $n_k=r_{k-1} n_{k-1}$ and $s_{k-1}^{(1-\xi_{k-1})r_{k-1}}<s_k \leq s_{k-1}^{r_{k-1}}$,
\item\label{eq:kd}  for every $i=0,\ldots, s_k-1$ and $j\geq 0$
 there is $i_j\in\{0,1,\dotsc,s_{k-1}-1\}$ such that
\[
  d(T^{j n_{k-1}+t}(y^{(k)}_i),T^t(y^{(k-1)}_{i_j}))<\eta_{k-1}
\]
$t=0,\ldots,n_{k-1}-1$ and  additionally
\[
    |\{0\leq t<r_{k-1} : i_t=q\}|>0
\]
 for each $q=0,\ldots,s_{k-1}-1$.
\end{enumerate}
Put $z_k=y^{(k)}_0$ and observe that for each $k\geq 1$
we have $z_k\in \Lambda_{k}$ and $z_{k+1}\in \Lambda_{k+1}$,
thus
\begin{align*}
	d(z_k,z_{k+1})&\leq \diam(\Lambda_{k})+\diam(\Lambda_{k+1})+d(\Lambda_{k},\Lambda_{k+1})\\
	&\leq\eta_{k}+\eta_{k+1}+\eta_{k}<\eta/8^{k-1}.
\end{align*}
This shows that $(z_k)_{k=1}^\infty$ is a Cauchy sequence and so
$z=\lim_{k\to\infty} z_k$ is well defined.

Fix any $j\geq 0$, $k\geq 1$ and $m>k$.
Since $n_k$ divides $n_{m-1}$, there exists $l\geq 0$ and $j_{m-1}\geq 0$
such that $jn_k=ln_{m-1}+j_{m-1}n_k$.
In fact, we can put $l=[\frac{jn_k}{n_{m-1}}]$ and $j_{m-1}=j-\frac{ln_{m-1}}{n_k}$.
Since $z_m\in \Lambda_m$, by ($m$-d), there exists $i_{m-1}\in\{0,1,\dotsc,s_{m-1}-1\}$
such that
\[
	d(T^{ln_{m-1}+t}(z_m),T^{t}(y_{i_{m-1}}^{(m-1)}))<\eta_{m-1}\text{ for }t=0,1,\dotsc,n_{m-1}-1.
\]
In particular,
\[
	d(T^{jn_k+t}(z_m),T^{j_{m-1} n_k+t}(y_{i_{m-1}}^{(m-1)}))<\eta_{m-1}\text{ for }t=0,1,\dotsc,n_k-1.
\]
By the same argument,
there is $j_{m-2}\geq 0$ and $i_{m-2}\in\{0,1,\dotsc,s_{m-2}-1\}$ such that
\[
	d(T^{j_{m-1} n_k+t}(y_{i_{m-1}}^{(m-1)}),T^{j_{m-2} n_k+t}(y_{i_{m-2}}^{(m-2)}))<\eta_{m-2}
    \text{ for }t=0,1,\dotsc,n_k-1,
\]
and then
\[
	d(T^{j n_k+t}(z_m),T^{j_{m-2} n_k+t}(y_{i_{m-2}}^{(m-2)}))<\eta_{m-1}+\eta_{m-2}
	\text{ for }t=0,1,\dotsc,n_k-1.
\]
Repeating this reduction $(m-k)$-times, we obtain some $j_{k}\in\{0,1,\dotsc,s_{k}-1\}$ such that
\begin{equation}
	d(T^{jn_k+t}(z_m),T^t(y_{j_{k}}^{(k)}))<\sum_{i=k}^{m-1}\eta_{i}\leq \sum_{i=k}^{\infty} \eta/8^{i}
	\leq 2 \eta_k, \text{ for }t=0,1,\dotsc,n_k-1.\label{eq:trace}
\end{equation}
	
By (\ref{eq:kb}), we have $\diam(\Lambda_k)\leq \eta_k$. Then by~\eqref{eq:trace} with $t=0$, we have
\begin{equation}
	d(T^{j n_k}(z_m),z_k)\leq \diam(\Lambda_k)+2\eta_k\leq 3 \eta_k\leq \eta/8^{k-1}.
	\label{eq:34}
\end{equation}
Passing with $m\to\infty$ with a fixed $j\geq 0$ and $k\geq 1$ in \eqref{eq:34}, we get
\begin{equation*}
	d(T^{j n_k}(z),z_k)\leq \eta/8^{k-1},
\end{equation*}
and then
\[
	d(T^{j n_k}(z),z)\leq d(T^{j n_k}(z),z) +d(z,z_k)
        \leq \eta/8^{k-1} +\sum_{i=k}^\infty d(z_k,z_{k+1})\leq \eta/8^{k-2}.
\]
As $j\geq 0$ and $k\geq 1$ are arbitrary, $z$ is regularly recurrent.
	
By \eqref{eq:trace} with $k=1$, we obtain that for every $j\geq 0$ and $m>1$,
there exists $i_{j,m}\in \{0,1,\dotsc,s-1\}$ such that
\[
    d(T^{jn+t}(z_m),T^t(x_{i_{j,m}}))\leq 2 \eta_1, \text{ for }t=0,1,\dotsc,n-1.
\]
There are only finitely many choices of indexes $i_{j,m}$,
hence for each $j$, the sequence $(i_{j,m})_{m=1}^\infty$ has a subsequence of
a fixed value $i_j$. Passing to a limit for $m\to\infty$ over this subsequence, we get
\[
    d(T^{jn+t}(z),T^t(x_{i_{j}}))\leq 2 \eta_1, \text{ for }t=0,1,\dotsc,n-1,
\]
which shows  that \eqref{lem3:c1} holds.

By the same argument, for each $k\geq 2$ and $j\geq 0$,
there exists $i_{j,k}\in\{0,1,\dotsc,s_k-1\}$ such that
\begin{equation}
	d(T^{jn_k+t}(z),T^{t}(y^{(k)}_{i_{j,k}}))<\eta_{k-1} \text{ for }t=0,1,\dotsc,n_k-1.\label{eq:approx}
\end{equation}

Fix any $k\geq 2$ and observe that by \eqref{eq:approx} with $j=0$,
there exists $i_{k}\in\{0,1,\dotsc,s_k-1\}$ such that
\[
	d(T^{t}(z),T^{t}(y^{(k)}_{i_{k}}))<\eta_{k-1} \text{ for }t=0,1,\dotsc,n_k-1.  
\]
For this particular $i_k$, by (\ref{eq:kd}), for each $q=0,1,\dotsc,s_{k-1}-1$,
there exists $j_q\in \{0,1,\dotsc,r_{k-1}-1\}$ such that
\[
    d(T^{j_qn_{k-1}+t}(y^{(k)}_{i_{k}}),T^t(y^{(k-1)}_q))<\eta_{k-1} \text{ for }t=0,1,\dotsc,n_{k-1}-1
\]
and then
\[
    d(T^{j_qn_{k-1}+t}(z),T^t(y^{(k-1)}_q))<2\eta_{k-1} \text{ for }t=0,1,\dotsc,n_{k-1}-1.
\]
By the construction the set
$\Lambda_{k-1}=\{y^{(k-1)}_0,y^{(k-1)}_1, \dotsc, y^{(k-1)}_{s_{k-1}-1}\}$
is $(n_{k-1},\eps_{k-1})$-separated.
So the set
\[
	\{T^{j n_{k-1}}(z)\colon i=0,\ldots, r_{k-1}-1\}
\]
contains an $(n_{k-1},\eps_{k-1}-2\eta_{k-1})$-separated set
consisting of at least $s_{k-1}$-elements.
Note that $\eps_{k-1}-2\eta_{k-1}\geq \eps/2$ and
\begin{align*}
  \log (s_{k-1})&\geq {(1-\xi_{k-2})r_{k-2}}\log (s_{k-2})\geq \ldots
      \geq \log(s_1)\prod_{j=1}^{k-2}(1-\xi_{j})r_j\\
  &\geq (1-\eta) \log(s_1)\frac{n_{k-1}}{n}=\frac{n_{k-1}}{n}(1-\eta)\log(s).
\end{align*}
So we obtain a lower bound for topological entropy of $T$ on $\overline{\Orb(z,T)}$, which is
\[
    \htop (\overline{\Orb(z,T)},T)\geq \limsup_{k\to\infty} \frac{1}{n_{k-1}}\log(s_{k-1})
    \geq \frac{(1-\eta)}{n}\log(s).
\]
	
Now fix any $\xi>0$ and pick a large enough positive integer $k$ such that $4\eta_{k-1}<\xi$.
Fix any positive integer $m$.
We first assume that $m$ is of the form $m=q n_k$ for some $q\geq 1$.
For any $x\in \Gamma$, there is $r\geq 0$ such that
\[
    d(T^{r+t}(z),T^t(x))<\eta_{k-1}
\]
for $t=0,\ldots,m-1$.
Write $r$ as $r=pn_k+l$ for some $0\leq l<n_k$. Then by \eqref{eq:approx}
there are indexes $w_0,\ldots, w_{q}$ such that
\[
    d(T^{(p+i)n_k+t}(z),T^t(y_{w_i}^{(k)}))<2\eta_{k-1}
\]
for $t=0,\ldots, n_{k}-1$ and $i=0,\ldots, q$. If $0<t<n_k-l$, we have
\[
    d(T^{t}(x), T^{l+t}(y_{w_0}^{(k)}))< d(T^{t}(x), T^{pn_k+l+t}(z))+
        d(T^{pn_k+l+t}(z),T^{l+t}(y_{w_0}^{(k)}))<3\eta_{k-1}.
\]
If $n_k-l\leq t < qn_k$, then $t\in [i n_k-l,(i+1)n_k-l)$ for some $i=0,1,\dotsc,q$, and
\[
    d(T^{t}(x), T^{l+t}(y_{w_{i}}^{(k)}))< d(T^{t}(x), T^{pn_k+l+t}(z))+
        d(T^{pn_k+l+t}(z),T^{l+t}(y_{w_{i}}^{(k)}))<3\eta_{k-1}.
\]
Note that there are $n_k$ choices of $l$ and $s_k^{q+1}$ choices of the indexes $w_0,\ldots, w_{q}$.
We get an upper bound of the minimal number of elements of $(m,\xi)$-spanning set
for $(\overline{\Orb(z,T)},T)$.
Strictly speaking, if $S_m$ is an $(m,\xi)$-spanning set for $(\Gamma,T)$ with minimal number of elements,
then
\[
	|S_m|\leq n_k s_k^{q+1}\leq n_k s^{(q+1)\prod_{j=1}^{k-1} r_j}
        =n_k s^{(q+1)n_k/n}.
\]
For general positive integer $m$, there exists $q\in\N_0$ such that $q n_k\leq m<(q+1)n_k$
and in this case $|S_{(i+1)n_k}|\leq |S_m|\leq | S_{in_k}|$.
So
\[
    \limsup_{m\to \infty} \frac{1}{m}\log |S_m|= \limsup_{q\to \infty}\frac{1}{q n_k}\log |S_{(q+1)n_k}|
        \leq \limsup_{q\to \infty}\frac{1}{q n_k} \log(n_ks^{(q+2)n_k/n})
        =\frac{1}{n}\log(s).
\]
But $\xi<0$ is arbitrary, which shows that topological entropy on $\overline{\Orb(z,T)}$
is bounded from above by $\frac{1}{n}\log(s)$, completing the proof.
\end{proof}

If a dynamical system is transitive, then the whole space is the unique chain-recurrent class.
So Theorem B follows from the following result.
\begin{thm}\label{thm:chain-rec:ent}
Suppose that $(X,T)$ has the shadowing property and $\mu\in M_T(X)$.
If $\supp(\mu)/_\sim$ is a singleton, then for every $0\leq c\leq h_\mu(T)$
there exists a
sequence of ergodic measures $(\mu_n)_{n=1}^\infty$
supported on almost 1-1 extensions of odometers such that
$\lim_{n\to \infty}\mu_n=\mu$ and $\lim_{n\to \infty}h_{\mu_n}(T)=c$.
\end{thm}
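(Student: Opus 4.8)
The plan is to treat the degenerate case $c=0$ separately and then, for $0<c\le h_\mu(T)$, to build for each accuracy $\tfrac1m$ a single ergodic measure $\nu$ supported on an almost $1$-$1$ extension of an odometer with $\dbl(\nu,\mu)<\tfrac1m$ and $|h_\nu(T)-c|<\tfrac1m$; letting $m\to\infty$ then yields the desired sequence. For $c=0$ nothing new is needed: by Theorem~\ref{thm:approx:erg} we may approximate $\mu$ by ergodic measures supported on odometers, and each such measure has zero entropy and is trivially supported on an almost $1$-$1$ extension of an odometer.

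For the main case, fix a small $\eps>0$. Following the reduction in the proof of Theorem~\ref{thm:approx:erg}, I would use the single chain-recurrent class hypothesis together with shadowing to produce a constant $M$ such that any two points of $\supp(\mu)$ are joined by a connecting pseudo-orbit of length at most $M$, and I would use Lemma~\ref{lem:mes:ent:approx:2} to write $\tfrac1K\sum_{i=1}^K\mu_i$ close to $\mu$ in $\dbl$, with $\mu_i$ ergodic and $\tfrac1K\sum_i h_{\mu_i}(T)$ controlling $h_\mu(T)$. For each $i$ I would invoke Lemma~\ref{lem:sep} and Remark~\ref{rem:sep} to obtain, for a common large block length $L$, an $(L,\delta)$-separated set $\Gamma_i\subset\supp(\mu_i)$ all of whose points $x$ satisfy $\E_L(x)$ in a prescribed neighborhood of $\mu_i$, with $\tfrac1L\log|\Gamma_i|$ close to $h_{\mu_i}(T)$. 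The key new ingredient is to \emph{thin} each $\Gamma_i$ to a subset $\Gamma_i'$ whose cardinality encodes the target: choosing $0\le c_i\le h_{\mu_i}(T)$ with $\tfrac1K\sum_i c_i=\min\{c,\tfrac1K\sum_i h_{\mu_i}(T)\}$ (which tends to $c$ as the approximation of $\mu$ is refined, since $c\le h_\mu(T)$) and $\tfrac1L\log|\Gamma_i'|\approx c_i$; since any subset of $\Gamma_i$ still consists of points with $\E_L$ near $\mu_i$, thinning preserves the measure estimate while lowering entropy.

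I would then assemble a localized separated set $\Lambda$ suitable for Lemma~\ref{lem:3}. Fix a base point $x_*\in\supp(\mu)$; for each tuple $(\gamma_1,\dots,\gamma_K)\in\Gamma_1'\times\dots\times\Gamma_K'$ form a periodic $\xi$-pseudo-orbit that starts at $x_*$, successively follows the length-$L$ orbit segment of each $\gamma_i$, interposes connectors of length at most $M$ between consecutive segments, and returns near $x_*$, and let $y_{(\gamma_1,\dots,\gamma_K)}$ be a tracing point. Taking $\Lambda$ to be the set of these tracing points and $n\approx KL$ the common period, one checks that $\Lambda$ is $(n,\eps')$-separated (the separation of the $\Gamma_i'$ survives tracing), that $\diam(\Lambda\cup T^n(\Lambda))<\delta$ (all elements start and return $\xi$-close to $x_*$), that $\tfrac1n\log|\Lambda|=\tfrac1n\sum_i\log|\Gamma_i'|\approx\tfrac1K\sum_i c_i$ is close to $c$ (connectors are negligible as $M\ll L$), and that every element of $\Lambda$ has $n$-block empirical measure close to $\tfrac1K\sum_i\mu_i$, hence to $\mu$. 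Applying Lemma~\ref{lem:3} to $\Lambda$ produces a regularly recurrent point $z$ whose orbit closure $W=\overline{\Orb(z,T)}$ is, by Theorem~\ref{thm:odometer}, an almost $1$-$1$ extension of an odometer, with $\htop(W,T)$ pinched between $(1-\eta)\tfrac1n\log|\Lambda|$ and $\tfrac1n\log|\Lambda|$, hence close to $c$, and with the orbit of $z$ tracing elements of $\Lambda$.

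Finally, by the variational principle I would choose an ergodic $\nu\in M^{erg}_T(W)$ with $h_\nu(T)$ within $\eps$ of $\htop(W,T)$, so that $|h_\nu(T)-c|$ is small (taking $\eta,\eps$ small controls the lower gap coming from the factor $1-\eta$). It remains to check $\dbl(\nu,\mu)<\eps$: Lemma~\ref{lem:3}(a) together with Lemma~\ref{lem:measure-approx} gives $\dbl(\E_n(T^{jn}(z)),\mu)$ small uniformly in $j$, and averaging complete $n$-blocks via Lemma~\ref{lem:measure-approx}\eqref{enum:measure-approx-3} yields $\dbl(\E_{bn}(T^{an}(z)),\mu)$ small for all $a,b\ge 0$; applying Lemma~\ref{lem:measure-in-the-orbit} to $z$ and $\nu$ with the period $n$ in the role of $p$ (so the outputs are multiples of $n$) then transfers this estimate to $\nu$. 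Letting $\eps\to0$ completes the construction. I expect the main obstacle to be the construction of $\Lambda$, where one must \emph{simultaneously} enforce the small-diameter and return conditions required by Lemma~\ref{lem:3}, the $(n,\eps)$-separation, the prescribed cardinality encoding the entropy $c$, and closeness of the block empirical measures to $\mu$, all while splicing together the ergodic components through the single chain-recurrent class.
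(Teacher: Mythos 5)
Your strategy coincides with the paper's own proof: reduce to producing, for each accuracy, one ergodic measure $\nu$ with $\dbl(\nu,\mu)$ small and $h_\nu(T)$ near $c$; decompose $\mu$ via Lemma~\ref{lem:mes:ent:approx:2}; extract separated sets inside the supports of the ergodic components via Lemma~\ref{lem:sep} and Remark~\ref{rem:sep}; \emph{thin} them to encode the target $c$; splice orbit segments through the single chain class by shadowing into a set $\Lambda$ satisfying the hypotheses of Lemma~\ref{lem:3}; then conclude with the variational principle, Lemma~\ref{lem:measure-in-the-orbit} and Lemma~\ref{lem:measure-approx}. The handling of $c=0$, the thinning idea, and the final entropy and measure estimates all match the paper.

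There is, however, a genuine gap exactly at the point you yourself flag as the main obstacle, and your construction as written does not get past it. You take one tracing point per tuple $(\gamma_1,\dots,\gamma_K)\in\Gamma_1'\times\dots\times\Gamma_K'$, with connectors ``of length at most $M$'' interposed between consecutive segments. But the length of the connector joining $T^L(\gamma_i)$ to $\gamma_{i+1}$ depends on those two points, hence varies from tuple to tuple, and under mere chain transitivity of the class one cannot prescribe connector lengths exactly (if $\supp(\mu)$ is a single periodic orbit of period $q$, the length of any chain between two given points is constrained modulo $q$). Consequently: (i) there is no single $n$ serving as a common period, so the hypothesis $\diam\bigl(\Lambda\cup T^n(\Lambda)\bigr)<\delta$ of Lemma~\ref{lem:3} cannot be verified --- for a tuple whose pseudo-orbit has period different from $n$, the point $T^n(y)$ need not return near $x_*$; and (ii) even among tuples with equal total period, the $i$-th segments may occupy different coordinate windows, so the $(L,\delta)$-separation of $\Gamma_i'$ does not transfer to $(n,\eps')$-separation of the tracing points: ``separation survives tracing'' only when the two orbits are compared over the \emph{same} time window, i.e.\ when the blocks are aligned. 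The paper closes this gap with a pigeonhole localization that your proposal lacks: cover $\supp(\mu)$ by sets $U_1,\dots,U_p$ of small diameter, choose $V_i,W_i$ so that $\Gamma'_{m,i}=\Gamma_{m,i}\cap V_i\cap T^{-m}(W_i)$ retains at least $|\Gamma_{m,i}|/p^2$ points (an entropy loss of $\tfrac{2}{m}\log p$, negligible for large $m$), and then use \emph{one fixed} connector $z_i\in W_i$ with $T^{M_i}(z_i)\in V_{i+1}$ which works simultaneously for all tuples; this forces the common period $n=Km+\sum_i M_i$ and aligned blocks. Your version could alternatively be repaired by pigeonholing the tuples over the finitely many vectors of connector lengths (losing a factor at most $M^{K+1}$, again negligible as $L\to\infty$), but some argument of this kind is indispensable and is missing from the proposal.
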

\begin{proof}
We have the following three cases depending on the entropy of $\mu$.

\noindent \textbf{Case 1:} $h_\mu(T)=0$ or $c=0$. By Theorem~\ref{thm:approx:erg}
we may approximate $\mu$ by measures supported on odometers
and every measures on odometers have zero entropy.

\noindent \textbf{Case 2:} $h_\mu(T)\in (0,\infty)$.
Fix $\eps>0$ and $c\in (0,h_\mu(T)]$.
It is enough to show that there exists an ergodic measure $\nu$
supported on an almost 1-1 extensions of odometer such that
$\dbl(\nu, \mu)<\eps$ and $|h_{\nu}(T)-c|<\eps$.

By Lemma~\ref{lem:mes:ent:approx:2},
there exist ergodic measures $\mu_i$ with $\supp(\mu_i)\subset \supp(\mu)$
for $i=1,2,\dotsc,K$ such that
\begin{equation}
  \dbl\Bigl(\frac{1}{K}\sum_{i=1}^K\mu_i,\mu\Bigr)<\frac{\eps}{8}, \label{pf.4.4-m1}
\end{equation}
and
\begin{equation}
    \Bigl\vert \frac{1}{K}\sum_{i=1}^K h_{\mu_i}(T)-h_\mu(T)\Bigr\vert<\frac{\eps}{16}. \label{pf.4.4-h1}
\end{equation}
For  for $i=1,2,\dotsc,K$, let
\begin{equation*}
    F_i=\Bigl\{\nu\in M_T(X)\colon \dbl(\mu_i,\nu)<\frac{\eps}{8}\Bigr\}.  
\end{equation*}
By Lemma~\ref{lem:sep}, there exist $\gamma>0$ and $N\in\N$ such that
for any $m>N$ there exists
\begin{equation}
    \Gamma_{m,i}\subset X_{m,F_i}\cap \supp(\mu_i) \label{pf.4.4-Gamma}
\end{equation}
which is $(m,\gamma)$-separated and satisfies
\begin{equation}
    \Bigl|\frac{1}{m}\log|\Gamma_{m,i}|-h_{\mu_i}(T)\Bigr|<\frac{\eps}{16}
\end{equation}
for $i=1,2,\dotsc,K$.
Removing sufficiently many elements from $\Gamma_{m,i}$
we may assume that
\begin{equation}
\Bigl|\frac{1}{K}\sum_{i=1}^K \frac{1}{m}\log |\Gamma_{m,i}| -c\Bigr|<\frac{\eps}{8}.\label{pf.4.4-h2}
\end{equation}

By \eqref{pf.4.4-Gamma}, for every $x\in \Gamma_{m,i}$,
\begin{equation}
    \dbl(\E_m(x),\mu_i)<\frac{\eps}{8}. \label{pf.4.4-m3}
\end{equation}
There is also $\beta>0$ such that $|\Gamma_{m,i}|\leq \beta^m$ for all $m>N$.

Let $\eta\in(0,\frac{\eps}{16})$, let $\delta\in(0,\frac{\gamma}{4})$ be provided for $\frac{\eta}{8}$
and let $\xi\in (0,\frac{\delta}{8})$ be provided for $\frac{\delta}{8}$ by the shadowing property.
Let $U_1,\ldots, U_p$ be an open cover of $\supp(\mu)$ by sets with $\diam (U_i)<\xi$.
As $\supp(\mu)/_\sim$ is a singleton,
there exists $M>0$ such that for any $U_i$ and $U_j$ there exists
$0<M_{i,j}\leq M$ with $\supp(\mu)\cap U_i\cap T^{-M_{ij}}U_j\neq\emptyset$.
When $m$ is fixed, for each $i=1,2,\dotsc,K$, using the pigeon-hole principle there are sets $V_i,W_i$
in the list $\{U_k\}_{k=1}^p$ such that
\begin{equation}
    |\Gamma_{m,i}\cap V_i\cap T^{-m}(W_i)|\geq   \frac{|\Gamma_{m,i}|}{p^2}.\label{4.x}
\end{equation}
Taking $m$ sufficiently large and putting $\Gamma'_{m,i}=\Gamma_{m,i}\cap V_i\cap T^{-m}(W_i)$,
we easily obtain by \eqref{4.x} that
\begin{equation}
    \Bigl|\frac{1}{m}\log|\Gamma'_{m,i}|-\frac{1}{m}\log|\Gamma_{m,i}|\Bigr|<\frac{\eps}{8}\label{pf.4.4-h3}
\end{equation}
for $i=1,2,\dotsc,K$. We may additionally assume that
\[
    \frac{(4+\log\beta)M}{m}<\frac{\eps}{8} \quad \textrm{ and }\quad M<m.
\]

Let $s_i=|\Gamma'_{m,i}|$ and enumerate  $\Gamma'_{m,i}$
as $\{y^{(i)}_0, y^{(i)}_1,\dotsc,y^{(i)}_{s_i-1}\}$.
For $i=1,2,\dotsc,K-1$, we pick $z_i\in W_i$ and $0<M_i\leq M$ with $T^{M_i}(z_i)\in V_{i+1}$.
We also pick $z_K\in W_K$ and $0<M_K\leq M$ with $T^{M_K}(z_K)\in V_1$.
Let us denote
\[
    n=Km+\sum_{i=1}^{K}M_{i}
\]
and observe that $Km<n\leq K(m+M)$.
For each choice of indexes $0\leq i_j <s_i$ for $i=1,\ldots, K$,
define a sequence of points in $X$, denoted by  $\{y_j\colon 0\leq j\leq n-1\}$,  as follows
\[y^{(1)}_{i_1},T(y^{(1)}_{i_1}),\dotsc,T^{m-1}(y^{(1)}_{i_1}),z_{1},Tz_{1},\dotsc,T^{M_{1}-1}z_{1},\]
\[y^{(2)}_{i_2},T(y^{(2)}_{i_2}),\dotsc,T^{m-1}(y^{(2)}_{i_2}),z_{2},Tz_{2},\dotsc,T^{M_{2}-1}z_{2},\]
\[...\quad ...\quad ...\]
\[y^{(K)}_{i_1},T(y^{(K)}_{i_2}),\dotsc,T^{m-1}(y^{(K)}_{i_K}),z_{K},Tz_{K},\dotsc,T^{M_{K}-1}z_{K}.\]

By Lemma~\ref{lem:measure-approx}\eqref{enum:measure-approx-1} we obtain that
\begin{equation}\label{pf.4.4-m4}
    \begin{split}
    \dbl\Bigl(\frac{1}{K}\sum_{i=1}^K\E_m(y^{(i)}_{i_1}),\frac{1}{n}\sum_{j=0}^{n-1}\delta_{y_j}\Bigr)    
    &\leq \frac{\Bigl(Km+\sum\limits_{i=1}^K(m+M_i)\Bigr)\sum\limits_{i=1}^K M_i+Km
    	\sum\limits_{i=1}^K M_i}{K m \sum\limits_{i=1}^K(m+M_i)}\\
    &\leq \frac{(2m+M)M+Mm}{m^2}\leq \frac{4M}{m}<\frac{\eps}{8}.
    \end{split}
\end{equation}
Let $\alpha$ be the periodic sequence
\[
    \alpha=(y_0,y_1,\dotsc,y_{n-1},y_0,y_1,\dotsc,y_{n-1},y_0,\dotsc).
\]
Then $\alpha$ is a $\xi$-pseudo-orbit.
Pick a point $y=y(i_1,i_2,\dotsc,i_K)$ which $\frac{\delta}{8}$-traces $\alpha$, that is
\[
  d(T^{jn+i}(y),y_i)<\frac{\delta}{8}
\]
for every $j\geq 0$ and $i=0,1,\dotsc,n-1$.
By Lemma~\ref{lem:measure-approx}\eqref{enum:measure-approx-2}, we get
\begin{equation}
  \dbl\Bigl(\E_n(y),\frac{1}{n}\sum_{j=0}^{n-1}\delta_{y_j}\Bigr)
  <\frac{\delta}{8}<\frac{\eps}{8}. \label{pf.4.4-m5}
\end{equation}
Let $s=\prod_{i=1}^K s_i$ and $\Gamma$ be the collection of all those $y(i_1,i_2,\dotsc,i_K)$.
Take any two distinct points $p,q\in \Gamma$, say $p=y(i_1,\ldots,i_K)$, $q=y(j_1,\ldots, j_K)$
	and $i_u\neq j_u$ for some $1\leq u \leq K$. Then
	$$
	d(T^{um+\sum_{i=1}^{u-1}M_i+t}(p),T^t(y^{(u)}_{i_u}))<\frac{\delta}{8}\quad , \quad 	d(T^{um+\sum_{i=1}^{u-1}M_i+t}(q),T^t(y^{(u)}_{j_u}))<\frac{\delta}{8}
	$$
	for each $t=0,1,\ldots,m-1$.
Recall that each $\Gamma'_{m,i}$ is $(m,\gamma)$-separated, hence there is $0\leq v < m$
such that $d(T^v(y^{(u)}_{i_u}),T^v(y^{(u)}_{j_u}))>\gamma$ and therefore
$$
	d(T^{um+\sum_{i=1}^{u-1}M_i+v}(p),T^{um+\sum_{i=1}^{u-1}M_i+v}(q))\geq  \gamma-\frac{\delta}{4}>\frac{\gamma}{2}.
$$
This proves that
$\Gamma$ is $(n,\frac{\gamma}{2})$-separated  
and clearly also $|\Gamma|=s$.
We also have that
\[
    \diam(\Gamma\cup T^n(\Gamma))<\diam(V_1)+2\cdot\frac{\delta}{8}<\delta.
\]
Applying Lemma~\ref{lem:3} to $\Gamma$, $n$, $\frac{\gamma}{2}$, $\delta$ and $\eta$,
there exists a regularly recurrent point $z\in X$ such that
for every $j\geq 0$ there is $q_j\in\Gamma$ such that
\begin{equation}
    d(T^{jn+t}(z),T^t(q_j))\leq2\eta<\frac{\eps}{8}\label{4.c*}
\end{equation}
for $t=0,\ldots,n-1$ and
\begin{equation}
    (1-\eta)\frac{\log(s)}{n} \leq \htop (\overline{\Orb(z,T)},T) \leq \frac{\log(s)}{n}. \label{pf.4.4-h4}
\end{equation}
By Lemma~\ref{lem:measure-approx}\eqref{enum:measure-approx-2} and \eqref{4.c*},
for every $j\geq 0$ there is $q_j\in\Gamma$ such that
\begin{equation}
    \dbl(\E_{n}(T^{jn}(z)), \E_{n}(q_j))<\frac{\eps}{8}, \label{pf.4.4-m6}
\end{equation}
By the variational principle of the topological entropy,
there exists an ergodic measure $\nu$ supported on $\overline{\Orb(z,T)}$  such that
\begin{equation}
    (1-2\eta)\frac{\log(s)}{n}<h_\nu(T)\leq \htop (\overline{\Orb(z,T)},T)\leq \frac{\log(s)}{n}. \label{pf.4.4-h5}
\end{equation}
For sufficiently small positive number $\eta$ we have
\begin{equation}
    2\eta\frac{\log(s)}{n}=\Bigl|(1-2\eta)\frac{\log(s)}{n}
        -\frac{1}{n}\sum_{i=1}^k \log |\Gamma'_{m,i}|\Bigr|<\frac{\eps}{8}. \label{pf.4.4-h6}
\end{equation}
On the other hand
\begin{equation}
    \Bigl|\frac{1}{n} \log |\Gamma'_{m,i}|-\frac{1}{Km}\sum_{i=1}^k \log |\Gamma'_{m,i}|\Bigr|
        \leq \frac{M}{n}\log \beta<\frac{\eps}{8}.  \label{pf.4.4-h7}
\end{equation}
Summing up formulas~\eqref{pf.4.4-h1}, \eqref{pf.4.4-h2}, \eqref{pf.4.4-h3},
\eqref{pf.4.4-h4}, \eqref{pf.4.4-h5}, \eqref{pf.4.4-h6} and \eqref{pf.4.4-h7}, we get
\begin{equation}
    |h_\nu(T)-c|<\eps.\label{4.y}
\end{equation}
By Lemma~\ref{lem:measure-in-the-orbit}
there exist $j_1,j_2\in\N$ such that
\begin{equation}
  \dbl(\E_{j_1n}(T^{j_2n}(z)),\nu)<\frac{\eps}{8}. \label{pf.4.4-m7}
\end{equation}
Summing up formulas~\eqref{pf.4.4-m1}, \eqref{pf.4.4-m3}, \eqref{pf.4.4-m4}, \eqref{pf.4.4-m5}
and \eqref{pf.4.4-m6} and Lemma~\ref{lem:measure-approx}\eqref{enum:measure-approx-3}, we get
\[
    \dbl(\mu,\nu)<\eps.
\]
The proof of this case is finished.

\noindent \textbf{Case 3:} $h_\mu(T)=\infty$. This case is similar to the Case 2,
with the main change in \eqref{pf.4.4-h1}, which by Lemma~\ref{lem:mes:ent:approx:2}
has the form
\begin{equation}
    \frac{1}{K}\sum_{i=1}^K h_{\mu_i}(T)>\frac{8}{\eps}.
\end{equation}
All the other estimates have to be adjusted accordingly, leading to $h_\nu(T)>1/\eps$ in \eqref{4.y}
in the case of $c=\infty$.
We leave the details of the proof in this case to the reader.
\end{proof}

Now we are ready to prove Corollary C.
\begin{proof}[Proof of Corollary C]
(1) Fix any $0\leq c<\htop(T)$ and $\eps>0$.
Let $\mu$ be an invariant measure with $h_\mu(T)\geq c$ and $\eta>0$.
If $h_\mu(T)>c$, then by Theorem B
there exists an ergodic measure $\nu$ supported on an almost 1-1 extension of odometer such that
$\dbl(\mu,\nu)<\eta$ and $c<h_\nu(T)<c+\eps$.
If $h_\mu(T)=c$, then by the variational principle of the topological entropy, pick
an invariant measure $\mu'$ such that $c<h_{\mu'}(T)\leq \htop(T)$,
and next pick a sufficiently small number $t\in (0,1)$ such that $\dbl(\mu, \mu'')<\frac{\eta}{2}$,
where $\mu'' = (1-t)\mu + t\mu'$.
Observe that
$h_{\mu''}(T)= (1-t)h_\mu(T)+t h_{\mu'}(T)>c$.
By Theorem B again,
there exists an ergodic measure $\nu$ support on an almost 1-1 extensions of odometer such that
$\dbl(\mu'',\nu)<\frac{\eta}{2}$ and $c<h_\nu(T)<c+\eps$.
So $\dbl(\mu,\nu)<\eta$.

(2)
Now assume that the entropy function is upper semi-continuous.
Fix any $0\leq c<\htop(T)$ and denote
\[\mathcal{V}=\{\mu\in M_T^{erg}(X)\colon h_\mu(T)\geq c\}\ \text{ and }\
\mathcal{V}(c)=\{\mu\in M_T(X)\colon h_\mu(T)=c\}.\]
For every $\eps>0$, put
$\mathcal{V}(c,\eps)=\{\mu\in M_T(X)\colon c\leq h_\mu(T)<c+\eps\}$.
By the first part of the conclusion we know that for every $\eps>0$,
$\mathcal{V}(c,\eps)$ is dense in $\mathcal{V}$.
As the entropy function is upper semi-continuous, $\mathcal{V}(c,\eps)$ is open in $\mathcal{V}$.
Then $\mathcal{V}(c)$ is residual in $\mathcal{V}$,
because $\mathcal{V}(c)=\bigcap_{n=1}^\infty \mathcal{V}(c,\tfrac{1}{n})$.
\end{proof}

\begin{rem}
Note that Corollary C does not contain the case $c=\htop(X,T)$.
\end{rem}

As mentioned in the introduction,
if an expansive dynamical system has the periodic specification property
(or its weaker version, e.g. approximate product property \cite{PS})
then each invariant measure is entropy-approachable by ergodic measures.
It is natural to ask that whether the expansiveness is redundant.
More precisely, we have the following question.

\begin{ques}\label{que:app}
Assume that $(X,T)$ has approximate product property (or  periodic specification property).
Is it true that for every invariant measure $\mu$ there are ergodic measures $\nu_n\in M_T(X)$
such that  $\lim_{n\to \infty}\nu_n=\mu$ and $\lim_{n\to \infty}h_{\nu_n}(T)=h_\mu(T)$?
\end{ques}

\section{Examples of shadowing beyond specification property}\label{sec:5}

When $(X,T)$ is a topologically mixing map with the shadowing property (and the entropy function is upper semi-continuous), Theorems~A and B do not lead us too much beyond what is already known,
because results for dynamical systems with the specification property can be applied. Still, we can say more about the topological
structure of supports of ergodic measures, but results on approximation of entropy follow by earlier results.
The aim of this section is two fold. First, we will provide natural examples without specification property. These examples will also show that not much more can be said about dynamics on supports of approximating measures.
Second, we will be able to highlight possible problems that may arise when applying Theorem~\ref{thm:chain-rec:ent}
 to measures supported on chain-recurrent classes, that is when there is lack of transitivity.

Consider the family of tent maps $f_\lambda\colon [0,1]\to [0,1]$, with $\lambda\in [\sqrt{2},2]$
given by the formula:
$$
f_\lambda(x)=\begin{cases}
\lambda x,& 0\leq x\leq 1/2\\
\lambda(1-x),& 1/2\leq x \leq 1
\end{cases}.
$$
By Theorem~6.1 in \cite{Yorke} the set  $\mathcal{S}\subset [\sqrt{2},2]$ for which $f_\lambda$ has the shadowing property is of full Lebesgue
measure (however has complement uncountable in any open subset of $[\sqrt{2},2]$). It is also known that $2\in \mathcal{S}$
but not much more can be said about exact structure of the set $\mathcal{S}$.

It is not hard to see that $\Omega(T_\lambda)=\{0\}\cup C_\lambda$ where $C_\lambda$ is the \textit{core} of tent map defined by $C_\lambda=[f_\lambda^2(1/2),f_\lambda(1/2)]$. Since any surjective map with the shadowing property restricted to its non-wandering set also has the shadowing property (e.g. see \cite[Theorem~3.4.2.]{AH}), the tent map restricted to its core $T_\lambda=f_\lambda\colon C_\lambda\to C_\lambda$  has the shadowing property.
 Clearly, after renormalization we can view $T_\lambda$ as a map on $[0,1]$, and then it is given by the formula
 $$
 T_\lambda(x)=\begin{cases}
 \lambda x+(2-\lambda ),& 0\leq x\leq \frac{\lambda-1}{\lambda}\\
 -\lambda x+\lambda,& \frac{\lambda-1}{\lambda}\leq x \leq 1
 \end{cases}.
 $$
Note that each $T_\lambda$ is transitive (e.g. see \cite[Remark~3.4.17]{BB}). But it is also easy to see that $T_\lambda([0,p])\neq [p,1]$, where $p$ is the unique fixed point of $T_\lambda$, hence $T_\lambda$ is topologically mixing (e.g. see \cite[Theorem~2.1]{HKO}).

\begin{exam}\label{ex:shad:nospec}
Let $Y=[0,1]\times G_\mathbf{s}$ for some odometer $(G_\mathbf{s},T_{\mathbf{s}})$ with $|G_\mathbf{s}|>1$ and let $F=T_\lambda \times T_\mathbf{s}$, for some $\lambda\in \mathcal{S}$. Then $(Y,F)$ is a transitive dynamical system and it has shadowing property as
a product of maps with the shadowing property. But $(Y,F)$ is not mixing, as it has a nontrivial equicontinuous factor $(G_\mathbf{s},T_{\mathbf{s}})$, in particular it does not have specification property.
\end{exam}
\begin{rem}
Note that any minimal subsystem of $(Y,F)$ in Example~\ref{ex:shad:nospec} has odometer $(G_\mathbf{s},T_{\mathbf{s}})$ as its factor, hence in Theorems~A  and B we cannot replace odometers by other types of minimal systems (e.g. weakly mixing).
\end{rem}

\begin{rem}
It is clear that in construction of $(Y,F)$ in Example~\ref{ex:shad:nospec} the only essential ingredient is that $([0,1],T_\lambda)$
	has the shadowing property, so we can replace it by any system $(X,T)$ with the shadowing property and the same argument will work. However we will need special structure of Example~\ref{ex:shad:nospec} to construct a map on the unit square later in this section.
\end{rem}

Now, we will provide another example (this time on the unit square), which will give us a better insight into statements of Theorem~\ref{thm:chain-rec:ent}.

First, consider the map $F_D$ of type $2^\infty$ (i.e. map with periodic points of primary period $2^n$ for each $n$ and no other periods), introduced by Delahaye in \cite{Dela} (see also \cite{RuetteSurvey}).
This map is defined on $[0,1]$ by the following rules (its graph is presented on Figure~\ref{pic:f}):
\begin{itemize}
	\item $F_D(0)=\frac{2}{3},f(1)=0$,
	\item $F_D(1-\frac{2}{3^n})=\frac{1}{3^{n-1}}$, and $F_D(1-\frac{1}{3^n})=\frac{2}{3^{n+1}}$ for all $n\geq 1$,
	\item $F_D$ is linear between the above points.
\end{itemize}

\begin{figure}[!htbp]
	\begin{center}
		\includegraphics[width=0.4\textwidth]{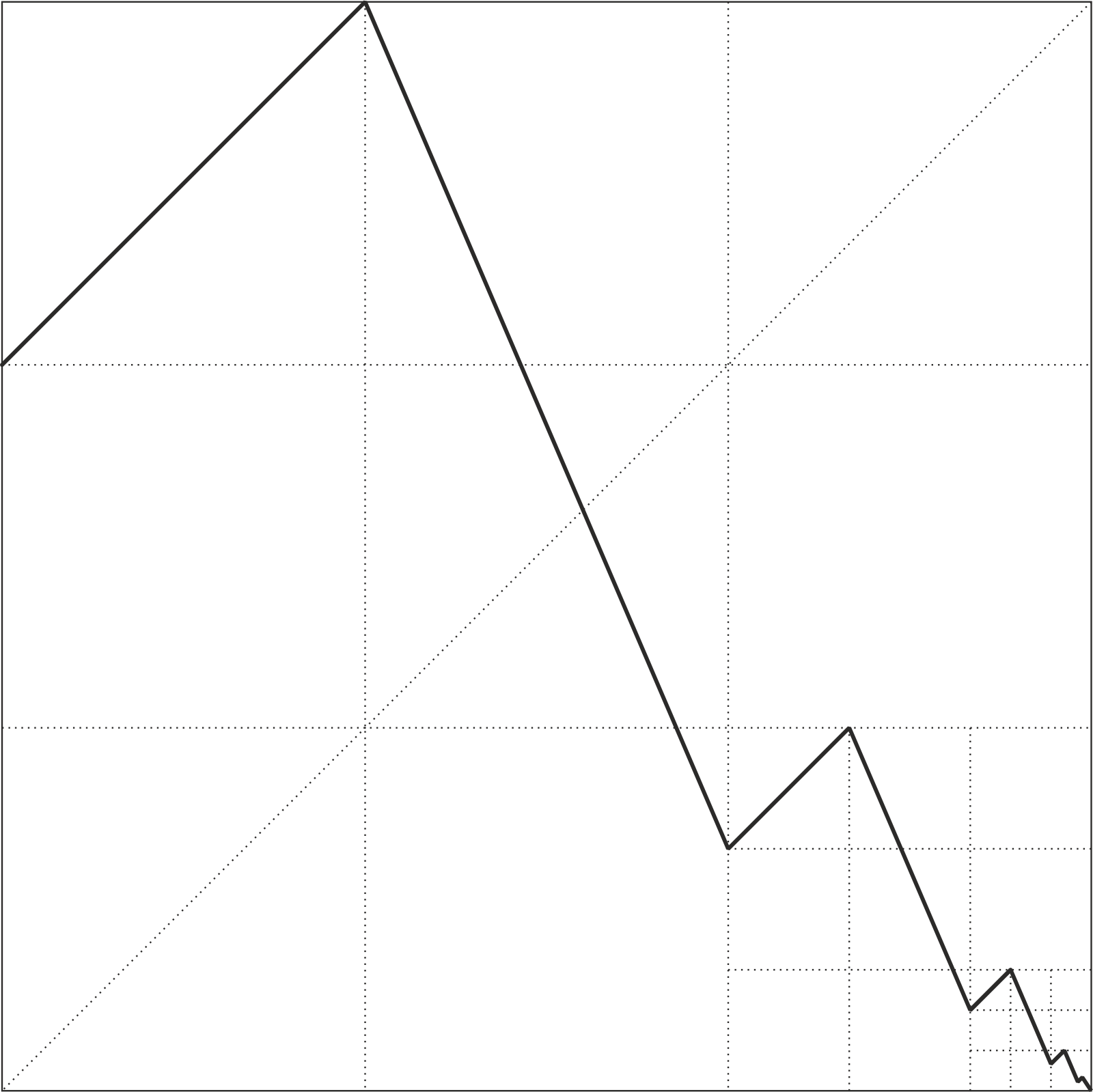}
		\includegraphics[width=0.4\textwidth]{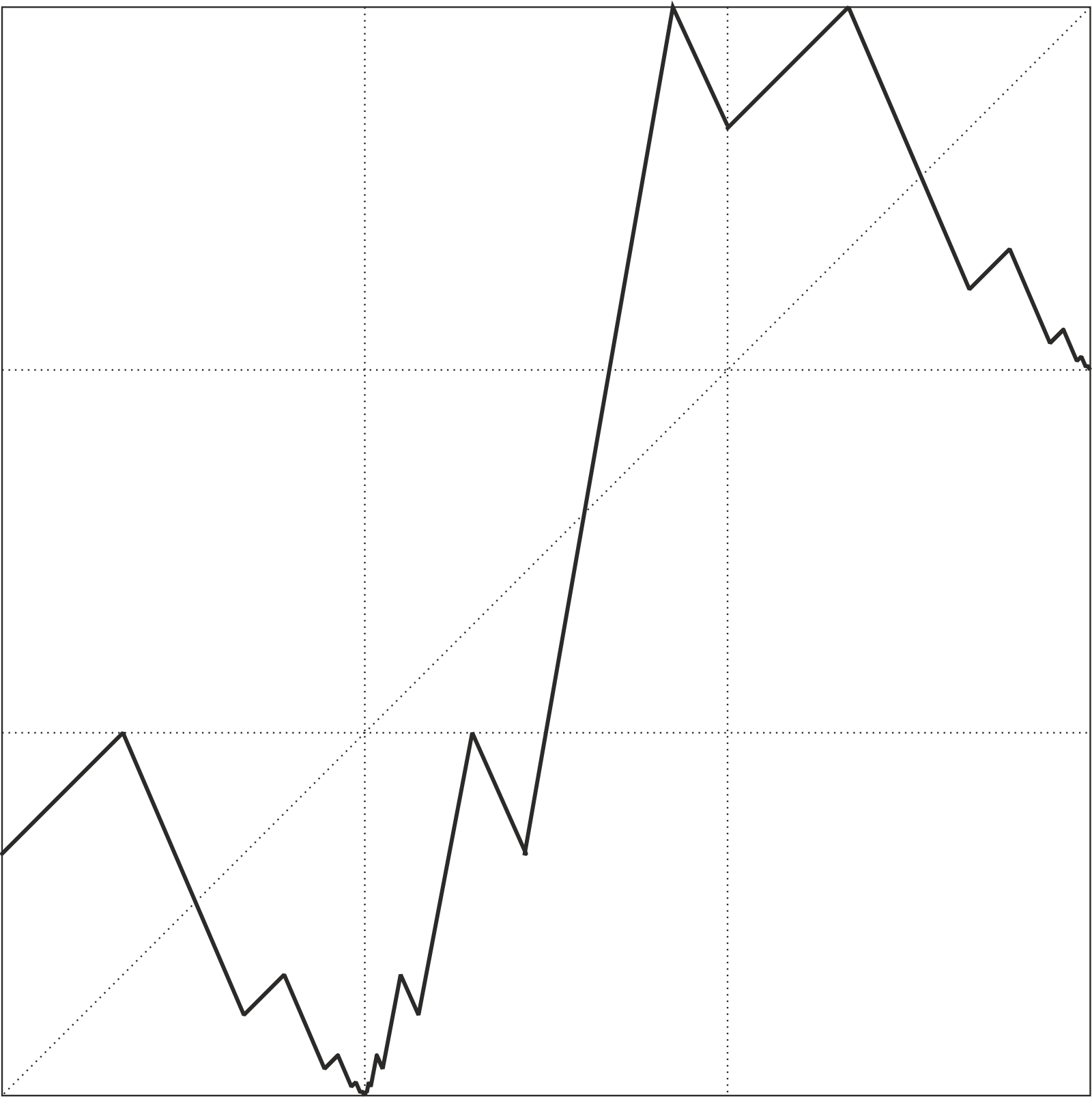}
		\begin{minipage}[c]{0.8\textwidth}
			\begin{center}
				\caption{Graph of Delahaye map $F_D$ and its second iterate $F_D^2$}\label{pic:f}
			\end{center}
		\end{minipage}
	\end{center}
\end{figure}
If we analyze the graph of $F_D^2$ then the following is evident:
\begin{enumerate}
\item\label{eq:TD:p1} Both $F^2_D|_{[0,1/3]}$ and $F^2_D|_{[2/3,1]}$ after renormalization became again $F_D$,
\item Set $(1/2, 2/3)$ contains a unique repelling fixed point $p$ such that:
\begin{enumerate}
\item For every $x\in (1/3,2/3)$, $x\neq p$ there exists $n$ such that $F_D^n(x)\not \in (1/3,2/3)$.
\item $F_D^{-1}(\{p\})=\{p\}$.
\end{enumerate}
In particular, if $\liminf_{n\to\infty} |F_D^n(x)-p|=0$ then $x=p$.
\end{enumerate}
It is also clear that using \eqref{eq:TD:p1}, properties of the fixed point $p$ above can be restated
for any other periodic point of $F_D$ in the analogous manner.
Using  \eqref{eq:TD:p1} it can be also proved that $F_D$ contains an infinite $\omega$-limit set conjugated to an odometer (e.g. see \cite{RuetteSurvey}).
It is a unique infinite $\omega$-limit set of $F_D$.

Before we can explain why $F_D$ has the shadowing property, we will need three definitions form \cite{Kuchta}.
If $a,b\in [0,1]$ and $a\neq b$ then we write $\seq{a,b}$ to denote interval spanned by $a$ and $b$.
\begin{defn}
Let $p$ be a periodic point of a map $f\colon [0,1]\to [0,1]$ and let $q\in [0,1]$, $q\neq p$.
We say that one-sided neighborhood $\seq{p,q}$ of $p$ is $f$-$m$-nontrapping if $f^m(p)=p$ and
for every $x\in \seq{p,q}$ we have $x\in f^m(\seq{p,q})$.
\end{defn}

\begin{defn}\label{def:ntr}
A continuous function $f\colon [0,1]\to [0,1]$ is \textit{nondegenerate} if the following condition holds:

If $x\in [0,1]$, $p$ is a periodic point of $f$, $\seq{p,q}$ is an $m$-$f$-nontrapping neighborhood of $p$ and $\lim_{n\to\infty} f^{nm}(x)=p$,
then for every open neighborhood $O_x$ of $x$ and for all $z_1,z_2\in \seq{p,q}\setminus \{p,q\}$ there is an $n\geq 0$ such that $\seq{z_1,z_2}\subset f^{mn}(O_x)$.
\end{defn}

Consider again the fixed point $p$ of $F_D$. The only candidate for $x$ in Definition~\ref{def:ntr} is $x=p$. But it is clear (see Figure~\ref{pic:f}) that for every open interval $J\ni p$
and every $m$ there is $k>0$ such that $F_D^{mk}(J)\supset [1/3,2/3]$ and therefore
$$
F_D^{2mk}(J)=F_D^{mk-2}(F_D^2(F_D^{mk}(J)))=F_D^{mk-2}(F_D^2([1/3,2/3])=F_D^{mk-2}([0,1])=[0,1].
$$
It is also clear that if $u\neq p$ is periodic point of $F_D$, then for any $F_D$-$m$-nontrapping neighborhood $\seq{u,q}$ of $u$ we must have $\seq{u,q}\cap (1/3,2/3)=\emptyset$ (see graph of $F_D^2$ on Figure~\ref{pic:f}).
Therefore by renormalization provided by \eqref{eq:TD:p1}, verification of conditions in Definition~\ref{def:ntr} can be reduced to the case of $p$ and $F_D$. This shows that $F_D$ is a nondegenerate map.

An interval $J$ is \textit{periodic}, if there exists $k>0$ such that $f^k(J)=J$. We denote by $\Per(J)$
the least such integer $k$.

\begin{defn}
We say that a continuous function $f\colon [0,1]\to [0,1]$ is a \textit{shrink function} if for every sequence $\{J_k\}_{k=1}^\infty$ of periodic intervals such that $J_{k+1}\subset J_k$ and $\Per(J_k)<\Per(J_{k+1})$
we have that $\lim_{k\to \infty} \diam (J_k)=0$.
\end{defn}
Again by \eqref{eq:TD:p1}, it is not hard to see that $F_D$ is a shrink function. Therefore $F_D$ has the shadowing property, by the Main Theorem from \cite{Kuchta}, which we present below.

\begin{thm}
Let $f\colon [0,1]\to [0,1]$ be a continuous function with zero topological entropy. Then the following conditions are equivalent:
\begin{enumerate}
\item $([0,1],f)$ has the shadowing property,
\item $f$ is a nondegenerate shrink function.
\end{enumerate}
\end{thm}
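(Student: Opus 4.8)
The plan is to prove the two implications separately, taking as the main structural input the classification of zero-entropy interval maps. Since $\htop(f)=0$, by the Sharkovskii and Misiurewicz–Bowen–Franks theorems every periodic point of $f$ has period a power of $2$, and $f$ admits a nested sequence of cycles of periodic intervals on which it acts, after passing to a suitable power, by renormalization: for each periodic interval $J$ in the hierarchy, $f^{\Per(J)}|_J$ is again a zero-entropy interval map. I would set up this periodic-interval decomposition first, since both directions are most naturally phrased in terms of the combinatorics of the itinerary of a point through the nested periodic intervals.

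For the implication $(1)\Rightarrow(2)$ I would argue by contraposition. To see that shadowing forces the shrink property, suppose there is a nested sequence of periodic intervals $J_1\supset J_2\supset\cdots$ with $\Per(J_k)$ strictly increasing but $\diam(J_k)\to d>0$. Then $J_\infty=\bigcap_k J_k$ is a nondegenerate interval carrying rigid odometer-type return dynamics; choosing $\varepsilon<d$ together with the gaps separating the intervals of each cycle, I would build a $\delta$-pseudo-orbit that violates the rigid combinatorial return pattern, for instance by jumping into a ``wrong'' interval of some high cycle, which no genuine orbit can follow, contradicting shadowing. To see that shadowing forces nondegeneracy, suppose the covering condition of Definition~\ref{def:ntr} fails: there are a periodic point $p$, an $m$-nontrapping neighborhood $\seq{p,q}$, a point $x$ with $f^{mn}(x)\to p$, a neighborhood $O_x$, and $z_1,z_2\in\seq{p,q}\setminus\{p,q\}$ with $\seq{z_1,z_2}\not\subset f^{mn}(O_x)$ for every $n$. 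Following the true orbit of $x$ toward $p$ and then using the nontrapping property to splice in a small jump aimed at $\seq{z_1,z_2}$, I would produce a pseudo-orbit whose prescribed itinerary cannot be realized by any genuine orbit meeting $O_x$, again contradicting shadowing.

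For the implication $(2)\Rightarrow(1)$ I would trace pseudo-orbits hierarchically. Given $\varepsilon>0$, the shrink property yields a level at which every periodic interval of sufficiently large period has diameter below $\varepsilon$; at that coarse resolution the dynamics is merely a cyclic permutation of finitely many intervals, for which any sufficiently fine pseudo-orbit automatically respects the combinatorics, so tracing at that level is a finite-state matter. The genuine work is to promote this coarse tracing to an actual point, and here the nondegeneracy condition supplies the surjectivity I need: whenever a pseudo-orbit approaches a periodic point and must then be redirected, the covering $\seq{z_1,z_2}\subset f^{mn}(O_x)$ guarantees that suitable images exist to hit the prescribed targets, so I can correct accumulated errors by selecting the right branch at each renormalization stage and pass to a limit point that $\varepsilon$-traces.

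The main obstacle I anticipate is exactly this final step in $(2)\Rightarrow(1)$: controlling the accumulation of tracing errors across the infinitely many renormalization levels while converting the abstract covering statement of nondegeneracy into a concrete, uniformly small tracing error. The nondegeneracy hypothesis is tailored precisely to the points whose orbits converge to periodic points, which are the only places where the otherwise rigid zero-entropy dynamics permits a pseudo-orbit to branch, so the crux is to show that its covering conclusion is strong enough to absorb every such branching simultaneously, with the shrink property ensuring that the resulting corrections occur at geometrically small scales.
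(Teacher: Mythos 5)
First, a point of orientation: the paper does not prove this theorem at all. It is Kuchta's theorem, quoted verbatim from \cite{Kuchta} and used as a black box for a single purpose, namely to conclude that the Delahaye map $F_D$ (having zero entropy and being a nondegenerate shrink function) has the shadowing property. So there is no proof in the paper to compare yours against; your proposal has to stand on its own as a reconstruction of Kuchta's result, and as written it does not.

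The decisive gap is the one you name yourself: in the implication $(2)\Rightarrow(1)$ you never actually trace a pseudo-orbit. The claim that at a coarse enough scale ``the dynamics is merely a cyclic permutation of finitely many intervals'' is false: the cycles of periodic intervals of a zero-entropy interval map do not cover $[0,1]$; there are points whose orbits spiral into periodic orbits, and the whole difficulty of the theorem lives exactly there --- this is the situation Definition~\ref{def:ntr} is designed for. Your plan is to use the covering property $\seq{z_1,z_2}\subset f^{mn}(O_x)$ to ``correct accumulated errors,'' but you give no mechanism for converting this covering statement (an existential statement about \emph{some} iterate $n$, valid only for points $x$ attracted to $p$) into the choice of a single genuine orbit staying within $\eps$ of a given pseudo-orbit for all time; you explicitly flag this as an obstacle you do not know how to overcome, and a proposal whose central step is declared open is a strategy, not a proof. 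The direction $(1)\Rightarrow(2)$ also hides real work. For the shrink condition, ``jumping into a wrong interval of some high cycle'' is not obviously untraceable, since members of deep cycles need not be separated by gaps comparable to $\eps$; the genuine obstruction comes from the fact that the images $f^i\bigl(\bigcap_k J_k\bigr)$ are pairwise disjoint and hence have diameters tending to $0$ while $\bigcap_k J_k$ itself has length at least $d$, and one must turn this into a contradiction about where a tracing orbit is forced to start --- an argument you still have to make. For nondegeneracy, splicing the forward orbit of $x$ into a backward $f^m$-orbit of a point of $\seq{z_1,z_2}$ (which the nontrapping property supplies inside $\seq{p,q}$) requires that such backward orbits come $\delta$-close to the forward orbit of $x$, i.e.\ accumulate at $p$; this is not automatic from $f^m(\seq{p,q})\supset\seq{p,q}$, and it is precisely where the zero-entropy structure and the exact form of the definitions must be exploited. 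Until these three constructions are carried out, the proposal remains an outline of intentions rather than a proof.
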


\begin{exam}\label{ex:2}
Let $X=[0,1]^2$ and let $T=F_D\times T_\lambda$ for some $\lambda\in \mathcal{S}$. Then $(X,T)$ is a dynamical system with the shadowing property.
We examine chain recurrent sets of $(X,T)$
If we denote $Q=[x]_\sim$ for some $x\in CR(X,T)$ then $F_D|_Q$
can be presented as a product $T_\mathbf{s}\times T_\lambda$ for some odometer $(G_{\mathbf{s}},T_\mathbf{s})$, where $|G_{\mathbf{s}}|=2^n$ for some $n\geq 0$
or $|G_{\mathbf{s}}|=+\infty$.
Thus by Theorem~B we know that each $\{\mu \in M_T(X) : \supp(\mu) \subset [x]_\sim\}$ is a Paulsen simplex, because each $F_D|_Q$ has the shadowing property (see Example~\ref{ex:shad:nospec}).

By Theorem~\eqref{thm:chain-rec:ent} we can approximate every invariant measure $\mu$ with $\supp(\mu) \subset Q$ by an ergodic measure supported on an almost 1-1 extension of an odometer. But note that infinite $\omega$-limit set $A$ for $F_D$ can be approximated (in the sense of Hausdorff metric) arbitrarily close by periodic orbits
and clearly measures supported on these orbits approximate Haar measure on $A$. Then there are plenty of ergodic measures for $T$ with supports outside $A\times [0,1]$ which can approximate any
 invariant measure supported on $A\times [0,1]$.
In other words, we may not hope that any of ergodic measures $\mu_n$ provided by Theorem~\eqref{thm:chain-rec:ent}
intersects the chain-recurrent class defined by $\supp(\mu)$. In fact supports of measures $\supp(\mu_n)$ for distinct $n$ can belong to pairwise different classes of the chain-recurrent relation $\sim$.
\end{exam}

\begin{rem}
While there are numerous smooth maps of type $2^\infty$ on $[0,1]$, they do not have obvious regular structure on graph of higher iterates, similar to that in Delahaye example (see Figure~\ref{pic:f}). While we believe that there exists a smooth example on a $2$-dimensional manifold with the structure of invariant measures (and chain recurrent sets) similar to that in Example~\ref{ex:2}, we do not see immediate construction of such a map.
\end{rem}

\section{Shadowing and measures of maximal entropy}\label{sec:6}
It was proved in~\cite[Theorem~2.1]{PS} that Question~\ref{que:app} has a positive answer
if we assume some weak form of expansivity (e.g. asymptotically $h$-expansive),
because in this case  the entropy function is upper semi-continuous.
It is also not hard to verify that if the entropy function is upper semi-continuous
then there exists an ergodic measure $\nu$ with maximal entropy, that is $h_\nu(T)=\htop(T)$.
The aim of this section is the construction of a dynamical system which
is transitive and has the shadowing property, but without invariant measure with maximal entropy, proving that way Theorem~D.
This shows that the case of non-expansive dynamical systems with the shadowing property is more delicate than the hyperbolic case.

Before we can start the proof, we will need a few facts on shifts on infinite alphabets
and their representations. Fix some positive integer $N$ and denote $P=2^N+1$.
We will define an infinite graph $G$ on a countable set of vertexes
\begin{equation}\label{def:VG}
    V(G)=\{u\}\cup \bigcup_{n=1}^\infty \{ v_k^{n,i} : 1\leq i \leq a(n), 1\leq k <n\}\cup \bigcup_{n=P}^\infty\{w_k^n : 1\leq k< n\}
\end{equation}
where $a(n)$ is the following sequence:
$a(1)=1$,  $a(2^N)=2^{2^N-N}-1$,  $a(2^k)=2^{2^k-k}$ for $k\geq 2$ and $k\neq N$,
 and $a(n)=0$ for all other arguments.

For technical reasons denote $v_0^{n,i}=v_n^{n,i}=u$ for $i=1,\ldots, a(n)$, and $w_0^n=w_n^n=u$.
We add the following edges in $G$:
\begin{enumerate}
\item there is edge $v_{k-1}^{n,i}\to v_{k}^{n,i}$ for $k=1,\ldots,n$,
$i=1,\ldots,a(n)$ and every $n\geq 1$,
\item there is edge $w_{k-1}^{n}\to w_{k}^{n}$ for $k=1,\ldots,n-1$ and every $n\geq P$.
\end{enumerate}
We denote by $\Gamma_G$ the set of bi-infinite paths on $G$, that is
\begin{equation}\label{def:GG}
    \Gamma_G=\{(v_n)_{n\in \Z} : v_n\to v_{n+1} \text{ in }G\text{ for every }n\in\Z\}
    \subset (V(G))^{\mathbb{Z}}.
\end{equation}
We endow $\Gamma_G$ with standard action of left shift $\sigma(x)_i=x_{i+1}$ for all $i\in \Z$.
We refer the  reader to \cite{Kit} for more details on countable Markov shifts.

As usual, we say that a sequence of vertexes $u_0,\ldots, u_n\in V(G)$ is a \emph{path} on $G$,
if for every $i=1,\ldots,n$ there is an edge $u_{i-1}\to u_i$.
Path with additional condition $u_0=u_n$ is a \emph{cycle}.
We say that the graph $G$ is \emph{strongly connected}
if there is a path between any two vertexes $u,v\in V(G)$.
Enumerate elements in $V(G)=(q_n)_{n=1}^\infty$ in such a way that:
\begin{enumerate}
\item $q_1=u$,
\item if $q_i=v_k^{n,r}$ and $q_j=v_{l}^{m,t}$ and $n<m$ then $i<j$,
\item if $q_i=w_k^{n}$ and $q_j=w_{l}^{m}$ and $n<m$ then $i<j$.
\end{enumerate}
If we identify $q_n=1/n\in [0,1]$ then putting $q_0=0$
we obtain one point compactification of $(q_n)_{n=0}^\infty$.
Then we may view $\Gamma_G$ as a subset of $\mathbb{X}=[0,1]^\Z$ endowed with the metric
$$
d(x,y)=\sum_{i\in \Z}4^{-|i|}|x_i-y_i|.
$$
Note that $(\mathbb{X},d)$ is a compact metric space.

Furthermore, taking the closure of $\Gamma_G$ in $\{q_n : n=0,1,\ldots\}^\Z$
we see that $\overline{\Gamma_G}=\Gamma_G\cup Q$ where $Q$ contains
$0^\infty=(\ldots000\ldots)$, points $\ldots 00q$ and $p00\ldots$
where $q$ (resp.\ $p$) is infinite to the right (resp.\ to the left) path on $G$ starting at vertex $u$
and points of the form $\ldots 00r00\ldots$ where $r$ is a finite path on $G$ starting and ending at $u$.
Note that $p00\ldots$ and $\ldots 00r00\ldots$ are asymptotic to $0^\infty$
and $\ldots 00q$ is asymptotic to a point completely contained in $\Gamma_G$.
This implies that the only $\sigma$-invariant measure
which was not visible on $\Gamma_G$ is supported on the point $0^\infty$.
Then ergodic measures with positive entropy on $(\Gamma_G,\sigma)$ coincide with
ergodic measures with positive entropy supported for $(\overline{\Gamma_G},\sigma)$. 

Let $p_{uv}^G(n)$ be the number of paths of length $n$ between vertexes $u$ and $v$ on $G$ and
let $R_{uv}(G)$ be the radius of convergence of the series $\sum_{n=1}^\infty p_{uv}(n)z^n$.
The following results was first proved by Vere-Jones \cite{VJ}.

\begin{lem}\label{R:VJ}
	Let $G$ be a strongly connected graph. Then $R_{uv}(G)$ does not depend on the choice of $u,v$.
\end{lem}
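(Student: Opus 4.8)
The plan is to reduce everything to the Cauchy--Hadamard formula together with a single concatenation-of-paths inequality. Writing $p_{uv}(n)=p_{uv}^G(n)$, recall that the radius of convergence of $\sum_{n\geq 1}p_{uv}(n)z^n$ is
\[
R_{uv}(G)=\frac{1}{\lambda_{uv}},\qquad \lambda_{uv}:=\limsup_{n\to\infty}p_{uv}(n)^{1/n},
\]
with the usual conventions $1/0=\infty$ and $1/\infty=0$. Hence it suffices to prove that the exponential growth rate $\lambda_{uv}$ does not depend on the ordered pair $(u,v)$.

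First I would record the basic inequality coming from gluing paths. Given vertexes $u,v,u',v'$, any path $u'\to u$ of length $s$ followed by a path $u\to v$ of length $n$ followed by a path $v\to v'$ of length $t$ is a path $u'\to v'$ of length $n+s+t$, and distinct triples produce distinct concatenations; therefore
\[
p_{u'v'}(n+s+t)\geq p_{u'u}(s)\,p_{uv}(n)\,p_{vv'}(t).
\]
This is exactly the point at which strong connectivity of $G$ enters: I would choose once and for all $s,t\geq 0$ with $p_{u'u}(s)\geq 1$ and $p_{vv'}(t)\geq 1$ (a connecting path exists, and $s=0$ or $t=0$ is permitted via the empty path). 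Then $p_{u'v'}(n+s+t)\geq p_{uv}(n)$ for every $n\geq 1$.

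The remaining step is a limsup computation. If $\lambda_{uv}=0$ there is nothing to prove, so I would assume $\lambda_{uv}>0$ and pick a subsequence $(n_k)$ with $p_{uv}(n_k)^{1/n_k}\to\lambda_{uv}$ and $p_{uv}(n_k)\geq 1$. Setting $m_k=n_k+s+t$ and taking $m_k$-th roots in the inequality above gives
\[
p_{u'v'}(m_k)^{1/m_k}\geq\Bigl(p_{uv}(n_k)^{1/n_k}\Bigr)^{n_k/m_k},
\]
and since $n_k/m_k\to 1$ the right-hand side tends to $\lambda_{uv}$; consequently $\lambda_{u'v'}\geq\lambda_{uv}$. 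As $(u,v)$ and $(u',v')$ play symmetric roles, interchanging them yields the reverse inequality, so $\lambda_{uv}=\lambda_{u'v'}$ and therefore $R_{uv}(G)=R_{u'v'}(G)$.

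I do not expect a genuine obstacle here: the whole content is the concatenation inequality, which is precisely where strong connectivity is used, and the passage to growth rates is routine Cauchy--Hadamard bookkeeping. The only points requiring a little care are the degenerate cases $\lambda_{uv}\in\{0,\infty\}$ (absorbed by the conventions above) and the fact that the exponent $n_k/m_k$ tends to $1$, which is what lets the fixed shift $s+t$ disappear in the limit.
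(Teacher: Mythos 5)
Your proof is correct, and in fact it is more than the paper provides: the paper does not prove this lemma at all, but simply quotes it from Vere-Jones, so there is no internal argument to compare against. Your route is the standard one underlying the cited result: the concatenation inequality $p_{u'v'}(n+s+t)\geq p_{u'u}(s)\,p_{uv}(n)\,p_{vv'}(t)$ (valid because distinct triples of subpaths yield distinct concatenations), with strong connectivity supplying $s,t$ such that $p_{u'u}(s)\geq 1$ and $p_{vv'}(t)\geq 1$, followed by the Cauchy--Hadamard bookkeeping in which the fixed shift $s+t$ disappears because $n_k/(n_k+s+t)\to 1$. The only point I would spell out slightly more is the case $\lambda_{uv}=\infty$: there the limit computation $a_k^{\theta_k}\to\infty$ still goes through since eventually $a_k>1$ and $\theta_k\geq 1/2$, so $a_k^{\theta_k}\geq a_k^{1/2}\to\infty$; this gives $\lambda_{u'v'}=\infty$ as well, consistent with the convention $R=0$. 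With that remark, your argument is a complete and self-contained proof of the lemma, which the paper leaves to the literature.
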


The unique number $R_{uv}(G)$ provided by Lemma~\ref{R:VJ} will be simply denoted by $R$.
Vere-Jones proved \cite{VJ} that always $R\leq 1$.

Denote by $f_{ww}^G(n)$ the number of indecomposable cycles from $w$ to $w$,
that is paths $u_0\to u_1\to \dotsb\to u_n$ in $G$ with $u_0=u_n=w$ and $u_i\neq w$ for every $0<i<n$.
Let $L_{ww}(G)$ be the radius of convergence of the series $\sum_{n=1}^\infty f_{ww}(n)z^n$.
We present the following result of Salama \cite{Sal} (see also \cite{Rue}).
\begin{lem}\label{lem:Rue}
	If $G$ is null recurrent then $R=L_{ww}$ for all vertexes $w\in V(G)$.
\end{lem}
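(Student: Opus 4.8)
The plan is to fix a vertex $w$ and study the two generating series
\[
P_w(z)=\sum_{n=0}^\infty p^G_{ww}(n)\,z^n
\qquad\text{and}\qquad
F_w(z)=\sum_{n=1}^\infty f^G_{ww}(n)\,z^n,
\]
with the convention $p^G_{ww}(0)=1$. By Lemma~\ref{R:VJ} the radius of convergence of $P_w$ is $R$ (the added constant term is irrelevant), and that of $F_w$ is $L_{ww}$. The first step is the \emph{first-return (renewal) identity}
\[
P_w(z)=\frac{1}{1-F_w(z)},\qquad 0\le z<R,
\]
which follows from the unique decomposition of every cycle $w=u_0\to\dots\to u_m=w$ into a concatenation of indecomposable cycles obtained by cutting at each intermediate return to $w$; summing over all decompositions yields $P_w=\sum_{k\ge0}F_w^{\,k}=(1-F_w)^{-1}$.

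Since every indecomposable cycle is a cycle, $0\le f^G_{ww}(n)\le p^G_{ww}(n)$, so $F_w$ converges whenever $P_w$ does and therefore $L_{ww}\ge R$; it remains only to exclude $L_{ww}>R$. Here I would invoke the Vere-Jones classification of strongly connected graphs (see \cite{Kit}), in which $G$ is \emph{recurrent} exactly when $P_w(R)=\sum_n p^G_{ww}(n)R^n=\infty$, and a recurrent $G$ is \emph{null recurrent} exactly when the mean return time diverges, that is $\sum_n n\,f^G_{ww}(n)R^n=\infty$. Assume for contradiction that $L_{ww}>R$, so that $R$ lies strictly inside the disc of convergence of $F_w$. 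For $0\le z<R$ the finiteness and positivity of $P_w(z)$ force $F_w(z)<1$; letting $z\uparrow R$ and using recurrence ($P_w(z)\to\infty$) together with the renewal identity gives $F_w(R)=\lim_{z\uparrow R}F_w(z)=1$. But $R<L_{ww}$ means $F_w$ is analytic at $R$, so term-by-term differentiation is legitimate and $F_w'(R)=\sum_n n\,f^G_{ww}(n)R^{n-1}<\infty$, whence $\sum_n n\,f^G_{ww}(n)R^n=R\,F_w'(R)<\infty$ (recall $R>0$). This contradicts null recurrence, so $L_{ww}=R$; as $w$ was arbitrary, the equality holds at every vertex.

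The routine contradiction above is not where I expect the real work to lie. The main obstacle is rather to set up the combinatorial framework carefully: one must check that each $p^G_{ww}(n)$ and $f^G_{ww}(n)$ is finite despite $G$ having vertices of infinite out-degree, that $R\in(0,1]$ so that the classification applies, and that the cutting procedure really is a bijection between cycles and ordered tuples of indecomposable cycles (so that the renewal identity holds with the correct counting). Once these bookkeeping points and the precise meaning of \emph{null recurrent} in this combinatorial setting are pinned down, the analytic step closing the proof is immediate.
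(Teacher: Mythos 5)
The paper never proves Lemma~\ref{lem:Rue}: it is quoted directly from Salama \cite{Sal} (see also \cite{Rue}), so there is no internal proof to compare against, and supplying one is genuinely useful. Your argument is correct, and it is in essence the argument found in those references: the renewal identity $P_w=(1-F_w)^{-1}$ on $[0,R)$ (legitimate by Tonelli, since cutting a cycle at its successive returns to $w$ is a bijection onto ordered tuples of indecomposable cycles), the inequality $f^G_{ww}(n)\le p^G_{ww}(n)$ giving $L_{ww}\ge R$, and then the contradiction: if $L_{ww}>R$, recurrence gives $\lim_{z\uparrow R}P_w(z)=P_w(R)=\infty$ (monotone convergence), hence $F_w(R)=1$, while analyticity of $F_w$ at $R$ gives $\sum_n n f^G_{ww}(n)R^n<\infty$, contradicting null recurrence. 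The only real difference from the cited proofs is cosmetic: Salama and Ruette run the analyticity step through Pringsheim's theorem ($z=R$ is a singularity of the nonnegative series $P_w$, so $1/(1-F_w)$ cannot be analytic there unless $F_w(R)=1$), which yields the stronger statement that $R<L_{ww}$ forces positive recurrence with finite mean return time, without assuming recurrence in advance; you instead use the recurrence hypothesis directly, which is all that is needed here. Your deferred bookkeeping points are genuine but unproblematic: the Vere-Jones classification you invoke (recurrent iff $P_w(R)=\infty$; null recurrent iff in addition $\sum_n n f^G_{ww}(n)R^n=\infty$, a vertex-independent property of strongly connected graphs) is exactly the terminology the paper itself uses when it verifies null recurrence of its graph $G$; it presupposes all $p^G_{ww}(n)$ finite, which holds for the paper's $G$ despite the infinite out-degree of $u$ because there are only finitely many indecomposable cycles of each length; and $R>0$ is automatic under null recurrence, since $R=0$ would give $P_w(R)=1<\infty$ and hence transience.
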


By our construction
we have $f_{uu}^G(n)=a(n)$ for $n<P$ and $f_{uu}^G(n)=a(n)+1$ for $n\geq P$.
The radius of convergence of the series
$\sum_{n=1}^\infty f_{uu}^G(n)z^n$ is $L_{uu}=1/2$.
Furthermore
\begin{align*}
\sum_{n=1}^\infty f_{u,u}^G(n) L^n&= \frac{1}{2}-2^{-P+1}+\sum_{n=2}^\infty  2^{2^n-n}
\cdot 2^{-2^n}+\sum_{n=P}^\infty 2^{-n}=1,\\
	\sum_{n=1}^\infty n f_{u,u}^G(n) L^n&\geq  \sum_{n=P}^\infty  2^n\cdot 2^{2^n-n}\cdot 2^{-2^n}=\sum_{n=P}^\infty 1=+\infty.
\end{align*}
Using terminology of Vere-Jones \cite{VJ}, it means that $G$ is a null-recurrent graph.
By Lemma~\ref{lem:Rue} we obtain that $R=1/2$.

Recall that \emph{Gurevich entropy} of a graph $G$ is the number
\[
    h(G)=\sup \{\htop(\sigma|_{\Gamma_H}) : H \text{ is a finite subgraph of }G\}.
\]
The following result of Gurevich \cite{Gur}
will allow us to calculate entropy of $(\overline{\Gamma_G},\sigma)$ (see also \cite{Rue}).

\begin{thm}
Let $G$ be a strongly connected graph with $R>0$. Then
\begin{align*}
    \htop(\overline{\Gamma_G},\sigma)&=h(G)=-\log R\\
	  &=\sup\{h_\mu(\sigma) : \mu \text{ is a }\sigma\text{-invariant measure for }(\Gamma_G,\sigma)\}.
\end{align*}
\end{thm}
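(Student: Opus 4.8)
The plan is to establish the chain of equalities in two stages: first the combinatorial and measure-theoretic identities
$h(G)=-\log R=\sup\{h_\mu(\sigma):\mu\in M_\sigma(\Gamma_G)\}$, which together are Gurevich's variational principle for strongly connected countable Markov shifts, and then the topological identity $\htop(\overline{\Gamma_G},\sigma)=h(G)$, which asserts that the one-point compactification adds no topological entropy.

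For the first stage, recall that $p_{uu}(m+n)\ge p_{uu}(m)\,p_{uu}(n)$, because cycles at $u$ can be concatenated, so $\log p_{uu}$ is superadditive and Fekete's lemma together with the Cauchy--Hadamard formula gives $\sup_n\tfrac1n\log p_{uu}(n)=-\log R$; by Lemma~\ref{R:VJ} this number does not depend on $u$. To identify it with $h(G)$ I would exhaust $G$ by an increasing sequence of finite strongly connected subgraphs $H_1\subset H_2\subset\cdots$ containing $u$, noting that $\htop(\sigma|_{\Gamma_{H_k}})$ is the logarithm of the Perron root of the adjacency matrix of $H_k$; since $p^{H_k}_{uu}(m)\uparrow p^G_{uu}(m)$ as $k\to\infty$, passing to the supremum over $k$ and over $m$ yields $h(G)=\sup_m\tfrac1m\log p^G_{uu}(m)=-\log R$. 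The remaining identity $h(G)=\sup\{h_\mu(\sigma):\mu\in M_\sigma(\Gamma_G)\}$ is Gurevich's theorem \cite{Gur} (see also \cite{Rue}), which I would invoke directly.

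For the second stage, the lower bound is immediate: each $\Gamma_{H_k}$ is a compact $\sigma$-invariant subset of $\overline{\Gamma_G}$, so monotonicity of topological entropy gives $\htop(\overline{\Gamma_G},\sigma)\ge\htop(\sigma|_{\Gamma_{H_k}})$ for all $k$, hence $\htop(\overline{\Gamma_G},\sigma)\ge h(G)$. For the upper bound I would apply the classical variational principle on the compact system $(\overline{\Gamma_G},\sigma)$, obtaining $\htop(\overline{\Gamma_G},\sigma)=\sup\{h_\mu(\sigma):\mu\in M_\sigma(\overline{\Gamma_G})\}$, and then argue that this supremum is not raised by the boundary $Q=\overline{\Gamma_G}\setminus\Gamma_G$. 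Since $\Gamma_G$ and $Q$ are both $\sigma$-invariant, any ergodic $\mu\in M_\sigma(\overline{\Gamma_G})$ satisfies $\mu(\Gamma_G)\in\{0,1\}$: if $\mu(\Gamma_G)=1$ then $h_\mu(\sigma)\le\sup\{h_\nu(\sigma):\nu\in M_\sigma(\Gamma_G)\}=h(G)$, while if $\mu(Q)=1$ then, using that every point of $Q$ is forward asymptotic either to $0^\infty$ or to a point of $\Gamma_G$, Poincaré recurrence forces $\mu=\delta_{0^\infty}$ and hence $h_\mu(\sigma)=0$. Combining the two stages gives all the asserted equalities.

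The main obstacle is the boundary analysis in the upper bound: one must verify precisely that $Q$ carries no $\sigma$-invariant measure other than $\delta_{0^\infty}$. This rests on the explicit description of $\overline{\Gamma_G}$ recorded before the statement, namely that the points of $Q$ are $0^\infty$ together with the eventually-zero sequences $p00\ldots$ and $\ldots00r00\ldots$ (asymptotic to $0^\infty$) and $\ldots00q$ (asymptotic to $\Gamma_G$); none of these is recurrent except $0^\infty$, so an ergodic measure concentrated on $Q$ must be $\delta_{0^\infty}$. An alternative, more self-contained route to the upper bound avoids Gurevich's variational principle by directly counting $(n,\eps)$-separated sets: for fixed $\eps>0$ only finitely many vertices $q_0,\ldots,q_M$ are pairwise more than $\eps$ apart, so an $(n,\eps)$-separated set is controlled by the number of distinct coarse codings in which all vertices $q_j$ with $j>M$ are identified with $0$, and the realizability constraint bounds the exponential growth rate of such codings by that of $\sum_n p_{uu}(n)z^n$, i.e.\ by $-\log R$. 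Either way the delicate point is ensuring that collapsing the infinitely many small vertices clustering near $0$ costs no entropy.
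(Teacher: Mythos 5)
A preliminary remark: the paper does not prove this theorem at all --- it is imported from Gurevich \cite{Gur} (see also \cite{Rue}), so there is no internal proof to compare yours against. Judged on its own terms, your outline follows the natural route, and much of it is sound: the identity $h(G)=-\log R$ via superadditivity of $\log p_{uu}$, Fekete's lemma and exhaustion by finite strongly connected subgraphs containing $u$; the inequality $\htop(\overline{\Gamma_G},\sigma)\geq h(G)$ by monotonicity; and the reduction of the upper bound, through the classical variational principle on the compact space $\overline{\Gamma_G}$, to the claim that $Q=\overline{\Gamma_G}\setminus\Gamma_G$ carries no invariant measure besides $\delta_{0^\infty}$. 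Invoking \cite{Gur} for $h(G)=\sup_{\mu}h_\mu(\sigma)$ does concede a substantial piece of the very statement being proved, but since the paper itself only cites \cite{Gur}, that is defensible.

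The genuine gap is in the boundary analysis: your claim that no point of $Q\setminus\{0^\infty\}$ is recurrent is false. It is correct for the types asymptotic to $0^\infty$, but points of type $\ldots000q$ can be recurrent. Concretely, in the specific graph constructed before the theorem, define finite paths recursively by $B_1=u$ and $B_{k+1}=B_k\,w^{m_k}_1\cdots w^{m_k}_{m_k-1}\,B_k$, choosing $m_k$ so large that every vertex $w^{m_k}_j$ has value less than $4^{-|B_k|-k}$; set $q=\lim_k B_k$ and $y=\ldots000q$. If $N_k$ is the position where the second copy of $B_k$ begins, then $\sigma^{N_k}(y)$ agrees with $y$ on $[0,|B_k|)$ and is preceded by a long block of vertices of value at most $4^{-|B_k|-k}$, which is metrically indistinguishable from the zeros of $y$; one checks $d(\sigma^{N_k}(y),y)\leq 5\cdot 4^{-|B_k|}\to 0$, so $y$ is recurrent. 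Hence Poincar\'e recurrence, as you invoke it, does not dispose of this case. The standard repair needs no recurrence at all: for $i\in\Z$ let $A_i=\{y\in\overline{\Gamma_G}\colon y_j=0 \text{ for all } j<i,\ y_i\neq 0\}$; these Borel sets are pairwise disjoint and satisfy $\sigma^{-1}(A_i)=A_{i+1}$, so any invariant measure gives them all equal, hence zero, mass; arguing symmetrically with the rightmost nonzero coordinate kills the remaining types and yields $\mu(Q\setminus\{0^\infty\})=0$. Two further points need attention. First, the single-block description of $Q$ you quote is established in the paper only for its particular graph; for the general statement you must deduce it from $R>0$: strong connectivity embeds $v\to w$ paths into cycles at $u$, so $R>0$ forces $p_{vw}(n)<\infty$ for all $v,w,n$, while a closure point with two nonzero blocks separated by zeros would require infinitely many distinct paths of one fixed length between two fixed vertices. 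Without this step, $Q$ could a priori contain points with infinitely many nonzero blocks, which have no leftmost or rightmost nonzero coordinate and escape the wandering-set argument. Second, in your alternative separated-set count the asserted bound is not automatic: multiplying path counts block-by-block diverges exponentially, and one must instead use that every realizable coarse coding is witnessed by an actual point of $\overline{\Gamma_G}$, whose segment between its first and last low-index vertex already determines the coding; this bounds the number of codings of a window of length $n$ by roughly $n^2M^2\max_{v,w}p_{vw}(n')$ with $n'\leq n$, giving the growth rate $-\log R$. That is exactly the ``delicate point'' you flagged, and it is the substance of the argument rather than a footnote.
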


This shows that in our construction the Gurevich entropy of $G$ is finite and equals $\log 2$.
Now we can apply another result of Gurevich \cite{Gur} (see also \cite{Kit}).
\begin{thm}\label{thm:Gur}
Let $G$ be a strongly connected graph with positive finite entropy. If $G$ is null recurrent
then $(\overline{\Gamma_G},\sigma)$ does not have measure of maximal entropy.
\end{thm}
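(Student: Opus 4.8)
The plan is to argue by contradiction, exploiting the first-return (inducing) decomposition of the countable Markov shift at a single vertex together with the two hallmarks of null recurrence: the normalization $\sum_n f_{ww}^G(n)R^n=1$ and the divergence $\sum_n n\,f_{ww}^G(n)R^n=\infty$, both of which hold at \emph{every} vertex $w$ by Lemma~\ref{R:VJ} and Lemma~\ref{lem:Rue}. Suppose, for contradiction, that $(\overline{\Gamma_G},\sigma)$ admits a measure of maximal entropy. By the preceding theorem of Gurevich its entropy equals $-\log R=:h$, which is positive by assumption; passing to an ergodic component of maximal entropy and discarding the zero-entropy measures carried by $\overline{\Gamma_G}\setminus\Gamma_G$, I may assume $\mu$ is ergodic, supported on $\Gamma_G$, with $h_\mu(\sigma)=h$. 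Since $\sum_{w\in V(G)}\mu([w])=1$, I fix a vertex $w$ with $\mu([w])>0$.

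First I would induce $\mu$ on the cylinder $[w]$. By Poincar\'e recurrence almost every point of $[w]$ returns, and the induced map $\sigma_w$ is measurably conjugate to the full shift on the countable alphabet $\mathcal L$ of first-return loops at $w$, of which exactly $f_{ww}^G(n)$ have length $n$; here the unique loop decomposition of any path from $w$ to $w$ is what makes the induced system a \emph{full} shift. Writing $\tau$ for the return time (equivalently, the length of the first loop) and $\bar\mu=\mu|_{[w]}/\mu([w])$ for the normalized induced measure, Kac's formula yields $\int\tau\,d\bar\mu=1/\mu([w])<\infty$ and Abramov's formula yields $h=h_\mu(\sigma)=h_{\bar\mu}(\sigma_w)/\!\int\tau\,d\bar\mu$. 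Denoting by $p_\ell$ the one-symbol marginal of $\bar\mu$, the full-shift structure gives $h_{\bar\mu}(\sigma_w)\le H(p):=-\sum_{\ell\in\mathcal L}p_\ell\log p_\ell$.

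The decisive computation is a relative-entropy comparison against the reference vector $q_\ell=R^{|\ell|}$. Recurrence makes $q$ a probability vector, $\sum_\ell q_\ell=\sum_n f_{ww}^G(n)R^n=1$, so that, using $\log q_\ell=-h|\ell|$,
\[
H(p)=-\sum_\ell p_\ell\log\frac{p_\ell}{q_\ell}-\sum_\ell p_\ell\log q_\ell
      =-D(p\,\|\,q)+h\int\tau\,d\bar\mu\le h\int\tau\,d\bar\mu,
\]
where $D(p\,\|\,q)\ge0$ is the Kullback--Leibler divergence. Combining the bounds,
\[
h=\frac{h_{\bar\mu}(\sigma_w)}{\int\tau\,d\bar\mu}\le\frac{H(p)}{\int\tau\,d\bar\mu}\le h,
\]
so equality holds everywhere; in particular $D(p\,\|\,q)=0$, which forces $p_\ell=q_\ell=R^{|\ell|}$ for every loop $\ell$. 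But then
\[
\frac{1}{\mu([w])}=\int\tau\,d\bar\mu=\sum_\ell|\ell|\,p_\ell=\sum_n n\,f_{ww}^G(n)R^n=\infty
\]
by null recurrence, contradicting $\mu([w])>0$. Hence no measure of maximal entropy can exist.

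The step I expect to be the main obstacle is making the inducing machinery fully rigorous in this infinite-alphabet, non-compact setting: identifying the first-return system on $[w]$ with a genuine full shift on $\mathcal L$ (so that the marginal bound $h_{\bar\mu}(\sigma_w)\le H(p)$ is legitimate) and justifying the Abramov and Kac formulae for an ergodic measure that need not be finitely supported. Once the loop decomposition at one vertex is established, the remaining ingredients---the normalization $\sum_\ell q_\ell=1$ from recurrence, the divergence $\sum_n n\,f_{ww}^G(n)R^n=\infty$ from null recurrence, and the vertex-independence of these properties furnished by Lemma~\ref{R:VJ} and Lemma~\ref{lem:Rue}---slot in directly, and the relative-entropy inequality is routine.
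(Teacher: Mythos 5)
Your argument is essentially correct, but there is nothing in the paper to compare it against: Theorem~\ref{thm:Gur} is not proved in the paper at all --- it is quoted as a known result of Gurevich, with pointers to \cite{Gur} and \cite{Kit}. What you have reconstructed is, in substance, the standard proof from that literature: induce a hypothetical ergodic measure of maximal entropy on a cylinder $[w]$, identify the induced system with the full shift over the countable alphabet of first-return loops, and play Abramov's and Kac's formulas against the Gibbs inequality relative to the reference vector $q_\ell=R^{|\ell|}$, which is a probability vector precisely because the graph is recurrent; equality in Gibbs forces $p_\ell=R^{|\ell|}$, whence the expected return time equals $\sum_n n f^G_{ww}(n)R^n=\infty$ by null recurrence, contradicting Kac. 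The individual steps (extracting an ergodic component of maximal entropy, the loop coding being a measurable conjugacy onto a genuine full shift by uniqueness of the first-return decomposition, the marginal bound $h_{\bar\mu}(\sigma_w)\le H(p)$ via the one-coordinate partition, and the equality case of Gibbs) are all sound.

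Two points should be made explicit. First, the reduction to measures supported on $\Gamma_G$ is true but not free for a general strongly connected $G$: one must show that the only ergodic measure charging $\overline{\Gamma_G}\setminus\Gamma_G$ is $\delta_{0^\infty}$, which has zero entropy. This is where the hypothesis $h(G)<\infty$ enters: a point of the closure with a zero coordinate flanked at finite distance on both sides by nonzero coordinates would force $p^G_{ab}(m)=\infty$ for some vertices $a,b$ and some $m$, and then finite strongly connected subgraphs containing $k$ of those connecting paths together with a fixed return path from $b$ to $a$ would have entropy at least $\log k$ divided by a constant, contradicting $h(G)<\infty$; every remaining boundary point other than $0^\infty$ fails either forward or backward recurrence, hence carries no invariant measure. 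This is the general form of the discussion that the paper carries out only for its particular graph. Second, you need the identities $\sum_n f^G_{ww}(n)R^n=1$ and $\sum_n n f^G_{ww}(n)R^n=\infty$ at the particular vertex $w$ with $\mu([w])>0$, which you cannot choose; their vertex-independence is the solidarity theorem of Vere-Jones \cite{VJ}, not literally the content of Lemma~\ref{R:VJ} or Lemma~\ref{lem:Rue} (those give only the vertex-independence of $R$ and the equality $R=L_{ww}$). Both repairs are routine, so the proposal stands as a correct, self-contained proof of the quoted theorem.
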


Now we have enough tools to start the proof.
\begin{proof}[Proof of Theorem~D]
Take as $(X,T)$ the system $(\overline{\Gamma_G},\sigma)$ defined by \eqref{def:VG} and \eqref{def:GG}.
Transitivity follows directly by the construction (the graph $G$ is strongly connected) and
lack of measure of maximal entropy is a consequence of Theorem~\ref{thm:Gur}.
It remains to prove that $(\overline{\Gamma_G},\sigma)$ has the shadowing property.

Denote by $G_n$ the subgraph of $G$ with vertexes
\[
V(G_n)=\{u\}\cup \{v_k^{m,i} : 1\leq m\leq 2^n, 1\leq i \leq a(m), 1\leq k < n\}
\cup \{w_k^m : 1\leq k<m, P\leq m\leq n\}
\]
and all edges between $p,q\in V(G_n)$ contained in $G$. Then $\Gamma_{G_n}$ is a vertex shift,
which is conjugate to a shift of finite type (see \cite{LM}).
Therefore, by the result of Walters \cite{Walters},
each $(\Gamma_{G_n},\sigma)$ has the shadowing property.

Fix any $\eps>0$, and let $K\in\mathbb{N}$ be such that
if $p\in V(G)\setminus V(G_{K-1})$ then $d(p,q_0)=|p|<\eps/8$.
Let $J=\max\{P,K+1\}$ and observe that  $w^n_k\not \in V(G_K)$ if and only if $n\geq J$.
Take an auxiliary symbol $c$ and define a shift of finite type $Z$ in the following way.
Words forbidden for shift $\Gamma_{G_K}$ are forbidden in language of $Z$.
Words $vc$ and $cv$ are forbidden provided that $v\neq u$. Also words $uc^{k-1} u$ are forbidden
when $k<J$. Note that $\Gamma_{G_K}\subset Z$.

Let $\delta>0$ be provided by shadowing of $(Z,\sigma)$ to $\eps/8$.
We also assume that $\delta<\eps/8$ is sufficiently small, so that if $p,q\in V(G_{K})$ and $|p-q|<\delta$
then $p=q$ and $\dist (V(G_K), V(G)\setminus V(G_K))>\delta$.
Take any integer $M>J$ such that $\sum_{|i|\geq M}2^{-i}<\delta/8$.
There is $\gamma>0$ such that
if $x,y\in \overline{\Gamma_G}$ and $d(x,y)<\gamma$ then $|x_i-y_i|<\delta$ for each index $|i|\leq 2M+1$.
Let $(z^i)_{i\in \Z}$ be a $\gamma$-pseudo orbit in $\overline{\Gamma_G}$.
If we fix any $-M-1\leq j\leq M$ and any $i\in \Z$,
then $z^{i+1}_j=z^i_{j+1}$ or $\max\{z^i_{j+1},z^{i+1}_j\}<\eps/8$.

We are going to use points $z^i$ to construct points $y^i$ in $Z$ which form a $\delta$-pseudo orbit for $(Z,\sigma)$.
Fix any $i\in \Z$ and assume that $s<t$ are consecutive indexes such that
$z^i_s=z^i_t=u$, i.e. $z^i_j\neq u$ for $j\in (s,t)$.
If $z^i_j\in V(G_K)$ for some (thus all) $j\in (s,t)$ then we put $y^i_j=z^i_j$ for each $j\in (s,t)$.
If $z^i_j\not\in V(G_k)$ then $t-s>J$ and hence word $u c^{t-s-1} u$ is admissible in $Z$.
Then we put elements of this cycle as $y^i_s\ldots y^i_t=uc^{t-s-1}u$.
If $z^i_s=u$ and $z^i_j\neq u$ for each $j>s$ then
we put
$y_s^i y_{s+1}^{i}\ldots=ucc\ldots$ and note that in that case
$z_j^i<\eps/8$ for all $j>s$.
Similarly,
we put $\ldots y_{s-1}^i y_s^i=\ldots cc u$ when $z_s^i=u$ and $z_j^i\neq u$ for $j<s$.
Finally, if $z_j^i\neq u$ for every $j\in \N$, we put	
$y_j^i=c$ for each $j\in \Z$.

We have to show that the sequence constructed this way is a $\delta$-pseudo orbit.
Note that when $|j|\leq M$ then we either have $y^i_{j+1}= y^{i+1}_j=c$ or
$z^i_{j+1}=y^i_{j+1}= y^{i+1}_j=z^{i+1}_j$ and $z^{i+1}_j,z^{i}_{j+1}\in V(G_K)$.
We obtain that $d(\sigma(y_i), y_{i+1})\leq \sum_{|j|>M}2^{-j}<\delta$
so indeed $(y^i)_{i\in \Z}$ is a $\delta$-pseudo orbit.
Let $x\in Z$ be a point which $\eps/8$-traces the $\delta$-pseudo orbit $(y^i)_{i\in \Z}$.
Note that if $x_n\in V(G_K)$ then $x_n=y^n_0=z^n_0$ and when $x_n=c$ then $z^n_0<\eps/8$.
Replace $x$ by a point $q\in \overline{\Gamma_G}$ in the following way.
If $x_n\in V(G_K)$ then we keep $q_n=x_n$. If $x_{[i,j]}=uc^{k-1} u$ for some $k>0$ and $i<j$
then $k\geq J$ and then we put $q_{[i,j]}=u, w_1^{k},\ldots, w_{k-1}^{k}u$.
If $x_{[0,\infty)}=u c^\infty$ then we put $q_{[0,\infty)}=u 0^\infty$ and similarly we put $q_{(\infty,0]}=0^\infty u$ when $x_{(\infty,0]}=c^\infty u$.
Finally, when $x=\ldots ccc\ldots$ then we put $q=\ldots000\ldots$.

Observe that the point $q$ is $\eps$-tracing the pseudo orbit $(z^i)_{i\in \Z}$ because
on all positions on which $z^i_j\neq y_j^i$ with $|j|< M$ we have $z^i_j<\eps/8$ and so
\begin{align*}
d(\sigma^n(q), z^n)&\leq \sum_{|i|\geq M}4^{-i}|q_{n+i}-z^n_i|+
\sum_{y^n_i\neq c, i\in \Z}4^{-i}|q_{n+i}-z^n_i|+\sum_{y^n_i= c, |i|<M}4^{-i}\frac{\eps}{8}\\
&\leq \frac{\eps}{4}+d(\sigma^n(x),y^n)+\frac{\eps}{3}\leq \frac{2\eps}{3}+\frac{\eps}{8}<\eps.
\end{align*}
This completes the proof.
\end{proof}

\section*{Acknowledgments}
J. Li was supported in part by NSF of China (11401362, 11471125)
and P. Oprocha was partly supported by the project ``LQ1602 IT4Innovations excellence in science'' and AGH local grant.
The authors would like to thank Wen Huang and Dominik Kwietniak for numerous discussions on the topics covered by the paper.

\end{document}